\documentclass[a4paper]{amsart}
\usepackage{amssymb}
\usepackage{verbatim}
\usepackage[UglyObsolete]{diagrams}
\usepackage[dvips]{color}
\usepackage[dvips]{graphicx}
\usepackage{amscd}
\usepackage{mathabx}
\usepackage{arydshln}
\usepackage[color,final]{showkeys}
\definecolor{refkey}{rgb}{1,0,0}
\definecolor{labelkey}{rgb}{0,0,1}
\usepackage{lscape}
\sloppy
\allowdisplaybreaks
\theoremstyle{plain}
  \newtheorem{thm}{Theorem}[section]
  
  \newtheorem{cor}[thm]{Corollary}
  \newtheorem{prop}[thm]{Proposition}
  \newtheorem{conj}[thm]{Conjecture}

  \newtheorem*{obs*}{Observation}
\theoremstyle{definition}

\theoremstyle{remark}
  \newtheorem{rem}[thm]{Remark}
  
  \newtheorem*{ack}{Acknowledgments}
\newcommand{\Z}{\mathbb{Z}}
\newcommand{\C}{\mathbb{C}}

\newcommand{\Vol}{\operatorname{Vol}}
\newcommand{\CS}{\operatorname{CS}}

\newcommand{\Int}{\operatorname{Int}}

\newcommand{\SL}{\rm{SL}}
\renewcommand{\sl}{\mathfrak{sl}}

\newcommand{\Tor}{\operatorname{Tor}}
\newcommand{\Ad}[1]{\operatorname{Ad}_{#1}}
\newcommand{\ad}[1]{\operatorname{ad}_{#1}}

\newcommand{\diag}{\operatorname{diag}}

\newcommand{\pattern}{D}

\renewcommand{\Im}{\operatorname{Im}}

\newcommand{\Ker}{\operatorname{Ker}}
\numberwithin{equation}{section}
\hyphenation{Hi-kami Mura-kami Yoko-ta}
\allowdisplaybreaks
\begin{document}
\title{The twisted Reidemeister torsion of an iterated torus knot}
\author{Hitoshi Murakami}
\address{
Graduate School of Information Sciences,
Tohoku University,
Aramaki-aza-Aoba 6-3-09, Aoba-ku,
Sendai 980-8579, Japan
}
\email{starshea@tky3.3web.ne.jp}
\date{\today}
\begin{abstract}
We calculate the twisted Reidemeister torsion of the complement of an iterated torus knot associated with a representation of its fundamental group to the complex special linear group of degree two.
We also show that the twisted Reidemeister torsions associated with various representations appear in the asymptotic expansion of the colored Jones polynomial.
\end{abstract}
\dedicatory{Dedicated to Professor~Taizo~Kanenobu, Professor~Yasutaka~Nakanishi, and Professor~Makoto~Sakuma for their 60th birthdays}
\keywords{knot, torus knot, iterated torus knot, volume conjecture, colored Jones polynomial, Chern--Simons invariant, Reidemeister torsion}
\subjclass[2000]{Primary 57M27 57M25 57M50}
\thanks{This work was supported by JSPS KAKENHI Grant Number 26400079}
\maketitle
\section{Introduction}
Let $J_N(K;q)$ be the colored Jones polynomial of a knot $K$ in the three-sphere $S^3$ associated with the $N$-dimensional irreducible representation of the Lie algebra $\sl(2;\C)$ \cite{Jones:BULAM385,Kirillov/Reshetikhin:1989}.
We normalize $J_N(K;q)$ so that it is $1$ when $K$ is the unknot.
We also let $\Delta(K;t)$ be the Alexander polynomial $K$ that is normalized so that $\Delta(K;1)=1$ and $\Delta(K;t^{-1})=\Delta(K;t)$ \cite{Alexander:TRAAM28}.
\par
The following conjecture would give a topological interpretation of the colored Jones polynomial.
\begin{conj}[Volume Conjecture \cite{Kashaev:MODPLA95,Kashaev:LETMP97,Murakami/Murakami:ACTAM12001}]
Let $K$ be a knot in $S^3$.
Then we have
\begin{equation*}
  \lim_{N\to\infty}\frac{\log\left|J_N(K;\exp(2\pi\sqrt{-1}/N))\right|}{N}
  =
  \frac{\Vol(S^3\setminus{K})}{2\pi},
\end{equation*}
where $\Vol$ means the simplicial volume \rm{(}or the Gromov norm \cite{Gromov:INSHE82}\rm{)}.
Note that it coincides with the sum of hyperbolic volumes of the hyperbolic pieces in the Jacd--Shalen--Johannson decomposition \rm{(}\cite{Jaco/Shalen:MEMAM79,Johannson:1979}\rm{)} of the knot complement up to constant multiplication.
\end{conj}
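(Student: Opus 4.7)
The plan is to reduce the asymptotics of $J_N(K;\exp(2\pi\sqrt{-1}/N))$ to a stationary-phase analysis of an integral whose critical values encode the complex volume of the pieces of $S^3\setminus K$. First I would convert the colored Jones polynomial at the $N$-th root of unity into a finite state sum of $R$-matrix weights, equivalently into the Kashaev invariant $\langle K\rangle_N$ via the equivalence of Murakami--Murakami. Using Faddeev's quantum dilogarithm $\Phi_b$ with $b^2=1/N$, each summand can be written as the value of a meromorphic function, and the finite sum can be rewritten as a contour integral by Poisson summation.

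In the limit $N\to\infty$ the integrand takes the form $\exp\bigl(N\cdot S(z)/(2\pi\sqrt{-1})\bigr)$ for a classical action $S(z)$ assembled from ordinary dilogarithms. I would then apply the saddle-point method: the critical equations $\partial S/\partial z_i=0$ are (after routine manipulations) the gluing and completeness equations of an ideal triangulation of $S^3\setminus K$, so the critical points are parametrized by $\SL(2;\C)$-representations of $\pi_1(S^3\setminus K)$. The value of $S$ at the geometric (discrete faithful) critical point is the complex volume $\CS(S^3\setminus K)+\sqrt{-1}\Vol(S^3\setminus K)$ by the Bloch--Wigner formula for an ideal tetrahedron, so taking logarithms and absolute values of the saddle-point contribution yields $\Vol(S^3\setminus K)/(2\pi)$, as required.

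For knots whose complement is not hyperbolic I would appeal to the JSJ decomposition: each hyperbolic piece contributes its own hyperbolic volume by the above argument, while Seifert-fibered pieces have vanishing simplicial volume and should contribute only polynomially in $N$; for torus and iterated torus knots this last point can be verified directly from the Rosso--Jones formula, which is in fact the viewpoint taken up in the remainder of the present paper. The main obstacle is the rigorous stationary-phase step: one must justify deforming the integration contour through the geometric saddle without crossing poles of $\Phi_b$ that would alter the leading behavior, and one must rule out competing critical points -- in particular Galois conjugates of the geometric representation -- whose absolute values exceed that of the geometric one. Establishing this dominance of the geometric saddle, together with uniform analytic control of the quantum dilogarithm near its poles, is exactly the step that is known only in isolated families of examples, and it is the reason the Volume Conjecture as stated remains open in general.
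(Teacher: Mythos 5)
The statement you are asked to prove is the Volume Conjecture, which is stated in the paper as an open \emph{conjecture}: the paper supplies no proof of it, and indeed no proof is known for general knots. Your proposal is the standard heuristic programme (Kashaev invariant via the Murakami--Murakami identification, quantum dilogarithm, Poisson summation, saddle-point analysis with critical equations matching the gluing equations of an ideal triangulation, Bloch--Wigner evaluation of the action at the geometric solution), and it is a fair description of why the conjecture is believed; but as you yourself concede in the final sentences, the decisive steps are missing. Concretely: (i) the deformation of the contour onto a steepest-descent path through the geometric saddle without picking up contributions from poles of the quantum dilogarithm has not been justified except for a handful of knots; (ii) the dominance of the geometric critical point over other critical points (non-geometric and Galois-conjugate representations, and the trivial/abelian contribution) is precisely what fails to be automatic -- for non-hyperbolic pieces the ``subleading'' abelian term can even dominate, which is why the conjecture uses $|J_N|$ and the simplicial volume; and (iii) the reduction to the JSJ decomposition, where Seifert-fibered pieces are claimed to contribute only polynomially, is not established in general (the present paper's computations for iterated torus knots are consistent with it but concern the finer asymptotic expansion of Conjectures~\ref{conj:refined_VC} and \ref{conj:parameter_VC}, not a proof of the limit formula for arbitrary satellites). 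So what you have written is an outline of a research programme, not a proof; the gap you name at the end is not a technicality to be filled in but the entire open content of the conjecture.

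For comparison with what the paper actually does: the paper takes the conjecture as motivation only, and its contribution is to verify, for the specific family $T(2,2a+1)^{(2,2b+1)}$, that the coefficients $\tau_i$ appearing in the known asymptotic expansion \eqref{eq:asymptotic} of the colored Jones polynomial are inverse squares of twisted Reidemeister torsions, computed via the Mayer--Vietoris formula \eqref{eq:MV_torsion} for the JSJ decomposition $E=C\cup_S D$. No claim about the limit in the Volume Conjecture itself is proved there.
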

\par
We can complexify the conjecture as follows.
\begin{conj}[Complexification of the volume conjecture \cite{Murakami/Murakami/Okamoto/Takata/Yokota:EXPMA02}]
\label{conj:complex_VC}
Let $K$ be a hyperbolic knot and let $\CS(S^3\setminus{K})$ be the $SO(3)$ Chern--Simons invariant associated with the holonomy representation of $\pi_1(S^3\setminus{K})\to\SL(2;\C)$.
Then we have
\begin{equation*}
  \lim_{N\to\infty}\frac{\log J_N(K;\exp(2\pi\sqrt{-1}/N))}{N}
  =
  \frac{\Vol(S^3\setminus{K})+\sqrt{-1}\CS(S^3\setminus{K})}{2\pi}.
\end{equation*}
\end{conj}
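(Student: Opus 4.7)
The plan is to follow the state-sum-plus-saddle-point strategy that has established the volume conjecture in specific cases, adapted to track the imaginary (Chern--Simons) part as well as the real (volume) part.

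First, I would represent $J_N(K;\exp(2\pi\sqrt{-1}/N))$ as a finite sum of the form $\sum_{\mathbf{k}} e^{N\,\Phi(\mathbf{k}/N)}\cdot(\text{lower order})$, where $\Phi$ is built from the dilogarithm $\Li_2$ (equivalently, from the quantum dilogarithm in Kashaev's $R$-matrix formulation), with the sum indexed by colorings of a diagram of $K$. The Euler--Maclaurin / Poisson passage from a discrete sum to a contour integral converts this into an integral of the shape $\int e^{N\,\Phi(z)}\,dz$ to which classical steepest descent applies.

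Second, I would analyze the critical points of $\Phi$. The expected picture is that these critical points correspond to solutions of a system of gluing-type equations for an ideal triangulation of $S^3\setminus K$, and hence parametrize (a subset of) boundary-parabolic $\SL(2;\C)$ representations of $\pi_1(S^3\setminus K)$. The \emph{geometric} critical point --- corresponding to the holonomy representation of the complete hyperbolic structure --- should give critical value $\frac{\Vol(S^3\setminus K)+\sqrt{-1}\CS(S^3\setminus K)}{2\pi}$, via the Bloch--Wigner and Neumann-type formulas relating $\Li_2$ to hyperbolic volume and Chern--Simons invariants. Third, I would justify the saddle point approximation: deform the summation/integration contour so that it passes through the geometric critical point along a steepest descent direction, and control contributions from the competing critical points and from the boundary of the deformed contour.

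The main obstacle --- the reason Conjecture~\ref{conj:complex_VC} remains open in full generality --- is precisely this last step. Carrying out the contour deformation rigorously, and showing that the geometric saddle strictly dominates all other $\SL(2;\C)$ critical points at the level of $\Re\Phi$, requires case-by-case analysis and is not known uniformly. Additionally, pinning down the Chern--Simons phase (not just the volume) demands careful bookkeeping of branches of $\log$ and $\Li_2$ throughout the asymptotic expansion, which makes the complexified statement genuinely harder than its absolute-value counterpart; for the iterated torus knots treated in this paper one expects the Jacd--Shalen--Johannson decomposition to let each hyperbolic piece contribute additively, with the torus-knot pieces contributing only through the Chern--Simons term.
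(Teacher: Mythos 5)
This statement is a \emph{conjecture} in the paper, not a theorem: the paper offers no proof of Conjecture~\ref{conj:complex_VC}, and it remains open except for isolated examples established elsewhere (the figure-eight knot and, in refined form, the $5_2$ knot). Your text is therefore not comparable to a proof in the paper, and, more importantly, it is not a proof at all --- it is an outline of the standard Kashaev-type strategy (state sum built from quantum dilogarithms, passage to an integral, saddle-point analysis at the geometric critical point), and you yourself concede that the decisive step is missing. Concretely, the gap is the third step: one must rigorously deform the summation contour through the geometric saddle, show that $\Re\Phi$ at that saddle strictly dominates the contributions of all other critical points and of the contour boundary, and control the branch choices of $\log$ and $\Li_2$ so that the imaginary part of the critical value really is $\CS(S^3\setminus K)$. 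None of this is known uniformly in $K$, so the argument cannot be completed as written.

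One further inaccuracy: Conjecture~\ref{conj:complex_VC} is stated only for hyperbolic knots, whereas the knots $T(2,2a+1)^{(2,2b+1)}$ studied in this paper are iterated torus knots, hence not hyperbolic; their JSJ decomposition contains no hyperbolic pieces at all, so your closing remark about ``each hyperbolic piece contributing additively'' does not apply to them. What the paper actually does for these knots is unconditional: it establishes the asymptotic expansion \eqref{eq:asymptotic} of the colored Jones polynomial and identifies the coefficients $\tau_i$ with twisted Reidemeister torsions of the complement, which is a theorem in the spirit of Conjectures~\ref{conj:refined_VC} and \ref{conj:parameter_VC} rather than a proof of Conjecture~\ref{conj:complex_VC}.
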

Here a knot $K$ is called hyperbolic if its complement $S^3\setminus{K}$ has a complete hyperbolic structure with finite volume.
Note that this structure is induced from the holonomy representation.
\par
We can generalize the complexified volume conjecture as follows (\cite{Gukov:COMMP2005,Gukov/Murakami:FIC2008,Dimofte/Garoufalidis:GEOTO2013}).
\begin{conj}\label{conj:refined_VC}
If  $K$ is hyperbolic, then the following asymptotic equivalence holds:
\begin{equation*}
\begin{split}
  J_N(K;\exp(2\pi\sqrt{-1}/N))
  \underset{N\to\infty}{\sim}&
  2\pi^{3/2}\Tor(K)^{-1/2}\left(\frac{N}{2\pi\sqrt{-1}}\right)^{3/2}
  \\
  &\times
  \exp\left(\frac{N}{2\pi}\left(\Vol(S^3\setminus{K})+\sqrt{-1}\CS(S^3\setminus{K})\right)\right),
\end{split}
\end{equation*}
where $\Tor(K)$ is the homological Reidemeister torsion of $S^3\setminus\Int{N(K)}$ twisted by the adjoint action of the holonomy representation associated with the meridian.
Here $f(N)\underset{N\to\infty}{\sim}g(N)$ means that $f(N)=\bigl(1+o(1)\bigr)g(N)$ for $N\to\infty$, and $N(k)$ is the regular neighborhood of $K$ in $S^3$ and $\Int$ denotes the interior.
\end{conj}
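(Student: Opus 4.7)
The plan is to attack Conjecture~\ref{conj:refined_VC} by combining a combinatorial presentation of the colored Jones polynomial with a rigorous saddle-point analysis, and then to identify the resulting Gaussian prefactor with the adjoint Reidemeister torsion. Concretely, I would start from a state-sum formula for $J_N(K;\exp(2\pi\sqrt{-1}/N))$, either the Kashaev--R-matrix presentation (via a diagram of $K$) or a shaped triangulation of $S^3\setminus K$ producing a sum/integral of quantum dilogarithm factors. The first step is to rewrite this as a contour integral of the form
\begin{equation*}
  J_N(K;\exp(2\pi\sqrt{-1}/N))
  =
  \int_{\Gamma} \exp\!\left(\tfrac{N}{2\pi\sqrt{-1}}\,S(\mathbf{z})\right)\,A(\mathbf{z})\,d\mathbf{z}
  \bigl(1+O(1/N)\bigr),
\end{equation*}
where $S(\mathbf{z})$ is a potential built from classical dilogarithms and $A(\mathbf{z})$ carries the subleading contribution.

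Next I would locate the critical points of $S$ and match them with the gluing/holonomy equations of the triangulation. The content here is that the saddle corresponding to the complete hyperbolic structure gives $S(\mathbf{z}_0)=\sqrt{-1}\bigl(\Vol(S^3\setminus{K})+\sqrt{-1}\CS(S^3\setminus{K})\bigr)$ modulo $\pi^2\Z$, by Neumann's formula relating the dilogarithmic sum to the complex volume. This identifies the exponential rate with the right-hand side of the conjecture. One then deforms $\Gamma$ to a steepest-descent contour passing through $\mathbf{z}_0$ and applies the stationary phase expansion, producing a factor $(2\pi)^{d/2}\bigl(\det\mathrm{Hess}\,S(\mathbf{z}_0)\bigr)^{-1/2}\,A(\mathbf{z}_0)\cdot(N/(2\pi\sqrt{-1}))^{-d/2}$ whose powers of $N$ and $2\pi$ should accumulate to $(N/(2\pi\sqrt{-1}))^{3/2}$ after cancellation of the internal variables against the residual dimensions coming from the cusp.

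The decisive step is then to identify
\begin{equation*}
  (2\pi)^{d/2}\bigl(\det\mathrm{Hess}\,S(\mathbf{z}_0)\bigr)^{-1/2}\,A(\mathbf{z}_0)
  =
  2\pi^{3/2}\Tor(K)^{-1/2},
\end{equation*}
that is, to recognize the Hessian determinant of the potential at the geometric saddle as (up to a universal constant depending on the cusp normalization) the adjoint torsion with respect to the meridian. The cleanest way to do this is through Porti's formula expressing $\Tor(K)$ as a ratio of derivatives of gluing equations, together with the fact that the Hessian of $S$ computes exactly this Jacobian with respect to shape parameters; alternatively one can invoke the Andersen--Hansen style argument using the symplectic structure on the character variety.

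The main obstacle, and the reason the conjecture remains open in general, is the rigorous control of the saddle-point expansion: one must (a) guarantee that $\Gamma$ can be deformed to a steepest-descent contour that avoids the poles and cuts of the quantum dilogarithm, (b) show that no competing saddle dominates or contributes at the same exponential order, and (c) handle the boundary behaviour of the integrand uniformly in $N$. A secondary, but still substantive, difficulty is pinning down the precise normalization constants (powers of $2\pi$, $\sqrt{-1}$, and signs of square roots) so that the torsion appears exactly with the meridian normalization stipulated in the statement, rather than a longitudinal variant. In the context of the present paper on iterated torus knots, one bypasses (a)--(c) because the colored Jones polynomial admits an explicit closed form, and the role of the proof sketch above is therefore mainly to motivate the torsion identification that will be carried out by direct computation.
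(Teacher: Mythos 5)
This statement is a conjecture, and the paper does not prove it: it records that Conjecture~\ref{conj:refined_VC} is known only for the figure-eight knot (Andersen--Hansen) and for the $5_2$ knot (Ohtsuki), and the knots actually treated in the paper --- iterated torus knots --- are non-hyperbolic, so they do not even satisfy the hypothesis. There is therefore no proof in the paper against which to measure your argument, and your proposal does not supply one either. What you have written is the standard heuristic (potential function from a state sum or quantum-dilogarithm presentation, identification of the geometric saddle with the complex volume via Neumann's formula, identification of the Hessian with the adjoint torsion via Porti's change-of-curve formula), and you yourself concede that the decisive analytic steps --- deforming to a steepest-descent contour through the geometric critical point, excluding competing saddles of equal or larger exponential growth, and uniform control of the integrand --- are precisely what is missing. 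Those are not technical footnotes; they are the entire content of the conjecture, and no general method for carrying them out is currently known. So the proposal has a genuine gap: it is a plan, not a proof.

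One further caution on the part you treat as ``decisive'': the identification of $(\det\mathrm{Hess}\,S(\mathbf{z}_0))^{-1/2}A(\mathbf{z}_0)$ with $2\pi^{3/2}\Tor(K)^{-1/2}$ up to the stated powers of $N$ and $2\pi\sqrt{-1}$ is itself delicate, because $\Tor(K)$ here is normalized with respect to the meridian, and passing from the Hessian in shape parameters to the torsion requires both the Jacobian of the gluing equations and a change-of-basis factor on the one-dimensional twisted homology of the boundary torus; getting the normalization (and the residual $N^{3/2}$ power, which reflects the one remaining boundary direction) exactly right is where the verified cases required case-specific computation. The present paper sidesteps all of this by working with knots for which $J_N$ has a closed form, expanding it directly, and then verifying \emph{a posteriori} that the coefficients $\tau_i$ are twisted Reidemeister torsions computed by Mayer--Vietoris from the JSJ decomposition $E=C\cup_S D$ --- a route that is unavailable for a general hyperbolic knot.
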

This conjecture is proved for the figure-eight knot by J.~Andersen and S.~Hansen \cite{Andersen/Hansen:JKNOT2006} (see also \cite[Theorem~1.3]{Murakami:JTOP2013}), and for the $5_2$ knot by T.~Ohtsuki \cite{Ohtsuki:ki52}.
See also \cite{Ohtsuki/Takata:GEOTO2015} for related topics.
\par
In \cite{Gukov/Murakami:FIC2008} the following conjecture was proposed for a further generalization:
\begin{conj}\label{conj:parameter_VC}
Suppose that $K$ is a hyperbolic knot.
Then we have
\begin{multline*}
  J_N\left(K;\exp\bigl(2\pi\sqrt{-1}+u\bigr)/N\right)
  \\
  \underset{N\to\infty}{\sim}
  \frac{\sqrt{-\pi}}{2\sinh(u/2)}\Tor(K;u)^{-1/2}
  \left(\frac{N}{2\pi\sqrt{-1}+u}\right)^{1/2}
  \exp\left(\frac{N}{2\pi\sqrt{-1}+u}S(u)\right)
\end{multline*}
if $|u|\ne0$ is small enough, where $\Tor(K;u)$ is the homological Reidemeister torsion of $S^3\setminus\Int{N(K)}$ twisted by the adjoint action of the representation $\rho_{u}$ parametrized by $u$, associated with the meridian $\mu$, and $S(u)$ determines the $\SL(2;\C)$ Chern--Simons invariant of $\rho_{u}$.
Note that $\rho_{0}$ is the holonomy representation.
\end{conj}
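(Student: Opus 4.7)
\textbf{Proof plan for Conjecture \ref{conj:parameter_VC}.}
My plan is to follow the saddle point strategy that has succeeded in the proved cases (figure-eight, $5_2$), adapting each step so that the deformation parameter $u$ is carried through. First, I would seek a multi-sum or multi-integral presentation of $J_N\bigl(K;\exp((2\pi\sqrt{-1}+u)/N)\bigr)$ coming from a state-sum for the colored Jones polynomial (for instance, the Kashaev--Yokota $R$-matrix decomposition of a diagram, or the Habiro cyclotomic expansion re-summed by Kirillov's quantum dilogarithm identity). The key point is that each local Boltzmann weight should have a semiclassical limit governed by the Rogers dilogarithm, so that after setting $\xi=2\pi\sqrt{-1}+u$ and $N\to\infty$ the sum/integral takes the shape $\int \exp\bigl((N/\xi)\Phi(z;u)\bigr)\,\omega(z;u,N)\,dz$, with $\Phi$ an explicit potential in a few complex variables and $\omega$ a smooth prefactor.

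Second, I would analyze the critical points of $\Phi(\,\cdot\,;u)$. The critical equations should match the gluing equations of a triangulation of $S^3\setminus K$, so that the relevant saddle $z_\ast(u)$ is parametrized by the one-parameter deformation $\rho_u$ of the holonomy representation (with meridional eigenvalue $\exp(u/2)$). Writing $S(u)=\Phi(z_\ast(u);u)$, the Neumann--Zagier formalism identifies $S(u)$ with the complexified volume of $\rho_u$ up to the standard boundary correction, which is exactly what the conjecture requires for the leading exponential $\exp\bigl(NS(u)/(2\pi\sqrt{-1}+u)\bigr)$. One then deforms the original contour through $z_\ast(u)$ via a steepest-descent path; for $|u|$ small this deformation is a small perturbation of the $u=0$ case and the contributions from other critical points remain exponentially smaller.

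Third, the one-loop contribution comes from the Gaussian integral around $z_\ast(u)$. The resulting prefactor is proportional to $\bigl(\det\Phi''(z_\ast;u)\bigr)^{-1/2}$ times the measure factor $\omega(z_\ast;u,N)$, which produces the power $(N/(2\pi\sqrt{-1}+u))^{1/2}$. To match it with $\Tor(K;u)^{-1/2}/(2\sinh(u/2))$, I would invoke the Dubois--Porti formula expressing the adjoint Reidemeister torsion (with respect to the meridian) as a ratio of a subdeterminant of the Neumann--Zagier Jacobian of the gluing equations and a boundary factor involving $\sinh(u/2)$; this matches the combinatorics of the Hessian $\Phi''$ in our saddle point after elementary row/column manipulations. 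The factor $\sqrt{-\pi}/2$ is then a bookkeeping constant tracking the Gaussian normalization and the choice of branch for the square root along the steepest-descent direction.

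The hard part, and the step where all known proofs get stuck beyond single examples, is justifying the saddle point analysis rigorously: one needs to produce an explicit contour deformation with uniform estimates, control all non-dominant critical points (including boundary/degenerate contributions coming from $q$-Pochhammer poles), and verify that the remaining smaller-order saddles are indeed exponentially subdominant for all small $u\neq 0$. Equally delicate is showing that the deformed potential $\Phi(\,\cdot\,;u)$ really does encode $S(u)$ and $\Tor(K;u)$ in the geometric sense used in the statement, rather than in some algebraic surrogate; this requires comparing the Neumann--Zagier datum of the chosen triangulation with the intrinsic objects of the conjecture, i.e.\ checking that the construction is independent of triangulation. I would attempt this first on a restricted family—for example, two-bridge hyperbolic knots with an explicit Yokota-type diagram—where both the integrand and the character variety are tractable, and then try to patch classes of knots together along JSJ pieces, which is the natural bridge to the iterated torus knot setting of this paper.
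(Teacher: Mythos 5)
The statement you are addressing is an open conjecture (Conjecture~\ref{conj:parameter_VC}, due to Gukov and the author), not a theorem of this paper: the paper offers no proof of it, only cites that it has been verified for the figure-eight knot with $u$ real, and then goes on to study the \emph{non-hyperbolic} iterated torus knots, for which it verifies the Reidemeister-torsion part of an analogous (already established) asymptotic expansion by entirely different, algebraic means --- a Mayer--Vietoris decomposition of the knot exterior along the JSJ torus and explicit chain-level computations of the twisted torsions of each piece. So there is no ``paper's own proof'' to compare against, and your text is not a proof either: it is a programme.

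The genuine gap is the one you yourself identify in your fourth paragraph, and it is not a technicality but the entire content of the conjecture. You assume the existence of an integral presentation $\int\exp\bigl((N/\xi)\Phi(z;u)\bigr)\omega\,dz$ whose critical points realize the deformed representations $\rho_u$, whose critical value is $S(u)$, and whose Hessian determinant reproduces $\Tor(K;u)$ via a Dubois--Porti/Neumann--Zagier identity; none of these is established for a general hyperbolic knot, and the rigorous contour deformation with uniform control of all non-dominant saddles and $q$-Pochhammer singularities has resisted proof beyond a handful of examples. A correct write-up would have to either restrict to a class of knots where each step can be carried out (as Ohtsuki does for $5_2$) or supply new ideas for the contour-deformation and triangulation-independence issues; as it stands, every step that is actually hard is deferred. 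If your goal is to connect to this paper, note that its method --- computing $\Tor(E;\rho)$ directly from a presentation of $\pi_1$ and the multiplicativity of torsion under gluing, then matching the answer against coefficients $\tau_i$ already extracted from a known asymptotic expansion --- sidesteps the saddle-point machinery entirely, but only because the asymptotic expansion for these non-hyperbolic knots was obtained independently.
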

This conjecture is proved in \cite{Murakami:JPJGT2007} for the case where $K$ is the figure-eight knot and $u$ is real.
See also \cite{Murakami/Yokota:JREIA2007,Gukov:COMMP2005,Dimofte/Gukov/Lenells/Zagier:CNTP2010,Dimofte/Gukov:CS,Zagier:2010}.
\par
A knot is called simple if every incompressible torus in its complement is boundary parallel.
Any hyperbolic knot is known to be simple.
If a knot is simple and non-hyperbolic, then it is a torus knot.
For coprime positive integers $c$ and $d$, let $T(c,d)$ be the torus knot of type $(c,d)$.
Related to Conjecture~\ref{conj:refined_VC}, R.~Kashaev and O.~Tirkkonen proved the following asymptotic expansion \cite{Kashaev/Tirkkonen:ZAPNS2000}.
\begin{thm}
\begin{equation*}
\begin{split}
  &J_N\bigl(T(c,d);\exp(2\pi\sqrt{-1}/N)\bigr)
  \\
  =&
  \left(
    \frac{\pi^{3/2}}{2cd}
    \left(\frac{N}{2\pi\sqrt{-1}}\right)^{3/2}
    \sum_{k=1}^{cd-1}
    (-1)^{k+1}k^2
    \tau(k)
    \exp\left(S(2\pi\sqrt{-1};k)\frac{N}{2\pi\sqrt{-1}}\right)
  \right.
  \\
  &\qquad
  \left.
    +
    \frac{1}{4}
    \sum_{j=1}^{\infty}
    \frac{a_j}{j!}
    \left(\frac{2\pi\sqrt{-1}}{4cdN}\right)^{j-1}
  \right),
\end{split}
\end{equation*}
where
\begin{align*}
  S(\xi;k)
  &:=
  \frac{-(2k\pi\sqrt{-1}-cd\xi)^2}{4cd},
  \\
  &\intertext{and}
  \\
  \tau(k)
  &:=
  (-1)^{k+1}
  \frac{4\sin(k\pi/c)\sin(k\pi/d)}{\sqrt{cd}},
\end{align*}
and $a_j$ is the $2j$-th derivative of $2z\sinh{z}/\Delta\bigl(T(c,d);e^{2z}\bigr)$ at $z=0$ with $\Delta(K;t)$ the normalized Alexander polynomial of a knot $K$.
\end{thm}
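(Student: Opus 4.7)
The plan is to start from Morton's explicit Rosso--Jones formula for the colored Jones polynomial of the torus knot $T(c,d)$, which expresses $J_N(T(c,d);q)$ as a difference of two finite sums of $q$-monomials with quadratic exponents in the summation variable, divided by $q^{1/2}-q^{-1/2}$. Substituting $q=\exp(2\pi\sqrt{-1}/N)$ turns the numerator into a finite exponential sum of the form $\sum_{n=-(N-1)/2}^{(N-1)/2}\bigl(e^{2\pi\sqrt{-1}(-cdn^{2}+(c+d)n)/N}-e^{2\pi\sqrt{-1}(-cdn^{2}+(c-d)n)/N}\bigr)$ (up to a fixed phase), while the denominator contributes the factor $2\sqrt{-1}\sin(\pi/N)\sim 2\pi\sqrt{-1}/N$. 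The problem is thereby reduced to the large-$N$ analysis of two quadratic Gauss sums of length $N$.

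To evaluate the Gauss sums asymptotically, the next step is to apply Poisson summation (equivalently, quadratic Gauss-sum reciprocity) to rewrite each sum over $n$ as a dual sum over residues $k$ modulo $2cd$. The points $k=1,\dots,cd-1$ produce genuine stationary-phase contributions: each is a Gaussian whose saddle-point evaluation yields a factor $\sqrt{N/(2\pi\sqrt{-1})}$ and the exponential $\exp\bigl(S(2\pi\sqrt{-1};k)\cdot N/(2\pi\sqrt{-1})\bigr)$ with $S(\xi;k)=-(2k\pi\sqrt{-1}-cd\xi)^{2}/(4cd)$. Combined with the overall $N/(2\pi\sqrt{-1})$ coming from $1/(q^{1/2}-q^{-1/2})$, this reproduces the prefactor $(N/(2\pi\sqrt{-1}))^{3/2}$ appearing in the theorem. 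The trigonometric weights $\tau(k)$ arise from subtracting the two shifts $(c+d)n$ and $(c-d)n$, which telescope into the product $\sin(k\pi/c)\sin(k\pi/d)$ after standard product-to-sum identities, and the factor $(-1)^{k+1}k^{2}$ is then forced by matching the dual-variable Jacobian.

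The remaining term, coming from the singular residues $k=0$ and $k=cd$ at which the two shifted exponents collide, is not of Gaussian type but is instead a smooth integral whose integrand vanishes to first order at the origin. After a suitable change of variables the integrand is identified as the analytic continuation to a neighbourhood of $z=0$ of $\dfrac{2z\sinh z}{\Delta\bigl(T(c,d);e^{2z}\bigr)}$. Its Taylor expansion contains only even powers of $z$ by virtue of $\Delta(K;t)=\Delta(K;t^{-1})$, and this produces the series $\sum_{j\ge 1}(a_{j}/j!)\bigl(2\pi\sqrt{-1}/(4cdN)\bigr)^{j-1}$. The identification rests on the classical formula $\Delta(T(c,d);t)=(t^{cd}-1)(t-1)/\bigl((t^{c}-1)(t^{d}-1)\bigr)$, which matches exactly the four hyperbolic-sine factors left over from the Gauss-sum evaluation.

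In my view the main difficulty is this last identification of the non-Gaussian remainder with the Alexander-polynomial generating function. The Gaussian bookkeeping at the $cd-1$ saddles is routine once the Poisson transform is set up, but recognising the leftover smooth integrand as exactly $2z\sinh z/\Delta(T(c,d);e^{2z})$ requires a careful rewriting of Morton's sum. One clean route is to realise the original finite sum as a residue (or period) of a meromorphic function on a cylinder, deform the integration contour across the $cd-1$ simple poles to collect the explicit saddle contributions, and then recognise the remaining smooth contour integral as generated by the Alexander polynomial. The precise tracking of signs, square-root branches, and the $\tau(k)$-phases is the most error-prone part of the argument.
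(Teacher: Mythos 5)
First, a point of comparison: the paper does not prove this theorem at all --- it is quoted verbatim from Kashaev and Tirkkonen as background, so there is no internal proof to measure your argument against. Judged on its own terms, your outline follows the standard architecture (Rosso--Jones/Morton formula, then a reciprocity or residue analysis that splits the answer into $cd-1$ Gaussian contributions plus a smooth remainder governed by $2z\sinh z/\Delta\bigl(T(c,d);e^{2z}\bigr)=2z\sinh(cz)\sinh(dz)/\sinh(cdz)$), and that architecture is sound.

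There is, however, a concrete gap at the very first reduction. Morton's formula for the \emph{normalized} colored Jones polynomial has denominator $q^{N/2}-q^{-N/2}$, not $q^{1/2}-q^{-1/2}$; at $q=\exp(2\pi\sqrt{-1}/N)$ this denominator equals $e^{\pi\sqrt{-1}}-e^{-\pi\sqrt{-1}}=0$, and the numerator sum vanishes as well. Your proposal instead takes the denominator to be $2\sqrt{-1}\sin(\pi/N)\sim 2\pi\sqrt{-1}/N$, which is the situation of the generic-$\xi$ theorem of Hikami--Murakami quoted just below (there the corresponding factor is $2\sinh(\xi/2)\ne 0$, the expansion is of order $N^{1/2}$, and no $k^{2}$ appears). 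The value $\xi=2\pi\sqrt{-1}$ is exactly the degenerate limit where $\sinh(\xi/2)\to 0$, and resolving the resulting $0/0$ by a second-order expansion is what produces the extra power of $N$ (hence $N^{3/2}$ rather than $N^{1/2}$), the weight $k^{2}$ on each saddle, and the extra factor $2z$ multiplying $\sinh(cz)\sinh(dz)/\sinh(cdz)$ in the smooth remainder (which is why that function vanishes to second order at $z=0$ and the series starts at $j=1$). Your explanation that ``$(-1)^{k+1}k^{2}$ is forced by matching the dual-variable Jacobian'' cannot be correct: in a Poisson-summation or Gauss-reciprocity computation the Jacobian of the dual variable is a constant independent of $k$, so it can never manufacture a $k$-dependent weight. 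Until the degeneration at the root of unity is treated explicitly, the proposal does not establish the stated formula.
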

In \cite{Dubois/Kashaev:MATHA2007} J.~Dubois and Kashaev proved that $S(2\pi\sqrt{-1};k)$ is the Chern--Simons invariant and $\tau_k^{-2}$ is the homological twisted Reidemeister torsion both of which are associated with a representation $\rho_{k}:\pi_1(S^3\setminus{T(c,d)})\to\SL(2;\C)$ parametrized by an integer $k$ ($0<k<cd$) (see Theorem~\ref{thm:Hikami/Murakami_intro}).
\par
As for Conjecture~\ref{conj:parameter_VC}, K.~Hikami and the author proved the following theorem in \cite{Hikami/Murakami:Bonn}.
\begin{thm}[\cite{Hikami/Murakami:Bonn}]\label{thm:Hikami/Murakami_intro}
Let $\xi$ be a complex number with non-zero real part and non-negative imaginary part.
Then we have the following equality as $N\to\infty$:
\begin{multline*}
  J_{N}\bigl(T(c,d);\exp(\xi/N)\bigr)
  \\
  =
  \frac{1}{\Delta\bigl(T(c,d);\exp\xi\bigr)}
  +
  \frac{\sqrt{-\pi}}{2\sinh(\xi/2)}\sqrt{\frac{N}{\xi}}
  \sum_{k}\tau(k)\exp\left(\frac{N}{\xi}S(\xi;k)\right)
  +o(1).
\end{multline*}
Moreover $\tau(k)^{-2}$ is the homological Reidemeister torsion of $S^3\setminus\Int{N(T(c,d))}$ twisted by the adjoint action of an irreducible representation $\rho_k\colon\pi_1(S^3\setminus\Int{N(T(c,d))})\to\SL(2;\C)$ associated with the meridian $\mu$, and $S(\xi;k)$ is related to the $\SL(2;\C)$ Chern--Simons invariant of $S^3\setminus\Int{N(T(c,d))}$ associated with $\rho_k$, where $N(K)$ denotes the regular neighborhood of $K$ in $S^3$ and $\Int$ denotes the interior, $\SL(2;\C)$ is the set of two by two complex matrices with determinants one.
\end{thm}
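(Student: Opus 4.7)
The plan is to begin with an explicit formula for the colored Jones polynomial of a torus knot $T(c,d)$ (due to Rosso--Jones, equivalently to Morton), which expresses $J_N(T(c,d);q)$ as a finite sum of Gaussian-type exponentials in a summation variable ranging over roughly $-(N-1)/2$ to $(N-1)/2$, divided by the quantum denominator $q^{N/2}-q^{-N/2}$. Setting $q=\exp(\xi/N)$ converts the problem into a concrete asymptotic analysis.

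The first step is to isolate the Alexander polynomial contribution. By rewriting the Rosso--Jones sum (extending the range of summation and subtracting the correction), one identifies a piece that telescopes to yield precisely $1/\Delta\bigl(T(c,d);\exp\xi\bigr)$; the hypothesis $\Re\xi\ne 0$ ensures that $\Delta\bigl(T(c,d);e^\xi\bigr)$ is nonzero so this term is well-defined. What remains is a discrete sum of oscillatory Gaussian exponentials indexed by an integer variable.

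Next, I would apply either Poisson summation or a direct saddle-point analysis to this remaining sum. The critical points are half-integer zeros of the derivative of the quadratic exponent and are parametrized by integers $k$ with $0<k<cd$, matching one-to-one with the irreducible $\SL(2;\C)$-representations $\rho_k$ of $\pi_1(S^3\setminus T(c,d))$. Gaussian integration around each saddle produces the prefactor $\sqrt{-\pi}/(2\sinh(\xi/2))\cdot\sqrt{N/\xi}$, the amplitudes $\tau(k)$, and the phases $\exp\bigl(NS(\xi;k)/\xi\bigr)$, with the remaining error $o(1)$ uniformly as $N\to\infty$.

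For the geometric identification, I would use the Seifert presentation $\langle x,y\mid x^c=y^d\rangle$ of $\pi_1(S^3\setminus T(c,d))$: the representations $\rho_k$ are realized by sending $x$ and $y$ to diagonal matrices whose eigenvalues depend on $k$ and on the meridian holonomy determined by $\xi$. A Fox-calculus computation of the adjoint twisted Reidemeister torsion along $\mu$ then yields $\tau(k)^{-2}$, and a direct computation of the $\SL(2;\C)$ Chern--Simons invariant from the holonomies of $\mu$ and the longitude gives $S(\xi;k)$; in the special case $\xi=2\pi\sqrt{-1}$ these identifications recover the Dubois--Kashaev results cited above.

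The main obstacle is the saddle-point/Poisson-summation step for general complex $\xi$: one must justify deforming to the correct steepest-descent contours and bound the tails uniformly in $N$, especially when $\Im\xi=0$ so that the summand is oscillatory rather than exponentially damped on the real axis. The hypotheses $\Re\xi\ne 0$ and $\Im\xi\ge 0$ are precisely what make this contour shift possible while keeping the Gaussian integrals absolutely convergent as $N\to\infty$.
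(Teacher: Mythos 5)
This theorem is not proved in the present paper: it is quoted verbatim from \cite{Hikami/Murakami:Bonn} (with the torsion identification going back to Dubois and Dubois--Kashaev), so there is no in-paper argument to compare against. Measured against the proof in those cited sources, your outline follows essentially the same strategy: start from the Rosso--Jones/Morton formula, split off the abelian contribution $1/\Delta\bigl(T(c,d);e^{\xi}\bigr)$, and extract the sum over $k$ by a stationary-phase type argument. The one genuine difference is that \cite{Hikami/Murakami:Bonn} does not use Poisson summation on the discrete sum; they pass to an integral representation of the colored Jones polynomial and shift the contour, so that the $\tau(k)\exp\bigl(NS(\xi;k)/\xi\bigr)$ terms arise as residues at poles of the integrand rather than as saddle-point contributions, and the residual integral is what is estimated to be $1/\Delta+o(1)$. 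Your route is viable in principle but leaves the hardest step --- uniform control of the tails and of the contour deformation for $\Im\xi=0$ --- explicitly unresolved, whereas the residue method makes that estimate comparatively mechanical; since you only name this obstacle without overcoming it, the analytic half of your proposal is an honest plan rather than a proof.

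Two smaller points. First, in the geometric identification you say the irreducible representations $\rho_k$ send both generators $x,y$ of $\langle x,y\mid x^c=y^d\rangle$ to diagonal matrices; that cannot be literally correct, since two commuting diagonal images would give a reducible representation. What is true is that $\rho_k(x)$ and $\rho_k(y)$ are each separately diagonalizable with eigenvalues $e^{\pm\pi\sqrt{-1}k/c}$-type roots of unity (forced by $x^c=y^d$ being central), but not simultaneously; the Fox-calculus torsion computation must be done with that non-commuting normal form, as in Dubois's work. Second, the relevant $k$ are not all of $0<k<cd$ but those with $c\notdiv k$ and $d\notdiv k$ (so that $\sin(k\pi/c)\sin(k\pi/d)\ne0$), which is what matches the components of the irreducible character variety. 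Neither point is fatal, but both need to be fixed for the representation-theoretic half of the argument to go through.
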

See \cite{Hikami/Murakami:Bonn,Murakami:ACTMV2008} for more details.
\par
For non-negative integers $a$ and $b$, let $T(2,2a+1)^{(2,2b+1)}$ be the $(2,2b+1)$-cable of the torus knot of type $(2,2a+1)$ (see Figure~\ref{fig:iterated_torus_knot}).
\begin{figure}[h!]
  \includegraphics[scale=0.3]{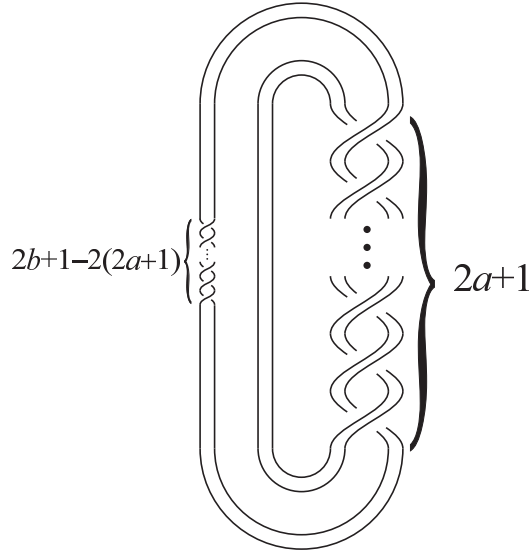}
  \caption{$(2,2b+1)$-cable of the torus knot of type $(2,2a+1)$}
  \label{fig:iterated_torus_knot}
\end{figure}
\begin{rem}
Note that in Figure~\ref{fig:iterated_torus_knot} there are $2b+1-2(2a+1)$ half-twists.
This is because since the torus knot $T(2,2a+1)$ has $2a+1$ crossings, the torus along $T(2,2a+1)$ is given $2(2a+1)$ half-twists.
\end{rem}
We assume that $2b+1>4(2a+1)$.
In the paper \cite{Murakami:ACTMV2014}, the author gave the following equality as $N\to\infty$:
\begin{equation}\label{eq:asymptotic}
\begin{split}
  &J_{N}\left(T(2,2a+1)^{(2,2b+1)};\exp(\xi/N)\right)
  \\
  =&
  \frac{1}{\Delta\left(T(2,2a+1)^{2b+1};\exp\xi\right)}
  \\
  &+
  \frac{\sqrt{-\pi}}{2\sinh(\xi/2)}
  \sqrt{\frac{N}{\xi}}
  \sum_{j}
  \tau_1(\xi;j)
  \exp\left[\frac{N}{\xi}S_1(\xi;j)\right]
  \\
  &+
  \frac{\sqrt{-\pi}}{2\sinh(\xi/2)}
  \sqrt{\frac{N}{\xi}}
  \sum_{k}
  \tau_2(\xi;k)
  \exp\left(\frac{N}{\xi}S_2(\xi;k)\right)
  \\
  &+
  \frac{\pi}{2\sinh(\xi/2)}
  \frac{N}{\xi}
  \sum_{l,m}
  \tau_3(\xi;l,m)
  \exp\left(\frac{N}{\xi}S_3(\xi;l,m)\right)
  +o(1),
\end{split}
\end{equation}
where
\begin{align*}
  \tau_1(\xi;j)
  &:=
  (-1)^j
  \sqrt{\frac{2}{2b+1}}
  \frac{\sin\left(\frac{2(2j+1)\pi}{2b+1}\right)}{\cos\left(\frac{(2j+1)(2a+1)\pi}{2b+1}\right)},
  \\
  S_1(\xi;j)
  &:=
  (2j+1)\xi\pi\sqrt{-1}
  -
  \frac{(2b+1)\xi^2}{2}
  +
  \frac{(2j+1)^2\pi^2}{2(2b+1)},
  \\
  \tau_2(\xi;k)
  &:=
  (-1)^{k+1}
  \sqrt{\frac{2}{2a+1}}
  \frac{\sin\left(\frac{(2k+1)\pi}{2a+1}\right)}{\cosh\left(\frac{(2b+1-4(2a+1))\xi}{2}\right)},
  \\
  S_2(\xi;k)
  &:=
  2(2k+1)\xi\pi\sqrt{-1}
  -
  2(2a+1)\xi^2
  +
  \frac{(2k+1)^2\pi^2}{2(2a+1)},
  \\
  \tau_3(\xi;l,m)
  &:=
  (-1)^{l+m}
  \frac{4}{\sqrt{(2a+1)(2b+1-4(2a+1))}}
  \sin\left(\frac{(2m+1)\pi}{2a+1}\right),
  \\
  S_3(\xi;l,m)
  &:=
  (2l+1)\xi\pi\sqrt{-1}
  -
  \frac{(2b+1)\xi^2}{2}
  \\
  &\hspace{-3mm}+
  \frac{\pi^2\bigl((2l+1)^2(2a+1)+(2m+1)^2(2b+1)-4(2l+1)(2m+1)(2a+1)\bigr)}{2(2a+1)(2b+1-4(2a+1))}.
\end{align*}
See \cite{Murakami:ACTMV2014} for the ranges of the summations.
\par
If we use another normalization $\tilde{J}_{N}(K;q):=J_{N}(K;q)(q^{1/2}-q^{-1/2})$, then we the formula above becomes
\begin{equation}
\begin{split}
  &\tilde{J}_{N}\left(T(2,2a+1)^{(2,2b+1)};\exp(\xi/N)\right)
  \\
  =&
  \tau_0(\xi)
  +
  \sqrt{-\pi}
  \sqrt{\frac{N}{\xi}}
  \sum_{j}
  \tau_1(\xi;j)
  \exp\left(\frac{N}{\xi}S_1(\xi;j)\right)
  \\
  &+
  \sqrt{-\pi}
  \sqrt{\frac{N}{\xi}}
  \sum_{k}
  \tau_2(\xi;k)
  \exp\left(\frac{N}{\xi}S_2(\xi;k)\right)
  \\
  &+
  \pi
  \frac{N}{\xi}
  \sum_{l,m}
  \tau_3(\xi;l,m)
  \exp\left(\frac{N}{\xi}S_3(\xi;l,m)\right)
  +o(1),
\end{split}
\end{equation}
where
\begin{equation*}
  \tau_0(\xi)
  :=
  \frac{2\sinh(\xi/2)}{\Delta(T(2,2a+1)^{2b+1};\exp\xi)}.
\end{equation*}
\par
We studied the $\SL(2;\C)$ Chern--Simons invariants of various representations of $\pi_1(T(2,2a+1)^{(2,2b+1)})$ and concluded that $S_1(\xi;j)$, $S_2(\xi;k)$ and $S_3(\xi;l,m)$ determine these invariants.
We also studied the twisted Reidemeister torsion of those representations and found (non-rigorous) relations to $\tau_1(\xi;j)$, $\tau_2(\xi;k)$ and $\tau_3(\xi;l,m)$.
\par
The purpose of this paper is to prove that $\tau_1(\xi;j)^{-2}$, $\tau_2(\xi;k)^{-2}$ and $\tau_3(\xi;l,m)^{-2}$ are indeed the twisted Reidemeister torsions.
\par
Let $B$ be the solid torus $D^2\times S^1$ and $L$ be the knot in $B$ traveling along the direction of $S^1$ twice with $2b+1$ half-twists as depicted in Figure~\ref{fig:twist_2b+1}.
\begin{figure}[h!]
  \includegraphics[scale=0.3]{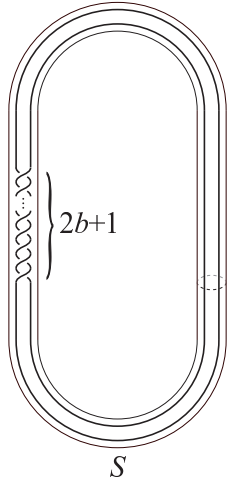}
  \caption{Pattern knot in a solid torus}
  \label{fig:twist_2b+1}
\end{figure}
If $f\colon B\to S^3$ is an embedding so that $f(B)=N(T(2,2a+1))$, then $T(2,2a+1)^{(2;2b+1)}$ is equivalent to the knot $\varphi(L)\subset S^3$.
If we put $E:=S^3\setminus\Int{N(T(2,2a+a)^{(2,2b+1)})}$, $C:=S^3\setminus\Int{N(T(2,2a+1))}$, and $D:=B\setminus\Int{N(L)}$, then $E=C\bigcup_{S}{D}$, where $S=C\cap{D}=\partial{C}=\partial{B}$.
\begin{equation}\label{eq:JSJ}
  \raisebox{-35mm}{\includegraphics[scale=0.3]{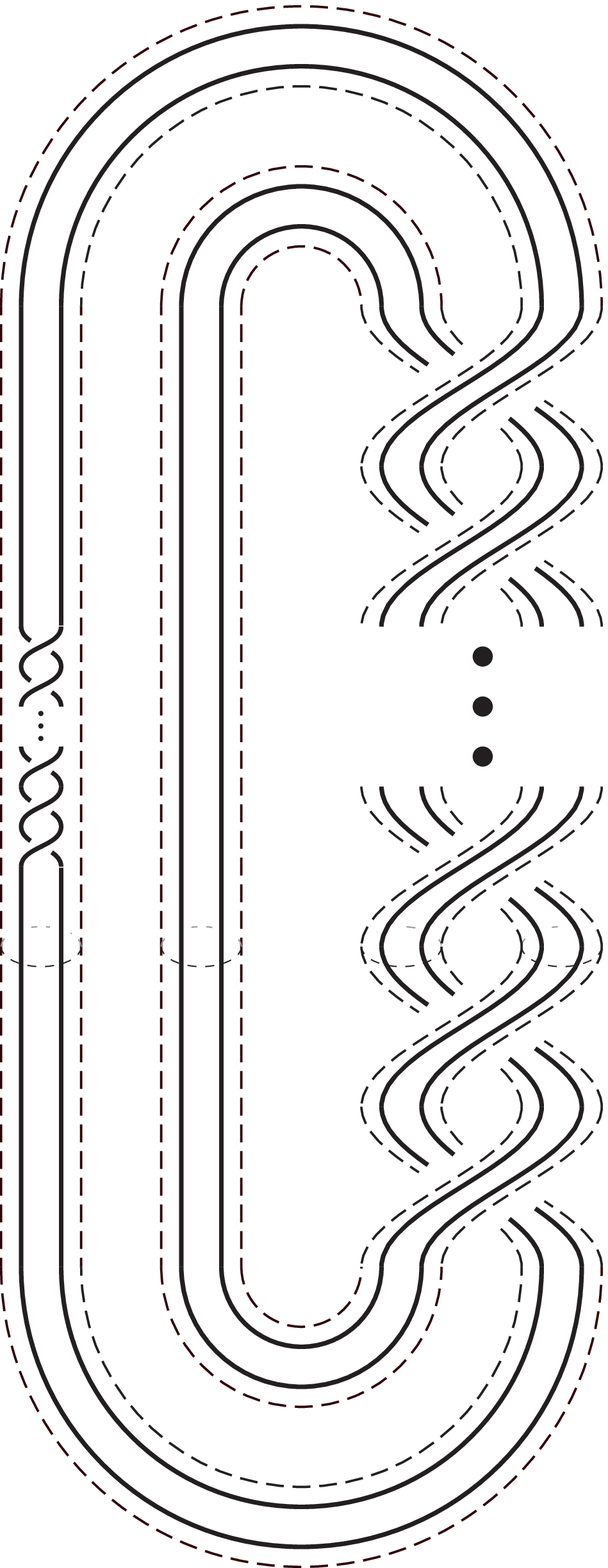}}
  \quad=\quad
  \raisebox{-40mm}{\includegraphics[scale=0.3]{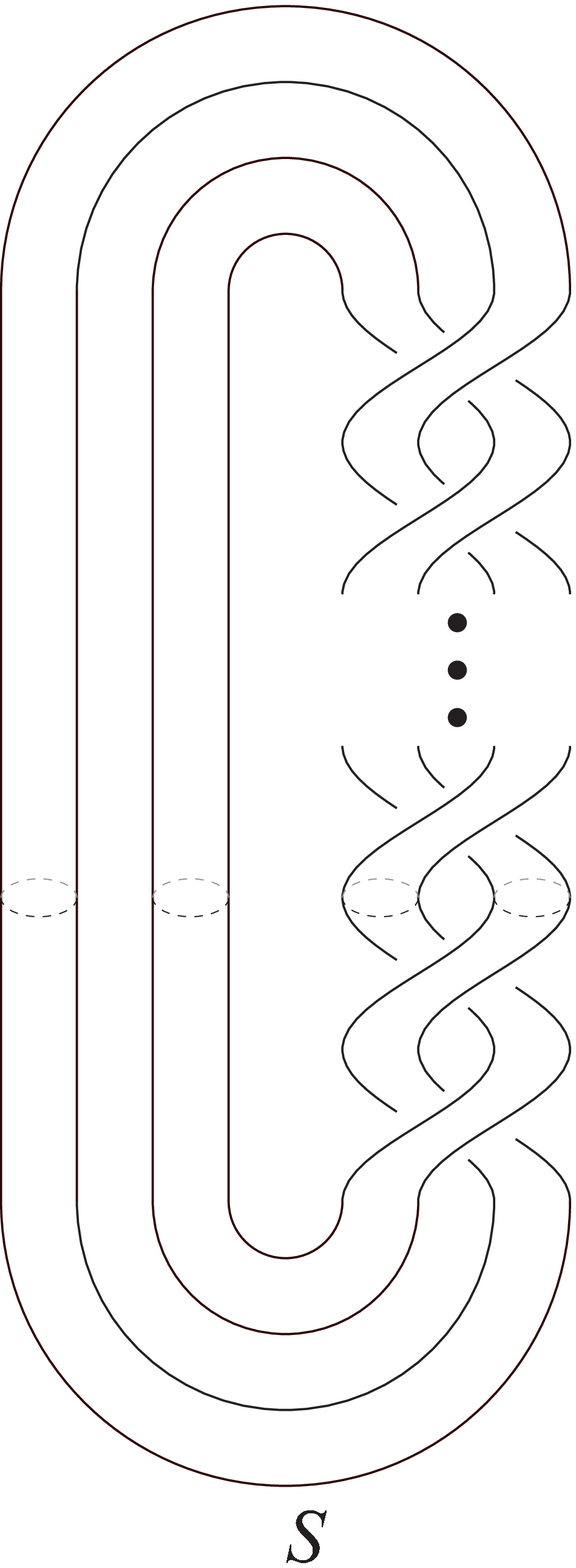}}
  \quad\bigcup\quad
  \raisebox{-33mm}{\includegraphics[scale=0.3]{twist_2b+1.eps}}.
\end{equation}
Put $\Sigma:=\partial{N(T(2,2a+1)^{(2,2b+1)})}$.
\par
Let $\rho^{\rm{AA}}_{\xi}$, $\rho^{\rm{AN}}_{\xi,j}$, $\rho^{\rm{NA}}_{\xi,k}$, and $\rho^{\rm{NN}}_{\xi,l,m}$ be representations of $\pi_1(E,v)$ to $\SL(2;\C)$ defined in \S\ref{sec:rep}, where $v\in{S}$ is the basepoint.
Then we have the following theorem.
\begin{thm}
The quantities $\tau_{0}(\xi)$, $\tau_1(\xi;j)$, $\tau_2(\xi,k)$, and $\tau_3(\xi;l,m)$ given in \eqref{eq:asymptotic} are the homological Reidemeister torsions of $E$ twisted by the adjoint actions of $\rho^{\rm{AA}}_{\xi}$, $\rho^{\rm{AN}}_{\xi,j}$, $\rho^{\rm{NA}}_{\xi,k}$, and $\rho^{\rm{NN}}_{\xi,l,m}$, respectively.
More precisely we have
\begin{align*}
  \tau_{0}(\xi)^{-2}&=\Tor\left(E;\rho^{\rm{AA}}_{\xi}\right),
  \\
  \tau_1(\xi;j)^{-2}&=\Tor\left(E;\rho^{\rm{AN}}_{\xi,j}\right),
  \\
  \tau_2(\xi;k)^{-2}&=\Tor\left(E;\rho^{\rm{NA}}_{\xi,k}\right),
  \\
  \tau_3(\xi;l,m)^{-2}&=\Tor\left(E;\rho^{\rm{NN}}_{\xi,l,m}\right).
\end{align*}
Moreover the torsions above are associated with the following bases of the twisted homologies of $E$:
\begin{itemize}
\item
$\Tor(E;\rho^{\rm{AN}}_{\xi,j})$ is associated with
  \begin{itemize}
  \item
  $\{[\Sigma]\otimes V\}$
  for $H_2(E;\rho^{\rm{AN}}_{\xi,j})=\C$,
  and
  \item
  $\{\mu\otimes V\}$ for $H_1(E;\rho^{\rm{AN}}_{\xi,j})=\C$,
  \end{itemize}
\item
$\Tor(E;\rho^{\rm{NA}}_{\xi,k})$ is associated with
  \begin{itemize}
  \item
  $\{[\Sigma]\otimes W\}$
  for $H_2(E;\rho^{\rm{NA}}_{\xi,k})=\C$,
  and
  \item
  $\{\mu\otimes W\}$ for $H_1(E;\rho^{\rm{NA}}_{\xi,k})=\C$,
  \end{itemize}
\item
$\Tor(E;\rho^{\rm{NN}}_{\xi,l,m})$ is associated with
  \begin{itemize}
  \item
  $\{[S]\otimes\tilde{U},[\Sigma]\otimes\tilde{V}\}$ for $H_2(E;\rho^{\rm{NN}}_{\xi,l,m})=\C^2$,
  and
  \item
  $\{\mu\otimes\tilde{V},\gamma\}$ for $H_1(E;\rho^{\rm{NN}}_{\xi,l,m})=\C^2$.
  \end{itemize}
\end{itemize}
Here
\begin{itemize}
\item
$\mu$ is the meridian of $T(2,2a+1)^{(2,2b+1)}$,
\item
$[S]$ and $[\Sigma]$ are the fundamental class of $H_2(S;\Z)$ and $H_2(\Sigma;\Z)$, respectively,
\item
$V\in\sl(2;\C)$ is invariant under the adjoint action of the image of any element in $\pi_1(\Sigma)$ by $\rho^{\rm{AN}}_{\xi,j}$,
\item
$W\in\sl(2;\C)$ is invariant under the adjoint action of the image of any element in $\pi_1(\Sigma)$ by $\rho^{\rm{NA}}_{\xi,k}$,
\item
$\tilde{U}\in\sl(2;\C)$ is invariant under the adjoint action of the image of any element in $\pi_1(S)$ by $\rho^{\rm{NN}}_{\xi,l,m}$,
\item
$\tilde{V}\in\sl(2;\C)$ is invariant under the adjoint action of the image of any element in $\pi_1(\Sigma)$ by $\rho^{\rm{NN}}_{\xi,l,m}$, and
\item
$\gamma\in H_1(E;\rho^{\rm{NN}}_{\xi,l,m})$ is an element such that $\delta_1(\gamma)=[\tilde{v}\otimes\tilde{U}]\in H_0(S;\rho^{\rm{NN}}_{\xi,l,m})$, where $\delta_1\colon H_1(E;\rho^{\rm{NN}}_{\xi,l,m})\to H_0(S;\rho^{\rm{NN}}_{\xi,l,m})$ is the connecting homomorphism associated with the Mayer--Vietoris exact sequence of the triple $(C,D,S)$ with $S:=C\cap D$, and $\tilde{U}\in\sl(2;\C)$ is invariant under the adjoint action of any element in $\pi_1(S)$.
\end{itemize}
\end{thm}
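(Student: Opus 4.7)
The plan is to exploit the JSJ decomposition $E=C\cup_{S}D$ together with the multiplicativity of the twisted Reidemeister torsion along a Mayer--Vietoris sequence. For each representation $\rho\in\{\rho^{\rm{AA}}_{\xi},\rho^{\rm{AN}}_{\xi,j},\rho^{\rm{NA}}_{\xi,k},\rho^{\rm{NN}}_{\xi,l,m}\}$, the short exact sequence of twisted cellular chain complexes
\[
0\to C_{*}(S;\rho|_{S})\to C_{*}(C;\rho|_{C})\oplus C_{*}(D;\rho|_{D})\to C_{*}(E;\rho)\to 0
\]
yields, via Milnor's formula, an identity
\[
\Tor(E;\rho)=\pm\frac{\Tor(C;\rho|_{C})\,\Tor(D;\rho|_{D})}{\Tor(S;\rho|_{S})}\,\Tor(\mathcal{H}_{*}),
\]
where $\Tor(\mathcal{H}_{*})$ is the torsion of the associated long exact homology sequence computed with respect to chosen bases. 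The whole argument reduces to computing the three building-block torsions with carefully chosen bases for which $\Tor(\mathcal{H}_{*})=1$.

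I would first record those building blocks. The torsion $\Tor(C;\rho|_{C})$ for the torus-knot complement is essentially the content of Theorem~\ref{thm:Hikami/Murakami_intro}: for irreducible $\rho|_{C}$ the Dubois--Kashaev computation yields $\sqrt{2/(2a+1)}\,\sin\bigl((2k+1)\pi/(2a+1)\bigr)$ (up to a boundary-torus factor that will cancel against $\Tor(S;\rho|_{S})$), and for abelian $\rho|_{C}$ it yields an evaluation of $\Delta(T(2,2a+1);\cdot)$. For the pattern complement $D=B\setminus\Int N(L)$, $\pi_{1}(D)$ admits a natural two-generator one-relator presentation coming from the two strands running through $2b+1$ half-twists, so a direct Fox-calculus computation produces a closed expression in the meridional and core-longitudinal eigenvalues of $\rho|_{D}$; the irreducible case will give the factors $\sin\bigl(2(2j+1)\pi/(2b+1)\bigr)$ and $\cosh\bigl((2b+1-4(2a+1))\xi/2\bigr)^{-1}$, while the abelian case will give an evaluation of $\Delta\bigl(T(2,2b+1);\cdot\bigr)$ at the holonomy of the core of $B$. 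Finally, $\Tor(S;\rho|_{S})$ is a standard exterior-algebra computation on $T^{2}$: when $\rho|_{S}$ is abelian with eigenvalues $\lambda_{\mu}$, $\lambda_{\nu}$ along the $\Z^{2}$-generators of $\pi_{1}(S)$, the twisted homology $H_{*}(S;\rho|_{S})$ concentrates on the invariant line in $\sl(2;\C)$ and the torsion reduces to the appropriate product of $(\lambda_{\cdot}-\lambda_{\cdot}^{-1})$ factors.

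With these in hand I would assemble the four cases. For $\rho^{\rm{AA}}_{\xi}$ both restrictions are abelian and the product formula collapses to the Alexander-polynomial torsion of the whole iterated torus knot, reproducing $\tau_{0}(\xi)^{-2}$ by Fox-calculus multiplicativity. For $\rho^{\rm{AN}}_{\xi,j}$ and $\rho^{\rm{NA}}_{\xi,k}$ exactly one restriction is irreducible; the invariant vector $V\in\sl(2;\C)$ (respectively $W\in\sl(2;\C)$) for the abelian side produces the one-dimensional $H_{2}$-contribution paired with $[\Sigma]$ and the one-dimensional $H_{1}$-contribution paired with $\mu$ exactly as in the statement, and inserting the explicit eigenvalues of $\rho$ and cancelling the common torus factor among $\Tor(C)$, $\Tor(D)$ and $\Tor(S)$ reduces the product formula to $\tau_{1}(\xi;j)^{-2}$ and $\tau_{2}(\xi;k)^{-2}$. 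In the $\rm{NN}$ case both restrictions are irreducible and each contributes an invariant vector, namely $\tilde{U}$ invariant under $\pi_{1}(S)$ and $\tilde{V}$ invariant under $\pi_{1}(\Sigma)$, so that $H_{2}$ is enlarged to $\C^{2}$ with basis $\{[S]\otimes\tilde{U},[\Sigma]\otimes\tilde{V}\}$; precisely the role of $\gamma$, determined by $\delta_{1}(\gamma)=[\tilde{v}\otimes\tilde{U}]$, is to complete $\{\mu\otimes\tilde{V}\}$ to a basis of $H_{1}$ in a way compatible with the Mayer--Vietoris connecting map, so that $\Tor(\mathcal{H}_{*})=1$ and the product formula delivers $\tau_{3}(\xi;l,m)^{-2}$.

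The main obstacle will be the basis bookkeeping. Milnor's formula is extremely sensitive to the chosen cellular bases and to the chosen bases on each twisted homology, so a stray scalar or sign can easily hide in a normalisation of $V$, $W$, $\tilde{U}$, $\tilde{V}$ or in a connecting homomorphism. I expect the single hardest step to be the $\rm{NN}$-case verification that the prescribed basis $\{\mu\otimes\tilde{V},\gamma\}$ of $H_{1}(E;\rho^{\rm{NN}}_{\xi,l,m})$ is compatible with $\{[S]\otimes\tilde{U},[\Sigma]\otimes\tilde{V}\}$ on $H_{2}$ and with the natural invariant-vector bases on $H_{*}(S;\rho^{\rm{NN}}_{\xi,l,m})$ to the extent that $\Tor(\mathcal{H}_{*})=1$; this is exactly what forces the normalisation of $\gamma$ through the condition $\delta_{1}(\gamma)=[\tilde{v}\otimes\tilde{U}]$. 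Once this is settled, the remaining equalities are algebraic manipulations of the meridional and longitudinal eigenvalues under each $\rho$, matching the trigonometric and exponential expressions entering the $\tau_{i}(\xi;\cdot)$.
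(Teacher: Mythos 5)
Your overall architecture---decomposing $E=C\cup_{S}D$ along the JSJ torus and applying Porti's multiplicativity of the twisted torsion over the Mayer--Vietoris sequence, then computing $\Tor(S;\rho)$, $\Tor(C;\rho)$ and $\Tor(D;\rho)$ separately with invariant-vector bases---is exactly the paper's. Two of your attributions are also essentially right (the $\cosh$ factor of $\tau_{2}$ does come from $\Tor(D;\rho)$ in the NA case, and the $\cos$ factor of $\tau_{1}$ from the abelian torsion of $C$). A smaller slip: the factor $\sin\bigl(2(2j+1)\pi/(2b+1)\bigr)$ in $\tau_{1}$ does \emph{not} come from $\Tor(D)$, which equals $\pm\tfrac12$ when $\rho_{D}$ is irreducible; it is the $(\omega_{2}^{2}-\omega_{2}^{-2})^{-2}$ denominator of the Alexander-polynomial torsion of $C$ evaluated at the eigenvalue $\omega_{2}$ of $\rho(x)$.

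The genuine gap is your claim that the bases can be arranged so that $\Tor(\mathcal{H}_{\ast})=1$. The homology bases of $E$ are prescribed by the statement, and the bases of $H_{\ast}(S)$, $H_{\ast}(C)$, $H_{\ast}(D)$ are the invariant-vector bases already used in the building-block computations, so $\Tor(\mathcal{H}_{\ast})$ is not at your disposal: it must be computed, and it is not $1$. With these bases one finds $\Tor(\mathcal{H}_{\ast})=\pm 1/(2b+1)$ in the AN case, $\pm 1$ in the NA case, and $\pm 1/\bigl((2b+1)-4(2a+1)\bigr)$ in the NN case, and these nontrivial values are precisely what supply the combinatorial factors $2b+1$ and $2b+1-4(2a+1)$ appearing in $\tau_{1}(\xi;j)^{-2}$ and $\tau_{3}(\xi;l,m)^{-2}$; a plan that expects those factors to emerge from $\Tor(C)$, $\Tor(D)$, $\Tor(S)$ alone comes out wrong by exactly these amounts. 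The missing work is the Fox-calculus evaluation of $\varphi_{1}=i^{C}_{\ast}\oplus i^{D}_{\ast}$ on $H_{1}(S;\rho)$: one must show, for instance, that $i^{D}_{\ast}(\tilde{\lambda}_{C}\otimes U)=(2b+1)\,\tilde{t}\otimes U$, $i^{D}_{\ast}(\tilde{\mu}_{C}\otimes U)=-2\,\tilde{t}\otimes U$ and $i^{C}_{\ast}(\tilde{\lambda}_{C}\otimes U)=-2(2a+1)\,\tilde{x}\otimes U$, and then take the determinant of the resulting basis-change matrix in the relevant degree of the long exact sequence. In the NN case the normalisation $\delta_{1}(\gamma)=\tilde{v}\otimes\tilde{U}$ does trivialise the contribution of the bottom of the sequence, but it does not make that determinant equal to $1$. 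This computation is the heart of the proof and cannot be assumed away.
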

\begin{ack}
Part of this work was done when the author was visiting the Max Planck Institute for Mathematics and Universit{\'e} Paris Diderot.
The author thanks them for their hospitality.
\par
I owe much of the calculations in this paper to Mathematica \cite{Mathematica}.
\end{ack}
\section{Definition of the Reidemeister torsion}\label{sec:definition}
Let $M$ be a compact manifold and $\pi_1(M)$ be its fundamental group with basepoint $v$.
Let $\rho\colon\pi_1(M)\to\SL(2;\C)$ be a representation.
We will recall the definition of the Reidemeister torsion $\Tor(M;\rho)$ of $M$ associated with $\rho$.
See \cite{Dubois:CANMB2006,Porti:MAMCAU1997} for details.
\par
If we let $\tilde{M}$ be the universal cover of $M$, then $\pi_1(M)$ acts on $\tilde{M}$ as the deck transformation.
The adjoint action $\Ad{\rho(g)}$ of $\rho(g)$ ($g\in\pi_1(M)$ on $\sl(2;\C)$ is defined by $\Ad{\rho(g)}(x):=\rho(g)^{-1}x\rho(g)\in\sl(2;\C)$ ($x\in\sl(2;\C)$).
Put $C_i(M;\rho):=C_i(\tilde{M})\otimes_{\Z[\pi_1(M)]}\sl(2;\C)$, where $\pi_1(M)$ acts on $C_i(\tilde{M})$ by the covering transformation and on $\sl(2l\C)$ by the adjoint action.
We denote by $H_i(M;\rho)$ the homology of the chain complex $\cdots\to C_{i+1}(M;\rho)\xrightarrow{\partial_{i+1}}C_i(M;\rho)\xrightarrow{\partial_i}C_{i-1}(M;\rho)\to\cdots.$
\par
Put
\begin{itemize}
\item $C_{i}:=C_{i}(M;\rho)$,
\item $B_{i}:=\Im\partial_{i+1}$,
\item $Z_{i}:=\Ker\partial_{i}$,
\item $H_{i}:=H_{i}(M;\rho)$.
\end{itemize}
Note that $H_{i}=Z_{i}/B_{i}$.
Let
\begin{itemize}
\item
$\mathbf{c}_{i}:=\{c_{i,1},c_{i,2},\dots,c_{i,l_i}\}$ be a basis of $C_{i}$,
\item
$\mathbf{b}_{i}:=\{b_{i,1},b_{i,2},\dots,b_{i,m_i}\}$ be a set of vectors such that $\{\partial_{i}(b_{i,1}),\partial_{i}(b_{i,2}),\dots,\partial_{i}(b_{i,m_{i}})\}$ forms a basis of $B_{i-1}$,
\item
$\mathbf{h}_i:=\{h_{i,1},h_{i,2},\dots,h_{i,n_i}\}$ be a basis of $H_{i}$, and
\item
$\tilde{\mathbf{h}}_{i}:=\{\tilde{h}_{i,1},\tilde{h}_{i,2},\dots,\tilde{h}_{i,n_i}\}$ be a set of vectors such that $\tilde{h}_{i,k}$ is a lift of $h_{i,k}$ in $Z_{i}$.
\end{itemize}
Then it can be easily seen that $\partial_{i+1}(\mathbf{b}_{i+1})\cup\tilde{\mathbf{h}}_{i}\cup\mathbf{b}_{i}$ forms a basis of $C_{i}$.
\par
Denote by $[\mathbf{u}\mid\mathbf{v}]$ the determinant of the basis change matrix from $\mathbf{u}$ to $\mathbf{v}$ for two bases $\mathbf{u}$ and $\mathbf{v}$ of a vector space.
Put
\begin{equation}\label{eq:Reidemeister_def}
  \operatorname{Tor}(C_{\ast},\mathbf{c}_{\ast},\mathbf{h}_{\ast})
  :=
  \prod_{i}
  \left[
    \partial_{i+1}(\mathbf{b}_{i+1})\cup\tilde{\mathbf{h}}_{i}\cup\mathbf{b}_{i}\Bigm|\mathbf{c}_{i}
  \right]^{(-1)^{i+1}}
  \in\C^{\ast}.
\end{equation}
Note that this does not depend on the choices of $\mathbf{b}_{i}$ nor the choices of lifts of $\mathbf{h}_{i}$ (see for example \cite{Turaev:2001}).
\par
Now we consider the case where $M$ is a knot complement $S^3\setminus\Int{N(K)}$.
\par
Let $\langle x_1,x_2,\dots,x_n\mid r_1,r_2,\dots,r_{n-1}\rangle$ be a Wirtinger presentation of $\Pi:=\pi_1(M)$.
Since the Reidemeister torsion depends only on the simple homotopy type (see \cite[p.~10, Remarque (b)]{Porti:MAMCAU1997},\cite{Milnor:BULAM11966}) and the Whitehead group of a knot exterior is trivial \cite{Waldhausen:ANNMA21978}, we can calculate the torsion regarding $M$ as a CW-complex $\mathcal{K}$ with one $0$-cell $v$, $n$ $1$-cells $x_1,x_2,\dots,x_{n}$, and $(n-1)$ $2$-cells $f_1,f_2,\dots,f_{n-1}$.
Here the $2$-cell $f_j$ is attached to the bouquet $v\cup\left(\bigcup_{i=1}^{n}x_i\right)$ so that its boundary $\partial f_j$ presents the word $r_j$.
\par
Since our CW-complex is $2$-dimensional the Reidemeister torsion becomes
\begin{equation*}
  \frac{
  \left[
    \partial_{2}(\mathbf{b}_{2})\cup\tilde{\mathbf{h}}_{1}\cup\mathbf{b}_{1}\Bigm|\mathbf{c}_{1}
  \right]}
  {
  \left[
    \partial_{1}(\mathbf{b}_{1})\cup\tilde{\mathbf{h}}_{0}\Bigm|\mathbf{c}_{0}
  \right]
  \left[
    \tilde{\mathbf{h}}_{2}\cup\mathbf{b}_{2}\Bigm|\mathbf{c}_{2}
  \right]}.
\end{equation*}
See Figure~\ref{fig:chain_complex}.
\begin{figure}[h]
  \includegraphics[scale=0.3]{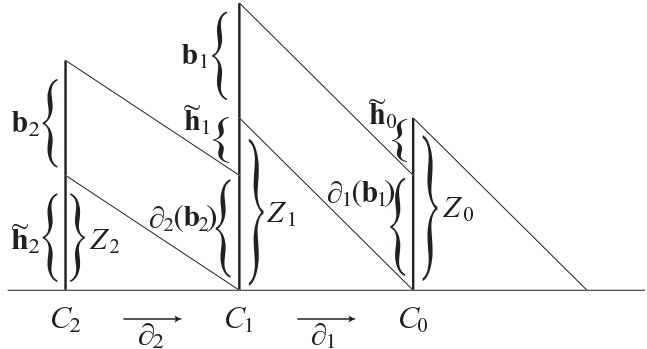}
  \caption{Chain complex and its basis}
  \label{fig:chain_complex}
\end{figure}
\par
Following \cite[D\'efinition~0.4]{Porti:MAMCAU1997}, we choose geometric bases for $C_{i}$ as follows.
Let $\tilde{v}$ be a lift of the $0$-cell $v$, $\tilde{x}_j$ a lift of $x_j$ ($j=1,2,\dots,n$), and $\tilde{f}_k$ a lift of $f_k$ ($k=1,2,\dots,n-1$).
If we put $E:=\begin{pmatrix}0&1\\0&0\end{pmatrix}$, $H:=\begin{pmatrix}1&0\\0&-1\end{pmatrix}$ and $F:=\begin{pmatrix}0&0\\1&0\end{pmatrix}$, then $\{E,H,F\}$ forms a basis of $\sl_2(\C)$.
Note that we can use any basis of $\sl_2(\C)$ to define geometric bases since the Euler characteristic of a knot complement is zero (see \cite[D\'efinition~0.5]{Porti:MAMCAU1997}).
We choose $\{\tilde{v}\otimes{E},\tilde{v}\otimes{H},\tilde{v}\otimes{F}\}$ as a geometric basis for $C_0$, $\{\tilde{x}_1\otimes{E},\tilde{x}_1\otimes{H},\tilde{x}_1\otimes{F},\tilde{x}_2\otimes{E},\tilde{x}_2\otimes{H},\tilde{x}_2\otimes{F},\dots,\tilde{x}_n\otimes{E},\tilde{x}_n\otimes{H},\tilde{x}_n\otimes{F}\}$ as a geometric basis for $C_1$, and $\{\tilde{f}_1\otimes{E},\tilde{f}_1\otimes{H},\tilde{f}_1\otimes{F},\tilde{f}_2\otimes{E},\tilde{f}_2\otimes{H},\tilde{f}_2\otimes{F},\dots,\tilde{f}_{n-1}\otimes{E},\tilde{f}_{n-1}\otimes{H},\tilde{f}_{n-1}\otimes{F}\}$ as a geometric basis for $C_2$.
\par
With respect to these bases the differentials $\partial_2$ and $\partial_1$ are given by the Fox free differential calculus (see \cite[Chapter~11]{Lickorish:1997} for example).
\par
Let $G_n$ be the free group generated by $\{x_1,x_2,\dots,x_n\}$ and $\Phi\colon G_n\to\Pi$ the quotient map sending $x_i\in G_n$ to $x_i\in\Pi$.
Let $\Ad{\rho}(x_i)$ be the $3\times3$ matrix
\begin{equation*}
  \begin{pmatrix}
    \Ad{\rho(\Phi(x_i))}(E)_{1}&\Ad{\rho(\Phi(x_i))}(H)_{1}&\Ad{\rho(\Phi(x_i))}(F)_{1} \\
    \Ad{\rho(\Phi(x_i))}(E)_{2}&\Ad{\rho(\phi(x_i))}(H)_{2}&\Ad{\rho(\Phi(x_i))}(F)_{2} \\
    \Ad{\rho(\Phi(x_i))}(E)_{3}&\Ad{\rho(\Phi(x_i))}(H)_{3}&\Ad{\rho(\Phi(x_i))}(F)_{3}
  \end{pmatrix},
\end{equation*}
where 
\begin{align*}
  \Ad{\rho(\Phi(x_i))}(E)
  &=
  \Ad{\rho(\Phi(x_i))}(E)_{1}E+\Ad{\rho(\Phi(x_i))}(E)_{2}H+\Ad{\rho(\Phi(x_i))}(E)_{3}F,
  \\
  \Ad{\rho(\Phi(x_i))}(H)
  &=
  \Ad{\rho(\Phi(x_i))}(H)_{1}E+\Ad{\rho(\Phi(x_i))}(H)_{2}H+\Ad{\rho(\Phi(x_i))}(H)_{3}F,
  \\
  \Ad{\rho(\Phi(x_i))}(F)
  &=
  \Ad{\rho(\Phi(x_i))}(F)_{1}E+\Ad{\rho(\Phi(x_i))}(F)_{2}H+\Ad{\rho(\Phi(x_i))}(F)_{3}F.
\end{align*}
\par
Denoting by $\frac{\partial}{\partial\,x_j}\colon \Z[G_n]\to\Z[G_n]$ the Fox derivative \cite{Fox:free_differential_calculus_I}, the differential $\partial_2$ is given by the following $3(n-1)\times3n$ matrix:
\begin{equation}\label{eq:differential_2}
  \partial_2
  =
  \begin{pmatrix}
    \Ad{\rho}\left(\frac{\partial\,r_1}{\partial\,x_1}\right)&\cdots
    &\Ad{\rho}\left(\frac{\partial\,r_{n-1}}{\partial\,x_1}\right)
    \\
    \vdots&\ddots&\vdots
    \\
    \Ad{\rho}\left(\frac{\partial\,r_1}{\partial\,x_n}\right)&\cdots
    &\Ad{\rho}\left(\frac{\partial\,r_{n-1}}{\partial\,x_n}\right)
  \end{pmatrix}.
\end{equation}
Here $\Phi$ is regarded as a homomorphism $\Z[G_n]\to\Z[\Pi]$.
The differential $\partial_1$ is given by the following $3\times3n$ matrix:
\begin{equation}\label{eq:differential_1}
  \partial_1
  =
  \begin{pmatrix}
    \Ad{\rho}\left(x_1-1\right)&\cdots&\Ad{\rho}\left(x_n-1\right)
  \end{pmatrix}.
\end{equation}

\section{Fundamental group}\label{sec:pi1}
Let $T(2,2a+1)^{(2,2b+1)}$ be the $(2,2b+1)$-cable of the torus knot of type $(2,2a+1)$ (Figure~\ref{fig:iterated_torus_knot}).
Denote by $E:=S^3\setminus\Int{N\left(T(2,2a+1)^{(2,2b+1)}\right)}$ the complement of the interior of the regular neighborhood of $T(2,2a+1)^{(2,2b+1)}$, where $N$ means the regular neighborhood in the three-sphere $S^3$ and $\Int$ means the interior.
Then $E$ can be decomposed into $\pattern$ and $C$, where $\pattern:=S^3\setminus\Int{N(T(2,2a+1))}$ and $C$ is the knot in a solid torus depicted in Figure~\ref{fig:twist_2b+1}.
Put $S:=\pattern\cap C=\partial{\pattern}=\partial{C}$.
In other words $T(2,2a+1)^{(2,2b+1)}$ is the $(2,2b+1)$-cable of $T(2,2a+1)$, or the satellite knot of $T(2,2a+1)$ with companion the torus knot of type $(2,2b+1)$ on $S$.
\par
We calculate the fundamental group of $E$.
For more details see \cite[\S1.3]{Murakami:ACTMV2014}.
\par
Let $x$ and $y$ be the generators of the fundamental group $\pi_1(C)$ of $C$ as indicated in Figure~\ref{fig:torus_knot_pi1}, where the basepoint is on $S=\partial{C}$.
\begin{figure}[h!]
  \includegraphics[scale=0.3]{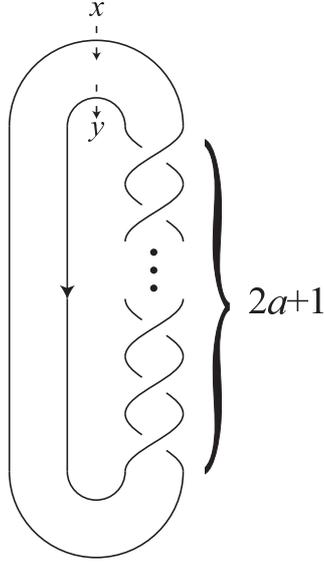}
  \caption{Generators of $\pi_1(C)$}
  \label{fig:torus_knot_pi1}
\end{figure}
Then we can easily see that $\pi_1(C)$ is presented as follows:
\begin{equation}\label{eq:pi1_C}
  \pi_1(C)
  =
  \langle
    x,y\mid
    (xy)^ax=y(xy)^a
  \rangle.
\end{equation}
Let $p$, $q$, and $r$ be the generators of $\pi_1(\pattern)$ as indicated in Figure~\ref{fig:pattern_pi1} with the basepoint on $S=\partial{\pattern}$ as before.
\begin{figure}[h!]
  \includegraphics[scale=0.25]{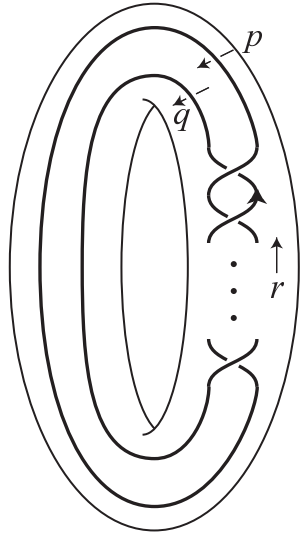}
  \caption{Generators of $\pi_1(\pattern)$}
  \label{fig:pattern_pi1}
\end{figure}
Then we have
\begin{equation*}
  \pi_1(\pattern)
  =
  \langle
    p,q,r\mid
    (pq)r=r(pq), r(pq)^bp=q(pq)^br
  \rangle.
\end{equation*}
If we give left $b$ full-twists to the solid torus, we have Figure~\ref{fig:pattern_twisted}.
\begin{figure}[h!]
  \includegraphics[scale=0.25]{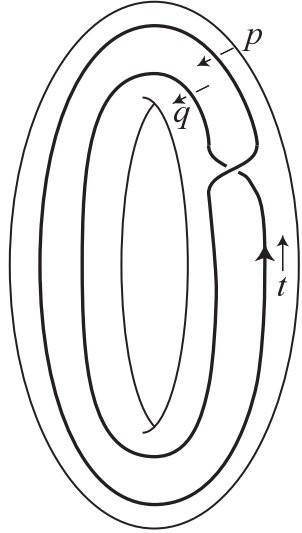}
  \caption{Twist the solid torus in Figure~\ref{fig:pattern_pi1} $b$ times counterclockwise}
  \label{fig:pattern_twisted}
\end{figure}
Then the element $t$ indicated in Figure~\ref{fig:pattern_twisted} equals $r(pq)^{b}$.
From this picture it is not hard to see $q=tpt^{-1}$.
Therefore we can reduce the presentation of $\pi_1(\pattern)$ as follows:
\begin{equation}\label{eq:pi1_pattern}
\begin{split}
  &\pi_1(\pattern)
  \\
  =&
  \left\langle
    p,q,r,t\mid
    (pq)r=r(pq), r(pq)^bp=q(pq)^br,r=t(pq)^{-b},q=tpt^{-1}
  \right\rangle
  \\
  =&
  \left\langle
    p,t\Bigm|
    ptpt^{-1}t\left(ptpt^{-1}\right)^{-b}=t\left(ptpt^{-1}\right)^{-b}ptpt^{-1},
  \right.
  \\
  &\qquad\quad
  \left.
    tp=tpt^{-1}(ptpt^{-1})^bt\left(ptpt^{-1}\right)^{-b}
  \right\rangle
  \\
  =&
  \left\langle
    p,t\mid ptpt=tptp
  \right\rangle.
\end{split}
\end{equation}
Note that the last presentation is the Wirtinger presentation of the torus link of type $(2,4)$ as depicted in Figure~\ref{fig:2_4}.
In fact, it can be easily seen that the complement of the knot in the solid torus of Figure~\ref{fig:pattern_twisted} is homeomorphic to the complement of the $(2,4)$ torus link.
\begin{figure}[h!]
  \includegraphics[scale=0.3]{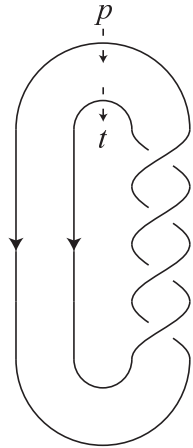}
  \caption{$(2,4)$ torus link}
  \label{fig:2_4}
\end{figure}
\par
Let $i^{C}\colon S\hookrightarrow C$, $i^{\pattern}\colon S\hookrightarrow\pattern$, $j^{C}\colon C\hookrightarrow E$, and $j^{\pattern}\colon D\hookrightarrow E$ be the inclusion maps as in \eqref{eq:inclusions}.
\begin{equation}\label{eq:inclusions}
\setlength\unitlength{3mm}
\raisebox{-14mm}{
\begin{picture}(20,10)(0,0)
  \put(0,5){\makebox(0,0){$S$}}
  \put(10,10){\makebox(0,0){$C$}}
  \put(10,0){\makebox(0,0){$\pattern$}}
  \put(20,5){\makebox(0,0){$E$}}
  \put(5,9){\makebox(0,0){$i^{C}$}}
  \put(5,1){\makebox(0,0){$i^{\pattern}$}}
  \put(15,9){\makebox(0,0){$j^{C}$}}
  \put(15,1){\makebox(0,0){$j^{\pattern}$}}
  \put(1,6){\vector(2,1){8}}
  \put(1,4){\vector(2,-1){8}}
  \put(11,10){\vector(2,-1){8}}
  \put(11,0){\vector(2,1){8}}
\end{picture}
}
\end{equation}
If we denote by $\lambda_{C}$ the preferred longitude of $T(2,2a+1)$, then its homotopy class is given by
\begin{equation*}
  \lambda_{C}
  =
  y(xy)^{2a}x^{-4a-1},
\end{equation*}
which is identified with $r=t\left(ptpt^{-1}\right)^{-b}\in\pi_1(\pattern)$.
The homotopy class of the meridian $\mu_{C}$ is given by $x$.
\par
So from \eqref{eq:pi1_C}, \eqref{eq:pi1_pattern} and van Kampen's theorem we have
\begin{equation*}
\begin{split}
  &\pi_1(E)
  \\
  =&
  \pi_1(C)\underset{\pi_1(S)}{\Asterisk}\pi_1(\pattern)
  \\
  =&
  \left\langle
    x,y,p,t\Bigm|
    (xy)^ax=y(xy)^a,ptpt=tptp,y(xy)^{2a}x^{-4a-1}=t\left(ptpt^{-1}\right)^{-b},
  \right.
  \\
  &\qquad\qquad\quad
  \left.\vphantom{\bigm|}
  x=ptpt^{-1}
  \right\rangle.
\end{split}
\end{equation*}
Note that we do not need the second relation because from the third and the fourth relations we have $t=y(xy)^{2a}x^{-4a-1}x^{b}=y(xy)^{2a}x^{-4a+b-1}$ and $ptp=xt=(xy)^{2a+1}x^{-4a+b-1}$, and so we have
\begin{equation*}
\begin{split}
  &t\cdot ptp\cdot t^{-1}\cdot(ptp)^{-1}
  \\
  =&
  y(xy)^{2a}x^{-4a+b-1}\cdot(xy)^{2a+1}x^{-4a+b-1}\cdot x^{4a-b+1}(xy)^{-2a}y^{-1}\cdot x^{4a-b+1}(xy)^{-2a-1}
  \\
  =&
  y(xy)^{2a}x(xy)^{-2a-1}
  \\
  &\text{(from the first relation)}
  \\
  =&
  (xy)^{a}x(xy)^{a}(xy)^{-a}y(xy)^{-a-1}
  \\
  =&
  1.
\end{split}
\end{equation*}
Therefore we have the following presentation with deficiency one.
\begin{equation*}
  \pi_1(E)
  =
  \langle
    x,y,p,t\mid
    (xy)^ax=y(xy)^a,y(xy)^{2a}x^{-4a-1}=t\left(ptpt^{-1}\right)^{-b},x=ptpt^{-1}
  \rangle.
\end{equation*}
\par
We can also calculate the homotopy class of the longitude $\lambda$ of $T(2,2a+1)^{(2,2b+1)}$ as follows (see \cite{Murakami:ACTMV2014}).
\begin{equation}\label{eq:lambda}
  \lambda
  =
  r(pq)^bpq^{-b}r(pq)^bp^{-3b-1}
  =
  \lambda_{C}x^bpq^{-b}\lambda_{C}x^bp^{-3b-1}.
\end{equation}
in $\pi_1(E)$.
The homotopy class of the meridian of $T(2,2a+1)^{(2,2b+1)}$ is $p$.
\section{Representation}\label{sec:rep}
In this section we study representations of $\pi_1(E)$ into $\SL(2;C)$.
For a representation $\rho\colon\pi_1(E)\to\SL(2;C)$, we denote its restriction to $\pi_1(C)$ ($\pi_1(\pattern)$, respectively) by $\rho_{C}$ ($\rho_{\pattern}$, respectively).
In the following, see \cite[\S1.3]{Murakami:ACTMV2014} again for details.
\par
We consider the following four cases:
(1) the case where both $\Im\rho_{C}$ and $\Im\rho_{\pattern}$ are Abelian,
(2) the case where $\Im\rho_{C}$ is Abelian and $\Im\rho_{\pattern}$ is non-Abelian,
(3) the case where $\Im\rho_{C}$ is non-Abelian and $\Im\rho_{\pattern}$ is Abelian, and
(4) the case where both $\Im\rho_{C}$ and $\Im\rho_{\pattern}$ are non-Abelian.
\subsection{Both $\Im\rho_{C}$ and $\Im\rho_{\pattern}$ are Abelian}\label{subsec:rep_AA}
For a complex number $z$, we put
\begin{align*}
  \rho(p)
  &=
  \rho(q)
  :=
  \begin{pmatrix}
    z&0\\
    0&z^{-1}
  \end{pmatrix},
  \\
  \rho(x)
  &=
  \rho(y)
  :=
  \rho(p)\rho(q)
  =
  \begin{pmatrix}
    z^2&0 \\
    0  &z^{-2}
  \end{pmatrix}.
\end{align*}
Since the longitude $\lambda_{C}$ is null-homologous, we have $\rho(r)=\rho(\lambda_{C})=I_2$, where $I_2$ is the two by two identity matrix.
Therefore
\begin{equation*}
  \rho(t)
  =
  \rho\left(r(pq)^b\right)
  =
  \begin{pmatrix}
    z^{2b}&0 \\
    0     &z^{-2b}
  \end{pmatrix}.
\end{equation*}
\par
Note that the longitude $\lambda$ of $T(2,2a+1)^{(2,2b+1)}$ is sent to $I_2$.
\par
We denote by $\rho^{\rm{AA}}_{\xi}$ be the representation $\rho$ with $z=\exp(\xi/2)$.
\subsection{$\Im\rho_{C}$ is Abelian and $\Im\rho_{\pattern}$ is non-Abelian}\label{subsec:rep_AN}
For a complex number $z$, if we define
\begin{align*}
  \rho(p)
  &:=
  \begin{pmatrix}
    z&1\\
    0&z^{-1}
  \end{pmatrix},
  \\
  \rho(q)
  &:=
  \begin{pmatrix}
    z                                &0 \\
    \omega_2+\omega_2^{-1}-z^2-z^{-2}&z^{-1}
  \end{pmatrix},
  \\
  \rho(x)
  &=
  \rho(y)
  :=
  \rho(p)\rho(q)
  =
  \theta_1^{-1}
  \begin{pmatrix}
    \omega_2&z^{-1} \\
    0       &\omega_2^{-1}
  \end{pmatrix}
  \theta_1
\end{align*}
with $\omega_2^{2b+1}=-1$ ($\omega_2\ne-1$) and
\begin{equation*}
  \theta_1
  :=
  \begin{pmatrix}
    1                    &0 \\
    \omega_2^{-1}z-z^{-1}&1
  \end{pmatrix},
\end{equation*}
then $\rho$ is well defined.
Since the longitude $\lambda_{C}$ is null-homologous, we have $\rho(r)=\rho(\lambda_{C})=I_2$, where $I_2$ is the two by two identity matrix.
Therefore
\begin{equation*}
  \rho(t)
  =
  \rho\left(r(pq)^b\right)
  =
  \theta_1^{-1}
  \begin{pmatrix}
    \omega_2^{b}&\frac{\omega_2^{b}-\omega_2^{-b}}{\omega_2-\omega_2^{-1}}z^{-1}\\
    0           &\omega_2^{-b}
  \end{pmatrix}
  \theta_1.
\end{equation*}
\par
The longitude $\lambda$ of $T(2,2a+1)^{(2,2b+1)}$ is sent to
\begin{equation*}
  \rho(\lambda)
  =
  \begin{pmatrix}
    -z^{-(2b+1)}&\frac{z^{2b+1}-z^{-(2b+1)}}{z-z^{-1}} \\
    0           &-z^{2b+1}
  \end{pmatrix}.
\end{equation*}
\par
We denote by $\rho^{\rm{AN}}_{\xi,j}$ be the representation $\rho$ with $z=\exp(\xi/2)$ and $\omega_2=\exp\left(\frac{(2j+1)\pi\sqrt{-1}}{2b+1}\right)$ ($j=0,1,\dots,b-1$).
\subsection{$\Im\rho_{C}$ is non-Abelian and $\Im\rho_{\pattern}$ is Abelian}\label{subsec:rep_NA}
If we define
\begin{align*}
  \rho(p)
  &=
  \rho(q)
  :=
  \begin{pmatrix}
    z&\frac{1}{z+z^{-1}} \\
    0&z^{-1}
  \end{pmatrix},
  \\
  \rho(x)
  &:=
  \begin{pmatrix}
    z^2&1 \\
    0&z^{-2}
  \end{pmatrix},
  \\
  \rho(y)
  &:=
  \begin{pmatrix}
    z^2                              &0 \\
    \omega_1+\omega_1^{-1}-z^4-z^{-4}&z^{-2}
  \end{pmatrix}
\end{align*}
with $\omega_1^{2a+1}=-1$ ($\omega_1\ne-1$), then $\rho$ is well-defined.
Note that the longitude $\lambda_{C}$ of $C$ is presented as
\begin{equation*}
  \rho(\lambda_{C})
  =
  \rho(y(xy)^{2a}x^{-4a-1})
  =
  \begin{pmatrix}
    -z^{-8a-4}&\frac{z^{8a+4}-z^{-8a-4}}{z^2-z^{-2}} \\
    0         &-z^{8a+4}
  \end{pmatrix}
  =
  -\rho(p)^{-8a-4},
\end{equation*}
which equals $\rho(r)$.
We also have
\begin{equation*}
  \rho(t)
  =
  \rho(r(pq)^b)
  =
  \begin{pmatrix}
    -z^{-8a+2b-4}&\frac{z^{8a-2b+4}-z^{-8a+2b-4})}{z^2-z^{-2}} \\
    0            &-z^{8a-2b+4}
  \end{pmatrix}
  =
  -\rho(p)^{-8a+2b-4}.
\end{equation*}
Note that if we put
\begin{equation*}
  \theta_2
  :=
  \begin{pmatrix}
    1&\frac{z^2}{z^4-1} \\
    0&1
  \end{pmatrix},
\end{equation*}
we have
\begin{align*}
  \rho(p)
  &=
  \theta_2^{-1}
  \begin{pmatrix}z&0\\0&z^{-1}\end{pmatrix}
  \theta_2,
  \\
  \rho(\lambda_{C})
  &=
  \theta_2^{-1}
  \begin{pmatrix}-z^{-8a-4}&0\\0&-z^{8a+4}\end{pmatrix}
  \theta_2,
  \\
  \rho(t)
  &=
  \theta_2^{-1}
  \begin{pmatrix}-z^{-8a+2a-4}&0\\0&-z^{8a-2b+4}\end{pmatrix}
  \theta_2.
\end{align*}
\par
The longitude $\lambda$ of $T(2,2a+1)^{(2,2b+1)}$ is sent to
\begin{equation*}
  \rho(\lambda)
  =
  \begin{pmatrix}
    z^{-8(2a+1)+(2b+1)}&\frac{z^{-8(2a+1)+(2b+1)}-z^{8(2a+1)-(2b+1)}}{z^2-z^{-2}} \\
    0                  &z^{8(2a+1)-(2b+1)}
  \end{pmatrix}
  =
  \rho(p)^{-8(2a+1)+(2b+1)}.
\end{equation*}
\par
We denote by $\rho^{\rm{NA}}_{\xi,k}$ be the representation $\rho$ with $z=\exp(\xi/2)$ and $\omega_1:=\exp\left(\frac{(2k+1)\pi\sqrt{-1}}{2a+1}\right)$ ($k=0,1,\dots,a-1$).
\subsection{Both $\Im\rho_{C}$ and $\Im\rho_{\pattern}$ are non-Abelian}\label{subsec:rep_NN}
If we define
\begin{align*}
  \rho(p)
  &:=
  \begin{pmatrix}
    z&1\\
    0&z^{-1}
  \end{pmatrix},
  \\
  \rho(q)
  &:=
  \begin{pmatrix}
    z                                &0 \\
    \omega_3+\omega_3^{-1}-z^2-z^{-2}&z^{-1}
  \end{pmatrix},
  \\
  \rho(x)
  &:=
  \rho(pq)
  =
  \theta_3^{-1}
  \begin{pmatrix}
    \omega_3&1 \\
    0       &\omega_3^{-1}
  \end{pmatrix}
  \theta_3
  =
  \tilde{\theta}_1^{-1}
  \begin{pmatrix}
    \omega_3&z^{-1} \\
    0       &\omega_3^{-1}
  \end{pmatrix}
  \tilde{\theta}_1,
  \\
  \rho(y)
  &:=
  \theta_3^{-1}
  \begin{pmatrix}
    \omega_3&0 \\
    \omega_1+\omega_1^{-1}-\omega_3^2-\omega_3^{-2}&\omega_3^{-1}
  \end{pmatrix}
  \theta_3
\end{align*}
with $\omega_1^{2a+1}=-1$, $\omega_3^{2b+1-4(2a+1)}=-1$ ($\omega_3\ne-1$),
\begin{equation*}
  \tilde{\theta}_1
  :=
  \begin{pmatrix}
    1                    &0 \\
    \omega_3^{-1}z-z^{-1}&1
  \end{pmatrix},
\end{equation*}
and
\begin{equation*}
  \theta_3
  :=
  \begin{pmatrix}
    z^{1/2}&0 \\
    0      &z^{-1/2}
  \end{pmatrix}
  \tilde{\theta}_1
  =
  \begin{pmatrix}
    z^{1/2}                      &0 \\
    z^{1/2}\omega_3^{-1}-z^{-3/2}&z^{-1/2}
  \end{pmatrix}.
\end{equation*}
Note that $\tilde{\theta}_1$ is obtained from $\theta_1$ by replacing $\omega_2$ with $\omega_3$.
The images of $\lambda_{C}$  and $t$ are given as follows:
\begin{align*}
  \rho(\lambda_{C})
  &=
  \theta_3^{-1}
  \begin{pmatrix}
    -\omega_3^{-4a-2}&\dfrac{\omega_3^{4a+2}-\omega_3^{-4a-2}}{\omega_3-\omega_3^{-1}} \\[5mm]
    0                &-\omega_3^{4a+2}
  \end{pmatrix}
  \theta_3
  \\
  &=
  \tilde{\theta}_1^{-1}
  \begin{pmatrix}
    -\omega_3^{-4a-2}&\frac{\omega_3^{4a+2}-\omega_3^{-4a-2}}{\omega_3-\omega_3^{-1}}z^{-1} \\[5mm]
    0                &-\omega_3^{4a+2}
  \end{pmatrix}
  \tilde{\theta}_1
  \\
  \rho(t)
  &=
  \theta_3^{-1}
  \begin{pmatrix}
    -\omega_3^{-4a+b-2}&\dfrac{\omega_3^{4a-b+2}-\omega_3^{-4a+b-2}}{\omega_3-\omega_3^{-1}} \\[5mm]
    0                  &-\omega_3^{4a-b+2}
  \end{pmatrix}
  \theta_3
  \\
  &=
  \tilde{\theta}_1^{-1}
  \begin{pmatrix}
    \omega_3^{4a-b+1}&\dfrac{\omega_3^{4a-b+1}-\omega_3^{-4a+b-1}}{\omega_3-\omega_3^{-1}}z^{-1} \\[5mm]
    0                &\omega_3^{-4a+b-1}
  \end{pmatrix}
  \tilde{\theta}_1.
\end{align*}
\par
The image of the longitude $\lambda$ is given by
\begin{equation*}
  \begin{pmatrix}
    -z^{-4b-2}&\frac{z^{4b+2}-z^{4b-2}}{z-z^{-1}} \\
    0         &-z^{4b+2}
  \end{pmatrix}.
\end{equation*}
\par
We denote by $\rho^{\rm{NN}}_{\xi,l,m}$ be the representation $\rho$ with $z=\exp(\xi/2)$, $\omega_1:=\exp\left(\frac{(2m+1)\pi\sqrt{-1}}{2a+1}\right)$ ($m=0,1,\dots,a-1$) and $\omega_3:=\exp\left(\frac{(2l+1)\pi\sqrt{-1}}{2b+1-4(2a+1)}\right)$ ($l=0,1,\dots,b-4a-3$).
\section{Reidemeister torsion - Abelian case}
In this section we calculate the Reidemeister torsion of $E:=S^3\setminus\Int{N(T(2,2a+1)^{(2,2b+1)})}$ twisted by the adjoint action of an Abelian representation $\rho$.
We prove a more general formula of the Reidemeister torsion of an Abelian representation.
\par
Let $K$ be a knot in $S^3$ and put $E(K):=S^3\setminus\Int{N(K)}$.
Denote by $\pi_1(E(K))$ the fundamental group of $E(K)$ with basepoint in $\partial{E(K)}$.
We may assume that $\pi_1(E(K))$ has a Wirtinger presentation $\langle x_1,x_2,\dots,x_n\mid r_1,r_2,\dots,r_{n-1}\rangle$ so that if we Abelianize it then $x_i$ is sent to the generator $t\in\Z$ for $i=1,2,\dots,n$ (see for example \cite{Lickorish:1997}).
Here we regard $\Z$ as a multiplicative group $\langle t\mid-\rangle$.
\par
Let $\rho$ be the Abelian representation of $\pi_1(E(K))$ into $\SL(2;\C)$ sending every $x_i$ to $\begin{pmatrix}z&0\\0&z^{-1}\end{pmatrix}$ with $z\ne\pm1$.
Then we have the following proposition, which should be well known to experts.
Compare it with \cite[Theorem~4]{Milnor:ANNMA21962}.
\begin{prop}[Folklore]\label{prop:Alexander}
The Reidemester torsion $\Tor(E(K);\rho)$ of $E(K)$ twisted by the adjoint action of the representation $\rho$  associated with the meridian is given by
\begin{equation*}
  \Tor(E(K);\rho)
  =
  \pm
  \left(\frac{\Delta(K;z^2)}{z-z^{-1}}\right)^2,
\end{equation*}
where $\Delta(K;t)$ is the Alexander polynomial normalized so that $\Delta(K;1)=1$ and $\Delta(K;t^{-1})=\Delta(K;t)$.
\end{prop}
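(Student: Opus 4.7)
The plan is to exploit the diagonal structure of the adjoint action. Since $\rho$ is abelian, each matrix $\Ad{\rho(x_i)}$ is the same $3\times 3$ diagonal matrix $\diag(z^{-2},1,z^{2})$ in the basis $\{E,H,F\}$ of $\sl(2;\C)$. The twisted chain complex $C_*(E(K);\rho)$ therefore decomposes as an internal direct sum $C_*^{E}\oplus C_*^{H}\oplus C_*^{F}$ of three subcomplexes, each carrying $\C$ coefficients and a one-dimensional $\pi_1$-action in which every Wirtinger generator acts by the same scalar $w\in\{z^{-2},1,z^{2}\}$. Since the geometric bases $\tilde v\otimes\{E,H,F\}$, $\tilde x_j\otimes\{E,H,F\}$, $\tilde f_k\otimes\{E,H,F\}$ chosen in Section~\ref{sec:definition} respect this splitting, the homological Reidemeister torsion factors, up to sign, as the product of the three subcomplex torsions.

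For the nontrivial weights $w=z^{\pm2}$, the complex $C_*^{E}$ (respectively $C_*^{F}$) is acyclic because $w\ne 1$, and its torsion is a classical one-variable invariant. By Milnor's formula (compare \cite[Theorem~4]{Milnor:ANNMA21962}) it equals $\Delta(K;w)/(w-1)$ up to sign: the numerator is the determinant of an $(n-1)\times(n-1)$ minor of the Fox Jacobian specialised at $t=w$, and the denominator comes from $\partial_1$. Taking the product of the two nontrivial contributions and using the symmetry $\Delta(K;z^{-2})=\Delta(K;z^{2})$ together with $(z^{2}-1)(z^{-2}-1)=-(z-z^{-1})^{2}$ produces $\pm\bigl(\Delta(K;z^{2})/(z-z^{-1})\bigr)^{2}$.

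For the trivial weight $w=1$ the subcomplex $C_*^{H}$ is the ordinary cellular chain complex of $E(K)$ with $\C$ coefficients. Its homology is $H_0\cong H_1\cong\C$ (with $H_2=0$), generated by $\tilde v\otimes H$ and $\mu\otimes H$, and these are the geometric bases ``associated with the meridian'' referenced in the statement. A direct calculation, using the fact that $\partial_1=0$ at the trivial weight and that the Wirtinger Jacobian at $t=1$ is the incidence matrix of a connected graph on $n$ vertices with $n-1$ edges, shows that every maximal minor of this matrix has determinant $\pm 1$, so this subcomplex torsion is $\pm 1$. Multiplying the three factors then yields the proposition.

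The main obstacle is the sign bookkeeping throughout: one must check that the three subcomplex factors combine with the correct orientation of the geometric basis $\{E,H,F\}$ and that the meridian contribution to the trivial-weight block is compatible with the column deletion used in the $(n-1)\times(n-1)$ minor that produces $\Delta(K;w)$ in the nontrivial weights. Once these normalisations are pinned down, the algebraic content reduces to Milnor's Alexander-torsion formula applied in the two nontrivial weights together with the elementary identity $(z^{2}-1)(z^{-2}-1)=-(z-z^{-1})^{2}$.
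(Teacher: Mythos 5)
Your proposal is correct and follows essentially the same route as the paper: the paper's proof likewise rests on the diagonal form $\Ad{\rho(x_i)}=\diag(z^{-2},1,z^{2})$, the resulting factorization $\det\tilde{A}(X)=\tilde{\Delta}(K;z^{-2})\,\tilde{\Delta}(K;1)\,\tilde{\Delta}(K;z^{2})=\pm\Delta(K;z^2)^2$, and the identity $(z^{2}-1)(z^{-2}-1)=-(z-z^{-1})^{2}$ arising from $\partial_1$, with the homology bases $\tilde{v}\otimes H$ and $\tilde{\mu}_{K}\otimes H$ sitting exactly in your trivial-weight block. The only difference is organizational: the paper evaluates one global determinant quotient for the $3n$-dimensional complex rather than splitting into three weight subcomplexes and invoking Milnor's formula for the two acyclic pieces, but the essential content is identical.
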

\begin{cor}
Let $\rho^{\rm{AA}}_{\xi}$ be the representation defined in Subsection~\ref{subsec:rep_AA} with $z=\exp(\xi/2)$.
Then we have
\begin{equation*}
  \Tor(E;\rho^{\rm{AA}}_{\xi})
  =
  \pm
  \left(\frac{\Delta(K;\exp\xi)}{2\sinh(\xi/2)}\right)^2
  =
  \pm\tau_0(\xi)^{-2}.
\end{equation*}
\end{cor}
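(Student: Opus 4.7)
The plan is to observe that this corollary is a direct specialization of Proposition~\ref{prop:Alexander} to the knot $K=T(2,2a+1)^{(2,2b+1)}$ and the parameter $z=\exp(\xi/2)$, so the proof should essentially consist of verifying the hypotheses and then matching symbols.

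First I would check that $\rho^{\rm{AA}}_{\xi}$ really is an Abelian representation of the kind considered in Proposition~\ref{prop:Alexander}. In \S\ref{sec:pi1} the meridian of $T(2,2a+1)^{(2,2b+1)}$ is identified with $p$, and in \S\ref{subsec:rep_AA} we have $\rho^{\rm{AA}}_{\xi}(p)=\diag\!\left(\exp(\xi/2),\exp(-\xi/2)\right)$. Because every Wirtinger generator of $\pi_1(E)$ is conjugate to the meridian and the image of $\rho^{\rm{AA}}_{\xi}$ is Abelian, each such generator is sent to the same diagonal matrix $\diag(z,z^{-1})$ with $z=\exp(\xi/2)$. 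This is exactly the setup of Proposition~\ref{prop:Alexander}.

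Next I would simply apply the proposition. It yields
\begin{equation*}
  \Tor(E;\rho^{\rm{AA}}_{\xi})
  =
  \pm\left(\frac{\Delta\bigl(T(2,2a+1)^{(2,2b+1)};z^{2}\bigr)}{z-z^{-1}}\right)^{2}.
\end{equation*}
Substituting $z=\exp(\xi/2)$ gives $z^{2}=\exp\xi$ and $z-z^{-1}=2\sinh(\xi/2)$. Comparing with the definition
\begin{equation*}
  \tau_{0}(\xi)=\frac{2\sinh(\xi/2)}{\Delta(T(2,2a+1)^{2b+1};\exp\xi)}
\end{equation*}
from the introduction completes the identification $\Tor(E;\rho^{\rm{AA}}_{\xi})=\pm\tau_{0}(\xi)^{-2}$.

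Since Proposition~\ref{prop:Alexander} is stated as a \emph{folklore} result and proved separately, there is no real obstacle here; the only non-trivial verification is that $\rho^{\rm{AA}}_{\xi}$ sends the meridian (and hence all Wirtinger generators) to $\diag(z,z^{-1})$ with the correct $z$, which is immediate from the explicit formulas in \S\ref{subsec:rep_AA}.
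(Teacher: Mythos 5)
Your proposal is correct and matches the paper's (implicit) argument: the corollary is stated without separate proof precisely because it is the specialization of Proposition~\ref{prop:Alexander} to $K=T(2,2a+1)^{(2,2b+1)}$ with $z=\exp(\xi/2)$, followed by the substitutions $z^2=\exp\xi$ and $z-z^{-1}=2\sinh(\xi/2)$ to match the definition of $\tau_0(\xi)$. Your check that $\rho^{\rm{AA}}_{\xi}$ sends the meridian $p$ (hence every Wirtinger generator, by Abelianness and conjugacy) to $\diag(z,z^{-1})$ is exactly the verification needed.
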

\begin{proof}[Proof of Proposition~\ref{prop:Alexander}]
First note that $E(K)$ is simple homotopy equivalent to a cell complex with one $0$-cell $v$ (the basepoint), $n$ $1$-cells $x_1,x_2,\dots,x_n$, and $(n-1)$ $2$-cells $f_1,f_2,\dots,f_{n-1}$.
Here the $2$-cell $f_j$ is attached to the bouquet $v\cup\left(\bigcup_{i=1}^{n}x_i\right)$ so that its boundary $\partial{f_j}$ presents the word $r_i$.
\par
Then as described in Section~\ref{sec:definition}, the differentials of the chain complex $\{0\}\rightarrow C_2(E(K);\rho)\xrightarrow{\partial_2}C_1(E(K);\rho)\xrightarrow{\partial_1}C_0(E(K);\rho)\rightarrow\{0\}$ is given as follows:
\begin{align*}
  \partial_2
  &=
  \begin{pmatrix}
    \Ad{\rho}\left(\frac{\partial r_1}{\partial x_1}\right)&\cdots&
      \Ad{\rho}\left(\frac{\partial r_{n-1}}{\partial x_1}\right)
    \\
    \vdots&\ddots&\vdots
    \\
    \Ad{\rho}\left(\frac{\partial r_1}{\partial x_n}\right)&\cdots&
      \Ad{\rho}\left(\frac{\partial r_{n-1}}{\partial x_n}\right)
  \end{pmatrix},
  \\
  \partial_1
  &=
  \begin{pmatrix}
    \Ad{\rho}(x_1-1)&\Ad{\rho}(x_2-1)&\cdots&\Ad{\rho}(x_n-1)
  \end{pmatrix}
  \\
  &=
  \begin{pmatrix}X-I_3&X-I_3&\cdots&X-I_3\end{pmatrix}
  \\
  &=
  \left(
    \begin{array}{ccc:ccc:c:ccc}
      z^{-2}-1&0&0      &z^{-2}-1&0&0      &      &z^{-2}-1&0&0      \\
      0       &0&0      &0       &0&0      &\cdots&0       &0&0      \\
      0       &0&z^{2}-1&0       &0&z^{2}-1&      &0       &0&z^{2}-1
    \end{array}
  \right).
\end{align*}
Here $X$ is the following three by three matrix:
\begin{equation*}
\begin{split}
  X
  &:=
  \Ad{\rho}(x_i)
  \\
  &=
  \begin{pmatrix}
    \Ad{\rho(\Phi(x_i))}(E)_1&\Ad{\rho(\Phi(x_i))}(H)_1&\Ad{\rho(\Phi(x_i))}(F)_1 \\
    \Ad{\rho(\Phi(x_i))}(E)_2&\Ad{\rho(\Phi(x_i))}(H)_1&\Ad{\rho(\Phi(x_i))}(F)_2 \\
    \Ad{\rho(\Phi(x_i))}(E)_3&\Ad{\rho(\Phi(x_i))}(H)_1&\Ad{\rho(\Phi(x_i))}(F)_3 \\
  \end{pmatrix}
  \\
  &=
  \begin{pmatrix}
    z^{-2}&0&0 \\
    0     &1&0 \\
    0     &0&z^{2}
  \end{pmatrix}
\end{split}
\end{equation*}
since $\rho(x_j)=\begin{pmatrix}z&0\\0&z^{-1}\end{pmatrix}$ for any $j$ and
\begin{align*}
  \rho(x_i)^{-1}E\rho(x_i)
  &=
  \begin{pmatrix}
    0&z^{-2}\\
    0&0
  \end{pmatrix},
  \\
  \rho(x_i)^{-1}H\rho(x_i)
  &=
  \begin{pmatrix}
    1&0 \\
    0&-1
  \end{pmatrix},
  \\
  \rho(x_i)^{-1}F\rho(x_i)
  &=
  \begin{pmatrix}
    0    &0 \\
    z^{2}&0
  \end{pmatrix}.
\end{align*}
\par
Note that if we let $A(t):=\alpha\left(\dfrac{\partial r_j}{\partial x_i}\right)_{\substack{i=1,2,\dots,n\\j=1,2,\dots,n-1\hfill}}$ be the Alexander matrix of $K$ with $\alpha\colon\Z[\pi_1(E(K))]\to\Z[t,t^{-1}]$ the homomorphism induced by the Abelianization, then $\partial_2$ is given by $A(X)$, where we replace $t$ in $A(t)$ with the $3\times3$ matrix $X$ (and $1$ with $I_3$).
Note also that if we denote by $\tilde{A}(t)$ the $(n-1)\times(n-1)$ matrix obtained from $A(t)$ by removing the first row, then $\det\tilde{A}(t)$ equals the (unnormalized) Alexander polynomial $\tilde{\Delta}(K;t)$ of $K$.
Since $X$ is diagonal with diagonal entries $z^{-2}$, $1$, and $z^2$, we have
\begin{equation*}
\begin{split}
  \det\tilde{A}(X)
  &=
  \det\tilde{A}(z^{-2})\det\tilde{A}(1)\det\tilde{A}(z^2)
  \\
  &=
  \tilde{\Delta}(K;z^{-2})\tilde{\Delta}(K;1)\tilde{\Delta}(K;z^2)
  \\
  &=
  \pm\Delta(K;z^2)^2.
\end{split}
\end{equation*}
\par
It can be easily seen that $\Ker\partial_1=\C^{3n-2}$, generated by $\{\vec{e}_2,\vec{e}_{5},\dots,\vec{e}_{3n-1},\vec{e}_1-\vec{e}_4,\vec{e}_4-\vec{e}_7,\dots,\vec{e}_{3n-5}-\vec{e}_{3n-2},\vec{e}_3-\vec{e}_6,\vec{e}_6-\vec{e}_9,\dots,\vec{e}_{3n-3}-\vec{e}_{3n}\}$, where $\vec{e}_k\in C_1(E(K);\rho)$ is the $3n\times1$ matrix with $1$ at $(k,1)$-entry and $0$ otherwise, and that $\Im\partial_1=\C^2$, generated by $\left\{\begin{pmatrix}1\\0\\0\end{pmatrix},\begin{pmatrix}0\\0\\1\end{pmatrix}\right\}$.
Since the determinant of the $3(n-1)\times3(n-1)$ matrix $\tilde{A}(X)$ is not zero, the rank of $A(X)$ is $3n-3$, $\partial_2$ is injective, and so $\Im\partial_2=\C^{3n-3}$.
\par
Therefore we have
\begin{alignat*}{2}
  H_2(C;\rho)
  &=
  \{0\},
  &&
  \\
  H_1(C;\rho)
  &=
  \C
  &\quad&\text{(generated by $[\tilde{x}_1]\otimes\begin{pmatrix}0\\1\\0\end{pmatrix}$)},
  \\
  H_0(C;\rho)
  &=
  \C
  &\quad&\text{(generated by $[\tilde{v}]\otimes\begin{pmatrix}0\\1\\0\end{pmatrix}$)}.
\end{alignat*}
\par
Now we calculate the Reidemeister torsion of $E(K)$.
From the calculation above if we put
\begin{align*}
  \mathbf{b}_2
  &:=
  \left\langle
    \underbrace{
    \begin{pmatrix}1\\0\\\vdots\\0\end{pmatrix},
    \begin{pmatrix}0\\1\\\vdots\\0\end{pmatrix},
    \cdots,
    \begin{pmatrix}0\\0\\\vdots\\1\end{pmatrix}}_{3n-3}
  \right\rangle,
  \\
  \tilde{\mathbf{h}}_1
  &:=
  \left\langle\vec{e}_2\right\rangle,
  \\
  \mathbf{b}_1
  &:=
  \left\langle\vec{e}_1,\vec{e}_3\right\rangle,
  \\
  \tilde{\mathbf{h}}_0
  &:=
  \left\langle
    \begin{pmatrix}0\\1\\0\end{pmatrix}
  \right\rangle,
\end{align*}
we see that $\mathbf{b}_2$ forms a basis of $C_2(E(K);\rho)$, that $\mathbf{b}_1\cup\tilde{\mathbf{h}}_1\cup\partial_2(\mathbf{b}_2)$ forms a basis of $C_1(E(K);\rho)$, and that $\tilde{\mathbf{h}}_0\cup\partial_1(\mathbf{b}_1)$ forms a basis of $C_0(E(K);\rho)$.
Therefore the Reidemeister torsion of $E(K)$ associated with these bases is
\begin{equation*}
\begin{split}
  \Tor(E(K);\rho)
  &=\pm
  \frac{
  \left|
    \begin{array}{ccc:c}
      1&0&0& \\
      0&0&1& \\
      0&1&0& \\
      0&0&0& \\
      0&0&0& \\
      0&0&0& \\
      &\raisebox{3mm}{\vdots}&&\raisebox{10mm}{\mbox{\smash{\huge{$A(X)$}}}}\\
      0&0&0& \\
      0&0&0& \\
      0&0&0&
    \end{array}
  \right|
  }
  {
  \det(I_{3n-3})
  \begin{vmatrix}
    0&z^{-2}-1&0      \\
    1&0       &0      \\
    0&0       &z^{2}-1
  \end{vmatrix}}
  \\
  &=
  \pm
  \left(\frac{\Delta(K;z^2)}{(z-z^{-1})}\right)^2
\end{split}
\end{equation*}
as required.
\par
Note that we use $[\tilde{\mu}_{K}\otimes H]$ as the generator of $H_1(C;\rho)$ and $[\tilde{v}\otimes H]$ as the generator of $H_0(C;\rho)$.
Here $H$ can be characterized as the vector that is invariant under the adjoint action of $\rho(\mu_{K})$.
\end{proof}

\section{Reidemeister torsion - Non-Abelian case}
This section is devoted to the calculation of the Reidemeister torsions twisted by the adjoint actions of non-Abelian representations.
To do this we will use the following formula \cite[Proposition~0.11]{Porti:MAMCAU1997}.
\begin{equation}\label{eq:MV_torsion}
  \Tor(E;\rho)
  =
  \frac{\Tor(C;\rho)\Tor(\pattern;\rho)}{\Tor(S;\rho)\Tor(\mathcal{H}_{\ast})},
\end{equation}
where $\mathcal{H}_{\ast}$ is the following Mayer--Vietoris exact sequence associated with the decomposition of $E=C\cup_{S}D$:
\begin{equation}\label{eq:MV_exact}
\begin{CD}
  @>\delta_3>>H_2(S;\rho)
  @>\varphi_2>>H_2(C;\rho)\oplus H_2(\pattern;\rho)
  @>\psi_2>>H_2(E;\rho)
  \\
  @>\delta_2>>H_1(S;\rho)
  @>\varphi_1>>H_1(C;\rho)\oplus H_1(\pattern;\rho)
  @>\psi_1>>H_1(E;\rho)
  \\
  @>\delta_1>>H_0(S;\rho)
  @>\varphi_0>>H_0(C;\rho)\oplus H_0(\pattern;\rho)
  @>\psi_0>>H_0(E;\rho).
\end{CD}
\end{equation}
Here we put $\varphi_k:=i^C_{\ast}\oplus i^{\pattern}_{\ast}$ and $\psi_k:=j^C_{\ast}-j^{\pattern}_{\ast}$, and $\delta_i$ is the connecting homomorphism.
Note that the degrees of $\mathcal{H}_{\ast}$ is defined as
\begin{align*}
  \mathcal{H}_{3k}
  &=
  H_{k}(E;\rho),
  \\
  \mathcal{H}_{3k+1}
  &=
  H_{k}(C;\rho)\oplus H_{k}(D;\rho),
  \\
  \mathcal{H}_{3k+2}
  &=
  H_{k}(S;\rho).
\end{align*}
\begin{rem}
Lines~5--7, Page~8 in \cite{Porti:MAMCAU1997} should be read as
\begin{align*}
  \mathcal{H}_{3i}  
  &=
  H_{i}({\pattern}_{\ast}),
  \\
  \mathcal{H}_{3i+1}
  &=
  H_{i}(N_{\ast}),
  \\
  \mathcal{H}_{3i+2}
  &=
  H_{i}(M_{\ast}).
\end{align*}
See \cite[Footnote, P.~367]{Milnor:BULAM11966}.
\end{rem}
\subsection{Reidemeister torsion of a torus}\label{subsec:torus}
In this subsection we calculate the Reidemeister torsion of the torus $S$.
\par
In our case we assume that the meridian $\mu_{C}$ of $S$ is sent to a diagonal matrix $\begin{pmatrix}\zeta&0\\0&\zeta^{-1}\end{pmatrix}$ with $\zeta^2\ne1$ by conjugation.
Since the longitude $\lambda_{C}$ commutes with $\mu_{C}$, it is sent to $\begin{pmatrix}\eta&0\\0&\eta^{-1}\end{pmatrix}$ ($\eta^2$ may be 1).
\par
We regard $S$ as a cell complex with one $0$-cell $v$ (the basepoint), two $1$-cells $\mu_{C}$ and $\lambda_{C}$, and one $2$-cell $f$.
Here $f$ is attached to the bouquet $v\cup\mu_{C}\cup\lambda_{C}$ so that $\partial{f}$ is identified with $\mu_{C}\lambda_{C}(\mu_{C})^{-1}(\lambda_{C})^{-1}$.
So the $0$-dimensional chain complex $C_0(S;\rho)$ is spanned by $\{\tilde{v}\otimes E,\tilde{v}\otimes H,\tilde{v}\otimes F\}$, where $\tilde{v}$ is the lift of $v$ in the universal cover of $S$.
Similarly, $C_1(S;\rho)$ is spanned by $\{\tilde{\mu}_{C}\otimes E,\tilde{\mu}_{C}\otimes H,\tilde{\mu}_{C}\otimes F,\tilde{\lambda}_{C}\otimes E,\tilde{\lambda}_{C}\otimes H,\tilde{\lambda}_{C}\otimes F\}$, and $C_2(S;\rho)$ is spanned by $\{\tilde{f}\otimes E,\tilde{f}\otimes H,\tilde{f}\otimes F\}$.
\par
Since we calculate
\begin{align*}
  \rho(\mu_{C})^{-1}E\rho(\mu_{C})
  &=
  \begin{pmatrix}
    0&\zeta^{-2}\\
    0&0
  \end{pmatrix},
  \\
  \rho(\mu_{C})^{-1}H\rho(\mu_{C})
  &=
  \begin{pmatrix}
    1&0\\
    0&-1
  \end{pmatrix},
  \\
  \rho(\mu_{C})^{-1}F\rho(\mu_{C})
  &=
  \begin{pmatrix}
    0        &0\\
    \zeta^{2}&0
  \end{pmatrix},
\end{align*}
the adjoint action of $\rho(\mu_{C})$ is given by the multiplication from the left of the following three by three matrix:
\begin{equation*}
  M:=
  \begin{pmatrix}
    \zeta^{-2}&0&0 \\
    0         &1&0 \\
    0         &0&\zeta^2
  \end{pmatrix}.
\end{equation*}
Similarly, the adjoint action of $\rho(\lambda_{C})$ is given by
\begin{equation*}
  L:=
  \begin{pmatrix}
    \eta^{-2}&0&0 \\
    0        &1&0 \\
    0        &0&\eta^2
  \end{pmatrix}.
\end{equation*}
\par
The differential $\partial_2^{S}$ is given by
\begin{equation*}
\begin{split}
  \partial_2^{S}
  &=
  \Phi
  \begin{pmatrix}
    \Ad{\rho}
    \left(
      \frac{\partial\left(\mu_{C}\lambda_{C}(\mu_{S})^{-1}(\lambda_{C})^{-1}\right)}
           {\partial{\mu_{C}}}
    \right)
    \\
    \Ad{\rho}
    \left(
      \frac{\partial\left(\mu_{C}\lambda_{C}(\mu_{S})^{-1}(\lambda_{C})^{-1}\right)}
           {\partial{\lambda_{C}}}
    \right)
  \end{pmatrix}
  \\
  &=
  \Phi
  \begin{pmatrix}
    \Ad{\rho}
    \left(
      1-\mu_{C}\lambda_{C}(\mu_{C})^{-1}
    \right)
    \\
    \Ad{\rho}
    \left(
      \mu_{C}-\mu_{C}\lambda_{C}(\mu_{C})^{-1}(\lambda_{C})^{-1}
    \right)
  \end{pmatrix}
  \\
  &=
  \begin{pmatrix}
    I_3-M^{-1}LM
    \\
    M-L^{-1}M^{-1}LM
  \end{pmatrix}
  \\
  &=
  \begin{pmatrix}
    I_3-L \\
    M-I_3
  \end{pmatrix}
  \\
  &=
  \begin{pmatrix}
    1-\eta^{-2} &0&0          \\
    0           &0&0          \\
    0           &0&1-\eta^{2} \\
    \zeta^{-2}-1&0&0          \\
    0           &0&0          \\
    0           &0&\zeta^{2}-1
  \end{pmatrix},
\end{split}
\end{equation*}
where $I_3$ is the three by three identity matrix.
The differential $\partial_1^{S}$ is given by
\begin{equation*}
\begin{split}
  \partial_1^{S}
  &=
  \Phi
  \begin{pmatrix}
    \Ad{\rho}(\mu_{C}-1)&\Ad{\rho}(\lambda_{C}-1)
  \end{pmatrix}
  \\
  &=
  \begin{pmatrix}
    \zeta^{-2}-1&0&0          &\eta^{-2}-1&0&0         \\
    0           &0&0          &0          &0&0         \\
    0           &0&\zeta^{2}-1&0          &0&\eta^{2}-1
  \end{pmatrix}.
\end{split}
\end{equation*}
\par
Therefore we have
\begin{align*}
  \Ker\partial_2^{S}
  &=
  \left\langle
    \begin{pmatrix}
      0\\1\\0
    \end{pmatrix}
  \right\rangle,
  \\
  \Im\partial_2^{S}
  &=
  \left\langle
    \partial_2^{S}
      \begin{pmatrix}
        1\\0\\0
      \end{pmatrix},
    \partial_2^{S}
      \begin{pmatrix}
        0\\0\\1
      \end{pmatrix}
  \right\rangle,
  \\
  \Ker\partial_1^{S}
  &=
  \Im\partial_2^{S}
  \oplus
  \left\langle
    \begin{pmatrix}
      0\\1\\0\\0\\0\\0
    \end{pmatrix},
    \begin{pmatrix}
      0\\0\\0\\0\\1\\0
    \end{pmatrix}
  \right\rangle,
  \\
  \Im\partial_1^{S}
  &=
  \left\langle
    \begin{pmatrix}
      1\\0\\0
    \end{pmatrix},
    \begin{pmatrix}
      0\\0\\1
    \end{pmatrix}
  \right\rangle,
\end{align*}
and so
\begin{alignat*}{2}
  H_2(S;\rho)
  &=
  \C
  &\quad&\text{(generated by $[S]\otimes\begin{pmatrix}0\\1\\0\end{pmatrix}$)},
  \\
  H_1(S;\rho)
  &=
  \C^2
  &\quad&\text{(generated by 
  $\{\tilde{\mu}_{C}\otimes\begin{pmatrix}0\\1\\0\end{pmatrix},
      \tilde{\lambda}_{C}\otimes\begin{pmatrix}0\\1\\0\end{pmatrix}\}$)},
  \\
  H_0(S;\rho)
  &=
  \C
  &\quad&\text{(generated by $\tilde{v}\otimes\begin{pmatrix}0\\1\\0\end{pmatrix}$)}.
\end{alignat*}
Here $[S]\in H_2(S;\Z)$ is the fundamental class.
Note also that $\begin{pmatrix}0\\1\\0\end{pmatrix}$ is invariant under the adjoint action of any element of $\pi_1(S)$.
\par
Now we calculate the Reidemeister torsion of $S$.
From the calculation above.
If we put
\begin{align*}
  \tilde{\mathbf{h}}^{S}_2
  &:=
  \left\langle
    \begin{pmatrix}
      0\\1\\0
    \end{pmatrix}
  \right\rangle,
  \\
  \mathbf{b}^{S}_2
  &:=
  \left\langle
    \begin{pmatrix}1\\0\\0\end{pmatrix},
    \begin{pmatrix}0\\0\\1\end{pmatrix}
  \right\rangle,
  \\
  \tilde{\mathbf{h}}^{S}_1
  &:=
  \left\langle
    \begin{pmatrix}
      0\\1\\0\\0\\0\\0
    \end{pmatrix},
    \begin{pmatrix}
      0\\0\\0\\0\\1\\0
    \end{pmatrix}
  \right\rangle,
  \\
  \mathbf{b}^{S}_1
  &:=
  \left\langle
    \begin{pmatrix}1\\0\\0\\0\\0\\0\end{pmatrix},
    \begin{pmatrix}0\\0\\1\\0\\0\\0\end{pmatrix}
  \right\rangle,
  \\
  \tilde{\mathbf{h}}^{S}_0
  &:=
  \left\langle
    \begin{pmatrix}0\\1\\0\end{pmatrix}
  \right\rangle,
\end{align*}
we see that $\mathbf{b}^{S}_2\cup\tilde{\mathbf{h}}^{S}_2$ forms a basis of $C^{S}_2$, that $\mathbf{b}^{S}_1\cup\tilde{\mathbf{h}}^{S}_1\cup\partial^{S}_2(\mathbf{b}^{S}_2)$ forms a basis of $C^{S}_1$, and that $\tilde{\mathbf{h}}^{S}_0\cup\partial^{S}_1(\mathbf{b}^{S}_1)$ forms a basis of $C^{S}_0$.
Therefore the Reidemeister torsion of $S$ associated with these bases is
\begin{equation}\label{eq:torsion_S}
\begin{split}
  \Tor(S;\rho)
  &=\pm
  \frac{
  \begin{vmatrix}
    1&0&0&0&1-\eta^{-2} &0          \\
    0&0&1&0&0           &0          \\
    0&1&0&0&0           &1-\eta^{2} \\
    0&0&0&0&\zeta^{-2}-1&0          \\
    0&0&0&1&0           &0          \\
    0&0&0&0&0           &\zeta^{2}-1
  \end{vmatrix}}
  {
  \begin{vmatrix}
    1&0&0 \\
    0&0&1 \\
    0&1&0
  \end{vmatrix}
  \begin{vmatrix}
    0&\zeta^{-2}-1&0          \\
    1&0           &0          \\
    0&0           &\zeta^{2}-1
  \end{vmatrix}}
  \\
  &=
  \pm1.
\end{split}
\end{equation}
\par
Note that we use $[S]\otimes H$ as the generator of $H_2(S;\rho)$, $\{\tilde{\mu}_{C}\otimes H,\tilde{\lambda}_{C}\otimes H\}$ as the generator set of $H_1(S;\rho)$, and $\tilde{v}\otimes H$ as the generator of $H_0(S;\rho)$.
Here $H$ can be characterized as the vector that is invariant under the adjoint action of $\rho(\mu_{C})$ (and hence that of $\rho(\lambda_{C})$).
\par
From \eqref{eq:MV_torsion} we have
\begin{equation}\label{eq:MV_torsion2}
  \Tor(E;\rho)
  =
  \frac{\Tor(\pattern;\rho)}{\Tor(S;\rho)\Tor(\mathcal{H}_{\ast})}.
\end{equation}
\subsection{$\Im\rho_{C}$ is Abelian and $\Im\rho_{\pattern}$ is non-Abelian}\label{subsec:AN}
\par
Let $\rho$ be the representation defined in Subection~\ref{subsec:rep_AN}.
Then we have the following theorem.
\begin{thm}\label{thm:AN}
The Reidemeister torsion $\Tor(E;\rho)$ of $E$ twisted by the adjoint action of the representation $\rho$ associated with the meridian is given by
\begin{equation*}
  \Tor(E;\rho)
  =
  \pm
  \frac{(2b+1)(\omega_2^{2a+1}+\omega_2^{-2a-1})^2}{2(\omega_2^2-\omega_2^{-2})^2}.
\end{equation*}
\end{thm}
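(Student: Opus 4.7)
The overall strategy is to apply the Mayer--Vietoris torsion formula \eqref{eq:MV_torsion2} to the splitting $E=C\cup_S D$. Subsection~\ref{subsec:torus} already gives $\Tor(S;\rho)=\pm1$, so the task reduces to computing $\Tor(C;\rho_C)$, $\Tor(D;\rho_D)$, and $\Tor(\mathcal{H}_\ast)$ with bases that, after the quotient, produce the normalization $\{[\Sigma]\otimes V\}$ and $\{\mu\otimes V\}$ dictated by the main theorem.

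For the Abelian piece, since $\rho(x)=\rho(y)$ is conjugate by $\theta_1$ to the diagonal matrix with entries $\omega_2^{\pm1}$, the restriction $\rho_C$ is Abelian with meridian eigenvalue $\omega_2$. Proposition~\ref{prop:Alexander} then gives
\[
  \Tor(C;\rho_C)=\pm\left(\frac{\Delta(T(2,2a+1);\omega_2^2)}{\omega_2-\omega_2^{-1}}\right)^2,
\]
and using $\Delta(T(2,2a+1);t)=(t^{2a+1}+1)/(t+1)$ together with $\omega_2^{2b+1}=-1$ puts this factor in closed form in $\omega_2$. For $\Tor(D;\rho_D)$, I would use the presentation $\langle p,t\mid ptpt=tptp\rangle$ of $\pi_1(D)$ from \eqref{eq:pi1_pattern} (which identifies $D$ with the $(2,4)$ torus link complement). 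Since $\rho_D$ is irreducible, Fox calculus on the single relator yields an explicit $6\times3$ matrix $\partial_2^D$ and a $3\times6$ matrix $\partial_1^D$; I expect $H_0(D;\rho_D)=0$, while $H_1(D;\rho_D)$ and $H_2(D;\rho_D)$ are each two-dimensional, generated by classes coming from the two boundary tori $S$ and $\Sigma$ tensored with the respective invariant vectors in $\sl(2;\C)$. Choosing these as $\tilde{\mathbf{h}}$-bases and reading off $\mathbf{b}$-lifts from the Fox Jacobian gives $\Tor(D;\rho_D)$ as a ratio of two explicit determinants in $z$ and $\omega_2$.

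The combinatorial step is the evaluation of $\Tor(\mathcal{H}_\ast)$ from the Mayer--Vietoris exact sequence \eqref{eq:MV_exact}. Here I would use the generators of $H_\ast(S;\rho)$ determined in Subsection~\ref{subsec:torus} (the invariant vector $H$ tensored with $[S]$, $\tilde{\mu}_C$, $\tilde{\lambda}_C$, $\tilde{v}$), the Abelian generators $\tilde{\mu}_C\otimes H$ and $\tilde{v}\otimes H$ for $H_\ast(C;\rho)$, and the geometric generators for $H_\ast(D;\rho_D)$ chosen in the previous step. I then trace $i^C_\ast\oplus i^D_\ast$ and $j^C_\ast-j^D_\ast$ on these bases. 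The key algebraic input is that the meridian $\mu_C$ of $T(2,2a+1)$ maps to $\mu^2t^{-2b-1}$ up to the relations, so the longitude class $\tilde\lambda_C\otimes H$ and the $\tilde\mu_C\otimes H$ class do not map independently into $H_1(D;\rho_D)$; this degeneration is what produces the factor $(2b+1)$ and the $\omega_2^{2a+1}+\omega_2^{-2a-1}$ that appear in the statement, via the connecting homomorphism. The alternating product of the resulting basis-change determinants gives $\Tor(\mathcal{H}_\ast)$.

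Finally one substitutes into \eqref{eq:MV_torsion2}; the denominators $(\omega_2-\omega_2^{-1})^2$ from $\Tor(C;\rho_C)$ and the contributions of $z$ from $\Tor(D;\rho_D)$ should cancel against $\Tor(\mathcal{H}_\ast)$, leaving the clean expression in the theorem. The main obstacle is the $\Tor(\mathcal{H}_\ast)$ computation: one must match geometric bases across $S$, $C$, $D$, and $E$ consistently with the normalization $\{\mu\otimes V\}$, where $\mu=p$ is the cable meridian rather than the torus-knot meridian, and one must correctly identify which eigenline of $\rho(p)$ produces the invariant vector $V$ on the $E$-side. The delicate piece of this is verifying that the image of $\tilde\lambda_C\otimes H$ under $\varphi_1$ is non-trivial and computing it explicitly; once this is done the remaining determinants collapse to $(2b+1)(\omega_2^{2a+1}+\omega_2^{-2a-1})/\bigl((\omega_2-\omega_2^{-1})(\omega_2+\omega_2^{-1})\bigr)$ (up to sign), and squaring combined with the Abelian factor yields the claimed value.
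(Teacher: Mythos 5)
Your overall strategy is the paper's: split $E=C\cup_S D$, apply the Mayer--Vietoris torsion formula with $\Tor(S;\rho)=\pm1$, get $\Tor(C;\rho)$ from Proposition~\ref{prop:Alexander} with meridian eigenvalue $\omega_2$, compute $\Tor(\pattern;\rho)$ by Fox calculus on $\langle p,t\mid ptpt=tptp\rangle$, and evaluate $\Tor(\mathcal{H}_{\ast})$ on geometric bases. Your expectations for the twisted homology of $\pattern$ (trivial $H_0$, two-dimensional $H_1$ and $H_2$ carried by the two boundary tori) also match what the paper finds.

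However, the concluding synthesis does not close, for two concrete reasons. First, the homological input you invoke is wrong: $\mu_C=x=ptpt^{-1}$ abelianizes to $\mu^2$, not to $\mu^2t^{-2b-1}$; the $t$-exponent sits in $\lambda_C=t(ptpt^{-1})^{-b}\sim t\mu^{-2b}$. The degeneration that actually produces $2b+1$ is that $i^{C}_{\ast}$ annihilates $\tilde{\lambda}_{C}\otimes U$ (because $XU=U$ makes the Fox-derivative sum collapse to $0$), while $i^{\pattern}_{\ast}$ sends $\tilde{\mu}_{C}\otimes U$ and $\tilde{\lambda}_{C}\otimes U$ to $-2\,\tilde{t}\otimes U$ and $(2b+1)\,\tilde{t}\otimes U$; hence $\varphi_1$ is $\left(\begin{smallmatrix}1&0\\0&0\\-2&2b+1\end{smallmatrix}\right)$ and $\Tor(\mathcal{H}_{\ast})=\pm1/(2b+1)$. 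Second, and more seriously, you attribute the factor $\omega_2^{2a+1}+\omega_2^{-2a-1}$ to the Mayer--Vietoris determinant and then propose to square it and combine it with the Abelian factor. That double-counts: the entire $(\omega_2^{2a+1}+\omega_2^{-2a-1})^2/(\omega_2^2-\omega_2^{-2})^2$ is already supplied by $\Tor(C;\rho)=\pm\bigl(\Delta(T(2,2a+1);\omega_2^2)/(\omega_2-\omega_2^{-1})\bigr)^2$; the Mayer--Vietoris torsion enters the formula \eqref{eq:MV_torsion} to the first power and contributes only the scalar $2b+1$; and the remaining $2$ in the denominator, which your outline never accounts for, is $\Tor(\pattern;\rho)=\pm\tfrac12$ from the Fox-calculus determinant ratio. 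As written, your combination would yield $(2b+1)^2$ times the fourth power of the cosine factor and miss the $\tfrac12$, so the quantitative step must be redone even though the skeleton of the argument is correct.
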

\begin{cor}
Let $\rho^{\rm{AN}}_{\xi,j}$ be the representation $\rho$ with $z=\exp(\xi/2)$ and $\omega_2=\exp\left(\frac{(2j+1)\pi\sqrt{-1}}{2b+1}\right)$.
Then we have
\begin{equation*}
  \Tor(E;\rho^{\rm{AN}}_{\xi,j})
  =
  \pm
  \frac{2b+1}{2}
  \frac{\cos^{2}\left(\frac{(2a+1)(2j+1)\pi}{2b+1}\right)}{\sin^2\left(\frac{2(2j+1)\pi}{2b+1}\right)}
  =
  \pm\tau_1(\xi;j)^{-2}.
\end{equation*}
\end{cor}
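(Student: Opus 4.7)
The plan is to apply the Mayer--Vietoris multiplicativity~\eqref{eq:MV_torsion} to the decomposition $E=C\cup_{S}D$, combining $\Tor(S;\rho)=\pm 1$ (already established in~\eqref{eq:torsion_S}) with direct computations of $\Tor(C;\rho_{C})$, $\Tor(D;\rho_{D})$, and the torsion $\Tor(\mathcal{H}_{*})$ of the Mayer--Vietoris exact sequence~\eqref{eq:MV_exact}. For the first factor, $\rho_{C}$ is Abelian: after conjugating by $\theta_{1}$ and by the unipotent matrix that clears the $z^{-1}$ off-diagonal entry of $\begin{pmatrix}\omega_{2}&z^{-1}\\0&\omega_{2}^{-1}\end{pmatrix}$, the meridian is sent to $\diag(\omega_{2},\omega_{2}^{-1})$, so Proposition~\ref{prop:Alexander} applied with $z$ replaced by $\omega_{2}$ and $\Delta(T(2,2a+1);t)=(t^{2a+1}+1)/(t+1)$ gives
$$\Tor(C;\rho_{C})=\pm\frac{\omega_{2}^{4a}(\omega_{2}^{2a+1}+\omega_{2}^{-2a-1})^{2}}{(\omega_{2}^{2}-\omega_{2}^{-2})^{2}},$$
which already produces the characteristic shape of the target formula.

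Next I would compute $\Tor(D;\rho_{D})$ directly from the two-generator presentation $\pi_{1}(D)=\langle p,t\mid ptpt=tptp\rangle$ (under which $D$ is the $(2,4)$ torus-link complement). Using Fox calculus on $r=ptpt\cdot(tptp)^{-1}$ together with the explicit matrices for $\rho(p),\rho(t)$ from Subsection~\ref{subsec:rep_AN}, I would write down $\partial_{2}^{D}$ as a $3\times 6$ matrix and $\partial_{1}^{D}$ as $3\times 6$, identify $\Ker\partial_{1}^{D}$ and $\Image\partial_{2}^{D}$, and read off $H_{*}(D;\rho_{D})$. I would then select the geometric bases $[\Sigma]\otimes V$ for $H_{2}$ and $\tilde{\mu}\otimes V$ for $H_{1}$, where $V\in\sl(2;\C)$ is fixed by $\Ad\rho(\pi_{1}(\Sigma))$, and extract the appropriate $3\times 3$ minors; the factor $(2b+1)$ is expected to appear here through the $b$-th power structure of $\rho(t)$ combined with $\omega_{2}^{2b+1}=-1$.

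The most delicate ingredient is $\Tor(\mathcal{H}_{*})$. I would write the matrices of $\varphi_{*}$, $\psi_{*}$, and the connecting homomorphisms $\delta_{*}$ with respect to the preferred bases $\{[S]\otimes H,\tilde{\mu}_{C}\otimes H,\tilde{\lambda}_{C}\otimes H,\tilde{v}\otimes H\}$ on the $S$-side, their push-forwards to the $C$- and $D$-sides under the respective diagonalizing conjugations by $\theta_{1}$ and the unipotent correction, and $\{[\Sigma]\otimes V,\tilde{\mu}\otimes V,\tilde{v}\otimes V\}$ on the $E$-side. The key geometric observation is that the vector $V$, invariant under the \emph{entire} peripheral subgroup $\pi_{1}(\Sigma)$, is a specific scalar multiple of the meridian-invariant vector $H$ on each side, and the discrepancy between these multiples on the $C$- and $D$-sides is governed by the eigenvalues of $\rho(\lambda_{C})=\tilde{\theta}_{1}^{-1}\diag(-\omega_{2}^{-4a-2},-\omega_{2}^{4a+2})\tilde{\theta}_{1}$, producing the factor $(\omega_{2}^{2a+1}+\omega_{2}^{-2a-1})$ up to a power of $\omega_{2}$.

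The main obstacle will be arranging these three linear-algebraic computations so that the $z$-dependence present in $\Tor(D;\rho_{D})$ and in the base-change matrices of $\mathcal{H}_{*}$ cancels against the spurious $z$-dependence implicit in the conjugation used to reduce $\rho_{C}$ to diagonal form, leaving only $\omega_{2}$-dependence visible in the final answer. This cancellation should be forced by the fact that $V$ is invariant under all of $\pi_{1}(\Sigma)$ (not merely under the meridian), but tracking it rigorously requires carrying all three chain-level computations in parallel and keeping careful account of the signs and unit factors arising from each basis change. Once verified, substituting into~\eqref{eq:MV_torsion} and applying $\omega_{2}^{2b+1}=-1$ with $\omega_{2}\ne\pm 1$ yields the claimed formula $\pm(2b+1)(\omega_{2}^{2a+1}+\omega_{2}^{-2a-1})^{2}/[2(\omega_{2}^{2}-\omega_{2}^{-2})^{2}]$.
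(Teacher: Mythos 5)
Your overall strategy --- the Mayer--Vietoris formula \eqref{eq:MV_torsion} with $\Tor(S;\rho)=\pm1$, Proposition~\ref{prop:Alexander} for the Abelian piece $C$, and Fox calculus on $\langle p,t\mid ptpt=tptp\rangle$ for $\pattern$ --- is exactly the paper's, but several of the concrete mechanisms you predict are wrong and would derail the computation. First, in this case $\rho_{C}$ is Abelian and $\lambda_{C}$ is null-homologous, so $\rho(\lambda_{C})=I_{2}$; the formula $\tilde{\theta}_1^{-1}\diag(-\omega_2^{-4a-2},-\omega_2^{4a+2})\tilde{\theta}_1$ you invoke belongs to the cases where $\Im\rho_C$ is non-Abelian. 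Consequently the factor $(\omega_2^{2a+1}+\omega_2^{-2a-1})^{2}$ comes \emph{entirely} from $\Tor(C;\rho)$ via $\Delta(T(2,2a+1);\omega_2^2)$; if the Mayer--Vietoris torsion contributed another such factor, as your third paragraph claims, the final answer would carry that factor to a higher power than Theorem~\ref{thm:AN} allows. In fact $\Tor(\mathcal{H}_{*})=\pm1/(2b+1)$ is purely numerical: the $2b+1$ arises from $i^{\pattern}_{\ast}(\tilde{\lambda}_{C}\otimes U)=(2b+1)\,\tilde{t}\otimes U$ in $H_1(\pattern;\rho)$, not from $\Tor(\pattern;\rho)$, which equals $\pm1/2$ and supplies only the $2$ in the denominator.

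Second, you under-count the twisted homology of $\pattern$: since $\pattern$ has two boundary tori and $\rho_{\pattern}$ is non-Abelian, $H_{2}(\pattern;\rho)\cong H_{1}(\pattern;\rho)\cong\C^{2}$, generated by $\{i^{\pattern}_{\ast}([S]\otimes U),\,i^{\pattern}_{\ast}([\Sigma]\otimes V)\}$ and by $\{\tilde{p}\otimes V,\,\tilde{t}\otimes U\}$ respectively, where $U$ is invariant under the adjoint action of $\pi_1(S)$ and $V$ under that of $\pi_1(\Sigma)$; these two vectors are \emph{not} proportional. Your single basis vectors $[\Sigma]\otimes V$ and $\tilde{\mu}\otimes V$ do not span these groups, so neither $\Tor(\pattern;\rho)$ nor the Mayer--Vietoris bookkeeping is well posed as you set it up. Finally, a small point: Proposition~\ref{prop:Alexander} uses the normalized Alexander polynomial, and substituting $(t^{2a+1}+1)/(t+1)$ instead introduces the spurious unit $\omega_2^{4a}$ in your expression for $\Tor(C;\rho)$, which is not $\pm1$ and does not cancel elsewhere. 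With these corrections the pieces are $\Tor(C;\rho)=\pm(\omega_2^{2a+1}+\omega_2^{-2a-1})^2/(\omega_2^2-\omega_2^{-2})^2$, $\Tor(\pattern;\rho)=\pm1/2$, $\Tor(S;\rho)=\pm1$, and $\Tor(\mathcal{H}_\ast)=\pm1/(2b+1)$, so that \eqref{eq:MV_torsion} yields Theorem~\ref{thm:AN}, from which the corollary follows by setting $\omega_2=\exp\bigl((2j+1)\pi\sqrt{-1}/(2b+1)\bigr)$.
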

The rest of this subsection will be devoted to the proof of Theorem~\ref{thm:AN}.
Note that this may follow from \cite[Theorem~3.7]{Kirk/Livingston:TOPOL1999} but for later use we will give a proof from scratch.
\subsubsection{Reidemeister torsion of $S$}
From \eqref{eq:torsion_S} we know that $\Tor(S;\rho)=\pm1$.
From the calculation there the associated bases for $H_{i}(S;\rho)$ ($i=0,1,2$) are as follows.
\par
Since we calculate
\begin{align*}
  \begin{pmatrix}\omega_2&z^{-1}\\0&\omega_2^{-1}\end{pmatrix}^{-1}
  E
  \begin{pmatrix}\omega_2&z^{-1}\\0&\omega_2^{-1}\end{pmatrix}
  &=
  \begin{pmatrix}
    0&\omega_2^{-2}\\
    0&0
  \end{pmatrix},
  \\
  \begin{pmatrix}\omega_2&z^{-1}\\0&\omega_2^{-1}\end{pmatrix}^{-1}
  H
  \begin{pmatrix}\omega_2&z^{-1}\\0&\omega_2^{-1}\end{pmatrix}
  &=
  \begin{pmatrix}
    1&2\omega_2^{-1}z^{-1}\\
    0&-1
  \end{pmatrix},
  \\
  \begin{pmatrix}\omega_2&z^{-1}\\0&\omega_2^{-1}\end{pmatrix}^{-1}
  F
  \begin{pmatrix}\omega_2&z^{-1}\\0&\omega_2^{-1}\end{pmatrix}
  &=
  \begin{pmatrix}
    -\omega_2z^{-1}&-z^{-2}\\
    \omega_2^{2}&\omega_2z^{-1}
  \end{pmatrix},
  \\
  \rho(\theta_1)^{-1}E\rho(\theta_1)
  &=
  \begin{pmatrix}
    -z^{-1}+\omega_2^{-1}z      &1 \\
    -(\omega_2^{-1}z-z^{-1})^{2}&z^{-1}-\omega_2^{-1}z
  \end{pmatrix},
  \\
  \rho(\theta_1)^{-1}H\rho(\theta_1)
  &=
  \begin{pmatrix}
    1                     &0 \\
    2z^{-1}-2\omega_2^{-1}z&-1
  \end{pmatrix},
  \\
  \rho(\theta_1)^{-1}F\rho(\theta_1)
  &=
  \begin{pmatrix}
    0&0 \\
    1&0
  \end{pmatrix},
\end{align*}
the adjoint action of $\rho(\mu_{C})=\rho(x)$ is given by the multiplication from the left of the following three by three matrix:
\begin{equation*}
  X:=
  \Theta_1
  \begin{pmatrix}
    \omega_2^{-2}&2\omega_2^{-1}z^{-1}&-z^{-2} \\
    0            &1                   &-\omega_2z^{-1}\\
    0            &0                   &\omega_2^2
  \end{pmatrix}
  \Theta_1^{-1}
\end{equation*}
with
\begin{equation*}
  \Theta_1
  :=
  \begin{pmatrix}
    1                           &0                      &0 \\
    -z^{-1}+\omega_2^{-1}z      &1                      &0 \\
    -(\omega_2^{-1}z-z^{-1})^{2}&2z^{-1}-2\omega_2^{-1}z&1
  \end{pmatrix}.
\end{equation*}
Note that the adjoint action of $[\lambda_C]$ is trivial.
\par
If we put
\begin{equation*}
  U
  :=
  \Theta_1\begin{pmatrix}2\\(\omega_2-\omega_2^{-1})z\\0\end{pmatrix}
  =
  \begin{pmatrix}
    2 \\
    z(\omega_2+\omega_2^{-1})-2z^{-1} \\
    2(\omega_2+\omega_2^{-1}-z^2-z^{-2})
  \end{pmatrix},
\end{equation*}
then $U$ is invariant under $X$ and so it is invariant under the adjoint action of any element in $\pi_1(S)$.
Therefore we have
\begin{itemize}
\item
$H_2(S;\rho)=\C$ is generated by $\{[S]\otimes U\}$,
\item
$H_1(S;\rho)=\C^2$  is generated by $\{\tilde{\mu}_{C}\otimes U,\tilde{\lambda}_{C}\otimes U\}$, and
\item
$H_0(S;\rho)=\C$ is generated by $\{\tilde{v}\otimes U$\}.
\end{itemize}
\subsubsection{Reidemeister torsion of $C$}
Since $\Delta(T(2,2a+1)^{(2,2b+1)})=\dfrac{(t^{2a+1}-t^{-(2a+1)})(t^{1/2}-t^{-1/2})}{(t-t^{-1})(t^{(2a+1)/2}-t^{-(2a+1)/2})}$ (see for example \cite{Lickorish:1997}) and the eigenvalues of the image of $\rho(\mu_{C})$ are $\omega_2^{\pm1}\ne\pm1$, we have
\begin{equation*}
\begin{split}
  \Tor(C;\rho)
  &=
  \pm
  \frac{1}{\left(\omega_2-\omega_2^{-1}\right)^2}
  \left(
    \frac{\left(\omega_2^{2(2a+1)}-\omega_2^{-2(2a+1)}\right)\left(\omega_2-\omega_2^{-1}\right)}
         {\left(\omega_2^2-\omega_2^{-2}\right)\left(\omega_2^{2a+1}-\omega_2^{-(2a+1)}\right)}
  \right)^2
  \\
  &=
  \pm
  \frac{1}{\left(\omega_2-\omega_2^{-1}\right)^2}
  \left(\frac{\omega_2^{2a+1}+\omega_2^{-2a-1}}{\omega_2+\omega_2^{-1}}\right)^2
\end{split}
\end{equation*}
from Proposition~\ref{prop:Alexander}.
From its proof we also have
\begin{alignat*}{2}
  H_2(C;\rho)
  &=
  \{0\},
  &&
  \\
  H_1(C;\rho)
  &=
  \C
  &\quad&\text{(generated by $\tilde{x}\otimes U$)},
  \\
  H_0(C;\rho)
  &=
  \C
  &\quad&\text{(generated by $\tilde{v}\otimes U$)}
\end{alignat*}
since $U$ is invariant under the adjoint action of $\rho(\mu_{C})$.
\subsubsection{Reidemeister torsion of $\pattern$}\label{subsubsec:AN_D}
We regard $\pattern$ as a cell complex with one $0$-cell $v$, two $1$-cells $p$ and $t$, and one $2$-cell $f$.
Note that $f$ is attached to the bouquet $v\cup p \cup t$ so that its boundary is identified with $ptptp^{-1}t^{-1}p^{-1}t^{-1}$.
\par
The adjoint actions of $\rho(p)$ and $\rho(t)$ are given by
\begin{equation*}
  P
  :=
  \begin{pmatrix}
    z^{-2}&2z^{-1}&-1 \\
    0     &1      &-z \\
    0     &0      &z^2
  \end{pmatrix}
  =
  \Theta_1
  \begin{pmatrix}P_1&P_2&P_3\end{pmatrix}
  \Theta_1^{-1}
\end{equation*}
with
\begin{align*}
  P_1
  &:=
  \begin{pmatrix}
    \omega_2^{-2}z^2\\
    \omega_2^{-3}(\omega_2-1)(z^2-\omega_2)z\\
    -\omega_2^{-4}(\omega_2-1)^2(z^2-\omega_2)^2
  \end{pmatrix},
  \\
  P_2
  &:=
  \begin{pmatrix}
    2\omega_2^{-1}z \\
    \omega_2^{-2}(2(\omega_2-1)z^2-\omega_2(\omega_2-2))\\
    -2\omega_2^{-3}(\omega_2-1)(z^2-\omega_2)((\omega_2-1)z^2+\omega_2)z^{-1}
  \end{pmatrix},
  \\
  P_3
  &:=
  \begin{pmatrix}
    -1 \\
    -z-z^{-1}+\omega_2^{-1}z \\
    (z+z^{-1}-\omega_2^{-1}z)^2
  \end{pmatrix}
\end{align*}
and
\begin{equation*}
\begin{split}
  T
  &:=
  \Theta_1
  \begin{pmatrix}
    \omega_2^{-2b}&\frac{2\omega_2(1-\omega_2^{-2b})}{(\omega_2^{2}-1)z}
      &-\frac{\omega_2^{2-2b}(\omega_2^{2b}-1)^2}{(\omega_2^2-1)^2z^2}\\
    0&1&-\frac{\omega_2(\omega_2^{2b}-1)}{(\omega_2^2-1)z} \\
    0&0&\omega_2^{2b}
  \end{pmatrix}
  \Theta_1^{-1}
  \\
  &=
  \Theta_1
  \begin{pmatrix}
    -\omega_2&\frac{2\omega_2}{(\omega_2-1)z}&\frac{\omega_2}{(\omega_2-1)^2z^2} \\
    0        &1                            &\frac{1}{(\omega_2-1)z} \\
    0        &0                            &-\omega_2^{-1}
  \end{pmatrix}
  \Theta_1^{-1}
\end{split}
\end{equation*}
since $\omega_2^{2b+1}=-1$.
\par
The differential $\partial_2$ is given by the following six by three matrix.
\begin{equation*}
\begin{split}
  &
  \partial_2^{\pattern}
  \\
  =&
  \Phi
  \begin{pmatrix}
    \Ad{\rho}\left(\frac{\partial(ptptp^{-1}t^{-1}p^{-1}t^{-1})}{\partial p}\right)
    \\
    \Ad{\rho}\left(\frac{\partial(ptptp^{-1}t^{-1}p^{-1}t^{-1})}{\partial t}\right)
  \end{pmatrix}
  \\
  =&
  \Phi
  \begin{pmatrix}
    \Ad{\rho}(1+pt-ptptp^{-1}-ptptp^{-1}t^{-1}p^{-1})
    \\
    \Ad{\rho}(p+ptp-ptptp^{-1}t^{-1}-ptptp^{-1}t^{-1}p^{-1}t^{-1})
  \end{pmatrix}
  \\
  =&
  \begin{pmatrix}
    I_3+TP-TPT-T \\
    P+PTP-PT-I_3
  \end{pmatrix}
  \\
  =&
  \begin{pmatrix}\Theta_1&O_3\\O_3&\Theta_1\end{pmatrix}
  \\&
  \times
  \diag
  \bigl(
    \omega_2^{-1}((\omega_2-1)^2z^2-\omega_2)z^{-2},
    \omega_2^{-2}(\omega_2-1)((\omega_2-1)z^2+\omega_2)z^{-1},
  \\
  &\phantom{=\times\diag(}
    \omega_2^{-3}(\omega_2-1)^2(z^2-\omega_2),1,\omega_2^{-1}(\omega_2-1)z,
    \omega_2^{-2}(\omega_2-1)^2(z^2-\omega_2)\bigr)
  \\
  &\times
  \begin{pmatrix}
    -1& 2& 1\\
     1&-2&-1\\
     1&-2&-1\\
     1&-2&-1\\
     1&-2&-1\\
    -1& 2& 1\\
  \end{pmatrix}
  \\
  &\times
  \diag
  \left(
    \frac{(\omega_2+1)(z^2-\omega_2)}{\omega_2^2},
    \frac{z^2-\omega_2}{\omega_2(\omega_2-1)z},
    \frac{(\omega_2^2-\omega_2+1)z^2-\omega_2}{(\omega_2-1)^2z^2}
  \right)
  \Theta_1^{-1},
\end{split}
\end{equation*}
where $\diag(c_1,c_2,\dots,c_n)$ denotes the diagonal matrix with diagonal entries $c_1,c_2,\dots,c_n$.
The differential $\partial_1^{\pattern}$ is given by the following three by six matrix.
\begin{equation*}
\begin{split}
  &\partial_1^{\pattern}
  \\
  =&
  \Phi
  \begin{pmatrix}
    \Ad{\rho}(p-1)&\Ad{\rho}(t-1)
  \end{pmatrix}
  \\
  =&
  \Theta_1
  \\
  \times&
  \left(
  \begin{array}{c:c:}
    \omega_2^{-2}z^2-1 &2\omega_2^{-1}z \\
    \omega_2^{-3}(\omega_2-1)(z^2-\omega_2)z&\omega_2^{-2}(2(\omega_2-1)z^2-\omega_2(\omega_2-2))-1\\
    -\omega_2^{-4}(\omega_2-1)^2(z^2-\omega_2)^2&-2\omega_2^{-3}(\omega_2-1)(z^2-\omega_2)((\omega_2-1)z^2+\omega_2)z^{-1}
  \end{array}
  \right.
  \\
  &\hspace{36mm}
  \left.
  \begin{array}{:c:c:c:c}
    -1&-\omega_2-1&\frac{2\omega_2}{\omega_2z-z}&\frac{\omega_2}{(\omega_2-1)^2z^2} \\
    -z-z^{-1}+\omega_2^{-1}z&0        &0                            &\frac{1}{(\omega_2-1)z} \\
    (z+z^{-1}-\omega_2^{-1}z)^2-1&0        &0                            &-\omega_2^{-1}-1
  \end{array}
  \right)
  \\
  \times&
  \begin{pmatrix}\Theta_1^{-1}&O_3\\O_3&\Theta_1^{-1}\end{pmatrix}.
\end{split}
\end{equation*}
Therefore we see that $\partial_1^{\pattern}$ is surjective and
\begin{align*}
  \Ker\partial_2^{\pattern}
  &=
  \left\langle
    \Theta_1
    \begin{pmatrix}
      2\\(\omega_2-\omega_2^{-1})z\\0
    \end{pmatrix},
    \Theta_1
    \begin{pmatrix}
      2\\
      z+z^{-1}-2\omega_2^{-1}z\\
      2\omega_2^{-2}(\omega_2-1)(z^2-\omega_2)
    \end{pmatrix}
  \right\rangle,
  \\
  \Im\partial_2^{\pattern}
  &=
  \left\langle
    \begin{pmatrix}\Theta_1&O_3\\O_3&\Theta_1\end{pmatrix}
    \begin{pmatrix}
      -\omega_2^{-1}((\omega_2-1)^2z^2-\omega_2)z^{-2} \\
      \omega_2^{-2}(\omega_2-1)((\omega_2-1)z^2+\omega_2)z^{-1} \\
      \omega_2^{-3}(\omega_2-1)^2(z^2-\omega_2) \\
      1 \\
      \omega_2^{-1}(\omega_2-1)z \\
      -\omega_2^{-2}(\omega_2-1)^2(z^2-\omega_2)
    \end{pmatrix}
  \right\rangle,
  \\
  \Ker\partial_1^{\pattern}
  &=
  \Im\partial_2^{\pattern}
  \\
  &\,\oplus
  \left\langle
    \begin{pmatrix}\Theta_1&O_3\\O_3&\Theta_1\end{pmatrix}
    \begin{pmatrix}
      2 \\
      z+z^{-1}-2\omega_2^{-1}z\\
      2\omega_2^{-2}(\omega_2-1)(z^2-\omega_2) \\
      0 \\
      0 \\
      0
    \end{pmatrix},
    \begin{pmatrix}\Theta_1&O_3\\O_3&\Theta_1\end{pmatrix}
    \begin{pmatrix}
      0\\0\\0\\2\\(\omega_2-\omega_2^{-1})z\\0
    \end{pmatrix}
  \right\rangle.
\end{align*}
\par
Putting $U:=
    \Theta_1
    \begin{pmatrix}
      2\\(\omega_2-\omega_2^{-1})z\\0
    \end{pmatrix}$
as before and $V:=
    \Theta_1
    \begin{pmatrix}
      2 \\
      z+z^{-1}-2\omega_2^{-1}z\\
      2\omega_2^{-2}(\omega_2-1)(z^2-\omega_2) \\
    \end{pmatrix}
    =
    \begin{pmatrix}
      2 \\
      z-z^{-1} \\
      0
    \end{pmatrix}$,
we have
\begin{alignat*}{2}
  H_2(\pattern;\rho)
  &=
  \C^2
  &\quad&\text{(generated by $\tilde{f}\otimes U$ and $\tilde{f}\otimes V]$)},
  \\
  H_1(\pattern;\rho)
  &=
  \C^2
  &\quad&\text{(generated by $\tilde{p}\otimes V$ and $\tilde{t}\otimes U$)},
  \\
  H_0(\pattern;\rho)
  &=
  \{0\}.
  &&
\end{alignat*}
Note that the vector $U$ is invariant under the adjoint actions of $\rho(\mu_C)=\rho(x)$ and $\rho(\lambda_C)=I_2$, and that $V$ is invariant under the adjoint actions of $\rho(\mu)=\rho(p)$ and $\rho(\lambda)=\rho(\lambda_{C}x^{b}pq^{-b}\lambda_{C}x^{b}p^{-3b-1})$, where $\mu$ and $\lambda$ are the meridian and the longitude of $T(2,2a+1)^{(2,2b+1)}$.
The latter is because the adjoint action of $\rho(\lambda)$ is given by
\begin{equation*}
  \begin{pmatrix}
    z^{8b+4}&-\frac{2(z^{8b+4}-1)}{z-z^{-1}}&-\frac{(z^{4b+2}-z^{-4b-2})^2}{(z-z^{-1})^2} \\
    0       &1                              &\frac{1-z^{-8b-4}}{z-z^{-1}} \\
    0       &0                              &z^{-8b-4}
  \end{pmatrix}.
\end{equation*}
\par
We want to study topological interpretation of the generators $\tilde{f}\otimes U$ and $\tilde{f}\otimes V$.
To do that we calculate $i^{\pattern}_{\ast}([S]\otimes U)$ and $i_{\ast}([\Sigma]\otimes V)$, where $\Sigma$ is the boundary component of $\pattern$ other than $S$ and $i\colon\Sigma\to\pattern$ is the inclusion.
\par
As described in Subsection~\ref{subsec:torus}, $[S]\otimes U$ generates $H_2(S;\rho)$.
Here $[S]$ is presented by the $2$-cell whose boundary is attached to
\begin{equation*}
  \mu_{C}\lambda_{C}\mu_{C}^{-1}\lambda_{C}^{-1}
  =
  x\cdot r\cdot x^{-1}\cdot r^{-1}
  =
  ptptp^{-1}t^{-1}p^{-1}t^{-1}.
\end{equation*}
Since this coincides with the boundary of $f$, we have
\begin{equation*}
  i^{\pattern}_{\ast}([S]\otimes U)
  =
  \tilde{f}\otimes U.
\end{equation*}
\par
Next we calculate $i_{\ast}([\Sigma]\otimes V)$.
As in the previous case, $[\Sigma]\otimes V$ generates $H_2(\Sigma;\rho)$ and $[\Sigma]$ is presented by the $2$-cell whose boundary is attached to $\mu\lambda\mu^{-1}\lambda^{-1}$.
Since $\lambda=tptp^{-1}$ from Figure~\ref{fig:2_4}, we have
\begin{equation*}
\begin{split}
  \mu\lambda\mu^{-1}\lambda^{-1}
  &=
  p\cdot tptp^{-1}\cdot p^{-1}\cdot\left(tptp^{-1}\right)^{-1}
  \\
  &=
  ptptp^{-1}t^{-1}p^{-1}t^{-1},
\end{split}
\end{equation*}
which coincides with the boundary of $f$.
So we also have $i^{\pattern}_{\ast}([\Sigma]\otimes V)=\tilde{f}\otimes V$.
Therefore we conclude that $H_2(\pattern;\rho)$ is generated by $i^{\pattern}_{\ast}([S]\otimes U)$ and $i^{\pattern}_{\ast}([\Sigma]\otimes V)$.
\par
Now we calculate the Reidemeister torsion of $\pattern$.
\par
From the calculation above if we put
\begin{align*}
  \tilde{\mathbf{h}}^{\pattern}_2
  &:=
  \left\langle
    \Theta_1
    \begin{pmatrix}
      2\\(\omega_2-\omega_2^{-1})z\\0
    \end{pmatrix},
    \Theta_1
    \begin{pmatrix}
      2 \\
      z+z^{-1}-2\omega_2^{-1}z \\
      2\omega_2^{-2}(\omega_2-1)(z^2-\omega_2)
  \end{pmatrix}
  \right\rangle,
  \\
  \mathbf{b}^{\pattern}_2
  &:=
  \left\langle
    \Theta_1
    \begin{pmatrix}
      \frac{\omega_2^2}{(\omega_2+1)(z^2-\omega_2)}\\0\\0
    \end{pmatrix}
  \right\rangle,
  \\
  \tilde{\mathbf{h}}^{\pattern}_1
  &:=
  \left\langle
    \begin{pmatrix}\Theta_1&O_3\\O_3&\Theta_1\end{pmatrix}
    \begin{pmatrix}
      2 \\
      z+z^{-1}-2\omega_2^{-1}z \\
      2\omega_2^{-2}(\omega_2-1)(z^2-\omega_2)\\
      0 \\
      0 \\
      0
    \end{pmatrix},
    \begin{pmatrix}\Theta_1&O_3\\O_3&\Theta_1\end{pmatrix}
    \begin{pmatrix}
      0\\0\\0\\2\\(\omega_2-\omega_2^{-1})z\\0
    \end{pmatrix}
  \right\rangle,
  \\
  \mathbf{b}^{\pattern}_1
  &:=
  \left\langle
    \begin{pmatrix}
    \Theta_1&O_3\\
    O_3     &\Theta_1
    \end{pmatrix}
    \begin{pmatrix}0\\1\\0\\0\\0\\0\end{pmatrix},
    \begin{pmatrix}
    \Theta_1&O_3\\
    O_3     &\Theta_1
    \end{pmatrix}
    \begin{pmatrix}0\\0\\0\\0\\1\\0\end{pmatrix},
    \begin{pmatrix}
    \Theta_1&O_3\\
    O_3     &\Theta_1
    \end{pmatrix}
    \begin{pmatrix}0\\0\\0\\0\\0\\1\end{pmatrix}
  \right\rangle,
\end{align*}
we see that $\tilde{\mathbf{h}}^{\pattern}_2\cup\mathbf{b}^{\pattern}_2$ forms a basis of $C^{\pattern}_2$, that $\tilde{\mathbf{h}}^{\pattern}_1\cup\mathbf{b}^{\pattern}_1\cup\partial^{\pattern}_2(\mathbf{b}^{\pattern}_2)$ forms a basis of $C^{\pattern}_1$, and that $\partial^{\pattern}_1(\mathbf{b}^{\pattern}_1)$ forms a basis of $C^{\pattern}_0$.
Therefore the Reidemeister torsion of $\pattern$ associated with these bases is
\begin{equation*}
\begin{split}
  &\Tor(\pattern;\rho)
  \\
  =&
  \pm
  \det\begin{pmatrix}\Theta_1&O_3\\O_3&\Theta_1\end{pmatrix}
  \\
  &\times
  \begin{vmatrix}
   2                                       &0&0&0&0&-\omega_2^{-1}((\omega_2-1)^2z^2-\omega_2)z^{-2}         \\
   z+z^{-1}-2\omega_2^{-1}z                &0&1&0&0&\omega_2^{-2}(\omega_2-1)((\omega_2-1)z^2+\omega_2)z^{-1}\\
   2\omega_2^{-2}(\omega_2-1)(z^2-\omega_2)&0&0&0&0&\omega_2^{-3}(\omega_2-1)^2(z^2-\omega_2)                \\
   0                                       &2&0&0&0&1                                                        \\
   0&(\omega_2-\omega_2^{-1})z               &0&1&0&\omega_2^{-1}(\omega_2-1)z                               \\
   0                                       &0&0&0&1&-\omega_2^{-2}(\omega_2-1)^2(z^2-\omega_2)               
  \end{vmatrix}
  \\
  &\times
  \left(
  \det\Theta_1
  \begin{vmatrix}
    2                        &2                                       &\frac{\omega_2^2}{(\omega_2+1)(z^2-\omega_2)}\\
    (\omega_2-\omega_2^{-1})z&z+z^{-1}-2\omega_2^{-1}z                &0\\
    0                        &2\omega_2^{-2}(\omega_2-1)(z^2-\omega_2)&0
  \end{vmatrix}
  \right)^{-1}
  \\
  &\times
  \left(
  \det\Theta_1
  \begin{vmatrix}
    2\omega_2^{-1}z&\frac{2\omega_2}{\omega_2z-z}&\frac{\omega_2}{(\omega_2-1)^2z^2} \\
    \omega_2^{-2}(2(\omega_2-1)z^2-\omega_2(\omega_2-2))-1&0&\frac{1}{(\omega_2-1)z} \\
    -2\omega_2^{-3}(\omega_2-1)(z^2-\omega_2)((\omega_2-1)z^2+\omega_2)z^{-1}\hspace{-3mm}&0&-\omega_2^{-1}-1
  \end{vmatrix}
  \right)^{-1}
  \\
  =&
  \pm
  \frac{1}{2}.
\end{split}
\end{equation*}
\subsubsection{Reidemeister torsion of the Mayer--Vietoris sequence}
From \eqref{eq:MV_torsion2} and the calculation above, we have
\begin{equation}\label{eq:MV_torsion_AN}
  \Tor(E;\rho)
  =
  \frac{\Tor(C;\rho)\Tor(\pattern;\rho)}{\Tor(\mathcal{H}_{\ast})}
  =
  \pm
  \frac{(\omega_2^{2a+1}+\omega_2^{-2a-1})^2}{2(\omega_2^2-\omega_2^{-2})^2\Tor(\mathcal{H}_{\ast})}.
\end{equation}
\par
We calculate the torsion of the Mayer--Vietoris exact sequence with respect to the bases that we have calculated:
\begin{align*}
  H_2(S;\rho)
  &=
  \left\langle
    [S]\otimes U
  \right\rangle,
  \\
  H_1(S;\rho)
  &=
  \left\langle
    \tilde{\mu}_{C}\otimes U,\tilde{\lambda}_{C}\otimes U
  \right\rangle,
  \\
  H_0(S;\rho)
  &=
  \left\langle
    \tilde{v}\otimes U
  \right\rangle,
  \\
  H_1(C;\rho)
  &=
  \left\langle
    \tilde{x}\otimes U
  \right\rangle,
  \\
  H_0(C;\rho)
  &=
  \left\langle
    \tilde{v}\otimes U
  \right\rangle,
  \\
  H_2(\pattern;\rho)
  &=
  \left\langle
    i^{\pattern}_{\ast}([S]\otimes U), i^{\pattern}_{\ast}([\Sigma]\otimes V)
  \right\rangle,
  \\
  H_1(\pattern;\rho)
  &=
  \left\langle
    \tilde{p}\otimes V, \tilde{t}\otimes U
  \right\rangle.
\end{align*}
So the exact sequence becomes
\begin{equation*}
\begin{CD}
  @>\delta_3>>\C
  @>\varphi_2>>\C^2
  @>\psi_2>>H_2(E;\rho)
  \\
  @>\delta_2>>\C^2
  @>\varphi_1>>\C\oplus\C^2
  @>\psi_1>>H_1(E;\rho)
  \\
  @>\delta_1>>\C
  @>\varphi_0>>\C
  @>\psi_0>>H_0(E;\rho).
\end{CD}
\end{equation*}
\par
We will study the homomorphisms $\varphi_k$ ($k=0,1,2$).
\begin{itemize}
\item
$\varphi_2$:
We know that $i^{C}_{\ast}$ sends $[S]\otimes U\in H_2(S;\rho)$ to $i^{\pattern}_{\ast}([S]\otimes U)\in H_2(\pattern;\rho)$, that $[S]\otimes U$ generates $H_2(S;\rho)$, and that $i^{\pattern}_{\ast}([S]\otimes U)$ is one of the generators of $H_2(\pattern;\rho)$.
Therefore we conclude that $\varphi_2$ is injective.
\item
$\varphi_0$:
We can also see that $i^{C}_{\ast}$ sends the generator $\tilde{v}\otimes U$ of $H_0(S;\rho)$ to the generator $\tilde{v}\otimes U$ of $H_0(C;\rho)$, and so $\varphi_0$ is an isomorphism.
\item
$\varphi_1$:
Since $\mu_{C}=x$ and $\lambda_{C}=y(xy)^{2a}x^{-4a-1}$ from \S~\ref{sec:pi1}, we see that $i^{C}_{\ast}$ sends $\tilde{\mu}_{C}\otimes U$ to $\tilde{x}\otimes U\in H_1(C;\rho)$ and $\tilde{\lambda}_{C}\otimes U$ to
\begin{equation*}
\begin{split}
  &i^{C}_{\ast}(\tilde{\lambda}_{C}\otimes U)
  \\
  =&
  \left(\frac{\partial(y(xy)^{2a}x^{-4a-1})}{\partial\,x}\tilde{x}\right)\otimes U
  +
  \left(\frac{\partial(y(xy)^{2a}x^{-4a-1})}{\partial\,y}\tilde{y}\right)\otimes U
  \\
  =&
  \tilde{x}\otimes
  \left(
    \sum_{k=0}^{2a-1}(YX)^{k}Y
    -
    \left(\sum_{k=1}^{4a+1}X^{-k}\right)(YX)^{2a}Y
  \right)U
  +
  \tilde{y}\otimes
  \left(
    \sum_{k=0}^{2a}(XY)^{k}
  \right)U
  \\
  =&
  0\in H_1(C;\rho)
\end{split}
\end{equation*}
since $\tilde{x}=\tilde{y}$, $X=Y$, and $XU=U$.
\par
On the other hand $\mu_{C}=ptpt^{-1}$ and $\lambda_{C}=t(ptpt^{-1})^{-b}$ in $\pi_1(\pattern)$ from \S~\ref{sec:pi1}.
Therefore we conclude that $i^{\pattern}_{\ast}$ sends $\tilde{\mu}_{C}\otimes U$ to
\begin{equation*}
\begin{split}
  &i^{\pattern}_{\ast}(\tilde{\mu}_{C}\otimes U)
  \\
  =&
  \left(\frac{\partial(ptpt^{-1})}{\partial\,p}\tilde{p}\right)\otimes U
  +
  \left(\frac{\partial(ptpt^{-1})}{\partial\,t}\tilde{t}\right)\otimes U
  \\
  =&
  \tilde{p}\otimes(I_3+TP)U
  +
  \tilde{t}\otimes(P-T^{-1}PTP)U
  \\
  =&
  \begin{pmatrix}\Theta_1&O_3\\O_3&\Theta_1\end{pmatrix}
  \begin{pmatrix}
    -2\omega_2^{-1}z^{-2}(z^2+1)((\omega_2-1)^2z^2-\omega_2) \\
    2\omega_2^{-1}(\omega_2-1)z^{-1}(z^2+1)((\omega_2-1)z^2+\omega_2) \\
    2\omega_2^{-3}(\omega_2-1)^2(z^2+1)(z^2-\omega_2) \\
    2(z^2-1) \\
    2\omega_2^{-1}(\omega_2-1)z(z^2-\omega_2) \\
    -2\omega_2^{-2}(\omega_2-1)^2(z^2+1)(z^2-\omega_2)
  \end{pmatrix}
  \\
  =&
  2(z^2+1)
  \begin{pmatrix}\Theta_1&O_3\\O_3&\Theta_1\end{pmatrix}
  \begin{pmatrix}
    -\omega_2^{-1}((\omega_2-1)^2z^2-\omega_2)z^{-2} \\
    \omega_2^{-2}(\omega_2-1)((\omega_2-1)z^2+\omega_2)z^{-1} \\
    \omega_2^{-3}(\omega_2-1)^2(z^2-\omega_2) \\
    1 \\
    \omega_2^{-1}(\omega_2-1)z \\
    -\omega_2^{-2}(\omega_2-1)^2(z^2-\omega_2)
  \end{pmatrix}
  \\
  &
  -2
  \begin{pmatrix}\Theta_1&O_3\\O_3&\Theta_1\end{pmatrix}
  \begin{pmatrix}
    0\\0\\0\\2\\(\omega_2-\omega_2^{-1})z\\0
  \end{pmatrix}
  \\
  =&
  -2\tilde{t}\otimes U
  \in
  H_1(\pattern;\rho),
\end{split}
\end{equation*}
and $\tilde{\lambda}_{C}\otimes U$ to
\begin{equation*}
\begin{split}
  &i^{\pattern}_{\ast}(\tilde{\lambda}_{C}\otimes U)
  \\
  =&
  \left(\frac{\partial(t(ptpt^{-1})^{-b})}{\partial\,p}\tilde{p}\right)\otimes U
  +
  \left(\frac{\partial(t(ptpt^{-1})^{-b})}{\partial\,t}\tilde{t}\right)\otimes U
  \\
  =&
  \tilde{p}\otimes
  \left(
    -
    \sum_{k=1}^{b}(P^{-1}T^{-1}P^{-1}T)^{k}T
    -
    \sum_{k=0}^{b-1}P^{-1}T(P^{-1}T^{-1}P^{-1}T)^{k}T
  \right)U
  \\
  &\quad
  +
  \tilde{t}\otimes
  \left(
    I_3
    +
    \sum_{k=0}^{b-1}(P^{-1}T^{-1}P^{-1}T)^{k}T
    -
    \sum_{k=0}^{b-1}T^{-1}P^{-1}T(P^{-1}T^{-1}P^{-1}T)^{k}T
  \right)U
  \\
  =&
  \begin{pmatrix}\Theta_1&O_3\\O_3&\Theta_1\end{pmatrix}
  \begin{pmatrix}
    2b\omega_2^{-1}z^{-2}(z^2+1)((\omega_2-1)^2z^2-\omega_2) \\
    -2b\omega_2^{-2}z^{-1}(z^2+1)(\omega_2-1)((\omega_2-1)z^2+\omega_2) \\
    -2b\omega_2^{-3}(z^2+1)(z^2-\omega_2)(\omega_2-1)^2 \\
    2-2b(z^2-1) \\
    \omega_2^{-1}z(\omega_2-1)(-2bz^2+2b\omega+\omega+1) \\
    2b\omega_2^{-2}(z^2+1)(z^2-\omega_2)(\omega_2-1)^2
  \end{pmatrix}
  \\
  =&
  -2b(z^2+1)
  \begin{pmatrix}\Theta_1&O_3\\O_3&\Theta_1\end{pmatrix}
  \begin{pmatrix}
    -\omega_2^{-1}((\omega_2-1)^2z^2-\omega_2)z^{-2} \\
    \omega_2^{-2}(\omega_2-1)((\omega_2-1)z^2+\omega_2)z^{-1} \\
    \omega_2^{-3}(\omega_2-1)^2(z^2-\omega_2) \\
    1 \\
    \omega_2^{-1}(\omega_2-1)z \\
    -\omega_2^{-2}(\omega_2-1)^2(z^2-\omega_2)
  \end{pmatrix}
  \\
  &+
  (2b+1)
  \begin{pmatrix}\Theta_1&O_3\\O_3&\Theta_1\end{pmatrix}
  \begin{pmatrix}
    0\\0\\0\\2\\(\omega_2-\omega_2^{-1})z\\0
  \end{pmatrix}
  \\
  =&
  (2b+1)\tilde{t}\otimes U
  \in H_1(\pattern;\rho).
\end{split}
\end{equation*}
\par
So $\varphi_1$ is presented by the matrix
\begin{equation*}
  \begin{pmatrix}
     1&0 \\
     0&0  \\
    -2&2b+1
  \end{pmatrix}
\end{equation*}
with respect to the bases mentioned above.
It is clear that $\varphi_1$ is injective.
\end{itemize}
Therefore we have $H_0(E;\rho)=\{0\}$, $H_2(E;\rho)=\C$, which is generated by $(j^{\pattern}_{\ast}\circ i^{\pattern}_{\ast})([\Sigma]\otimes V)$, and $H_1(E;\rho)=\C$, which is generated by $j^{\pattern}_{\ast}(\tilde{p}\otimes V)=\tilde{\mu}\otimes V$.
\par
Now the bases for $\mathcal{H}$ are given as follows:
\begin{align*}
  \mathbf{c}^{\mathcal{H}}_8
  &=
  \left\langle
    [S]\otimes U
  \right\rangle,
  \\
  \mathbf{c}^{\mathcal{H}}_7
  &=
  \left\langle
    i^{\pattern}_{\ast}([S]\otimes U),
    i^{\pattern}_{\ast}([\Sigma]\otimes V)
  \right\rangle,
  \\
  \mathbf{c}^{\mathcal{H}}_6
  &=
  \left\langle
    (j^{\pattern}_{\ast}\circ i^{\pattern}_{\ast})([\Sigma]\otimes V)
  \right\rangle,
  \\
  \mathbf{c}^{\mathcal{H}}_5
  &=
  \left\langle
    \tilde{\mu}_{C}\otimes U,
    \tilde{\lambda}_{C}\otimes U
  \right\rangle,
  \\
  \mathbf{c}^{\mathcal{H}}_4
  &=
  \left\langle
    \tilde{x}\otimes U,
    \tilde{p}\otimes V,
    \tilde{t}\otimes U
  \right\rangle,
  \\
  \mathbf{c}^{\mathcal{H}}_3
  &=
  \left\langle
    \tilde{\mu}\otimes V
  \right\rangle,
  \\
  \mathbf{c}^{\mathcal{H}}_2
  &=
  \left\langle
    \tilde{v}\otimes U
  \right\rangle,
  \\
  \mathbf{c}^{\mathcal{H}}_1
  &=
  \left\langle
    \tilde{v}\otimes U
  \right\rangle.
\end{align*}
\par
We choose $\mathbf{b}^{\mathcal{H}}_i$ as follows:
\begin{align*}
  \\
  \mathbf{b}^{\mathcal{H}}_8
  &=
  \mathbf{c}^{\mathcal{H}}_8,
  \\
  \mathbf{b}^{\mathcal{H}}_7
  &=
  \left\langle
    i^{\pattern}_{\ast}([\Sigma]\otimes V)
  \right\rangle,
  \\
  \mathbf{b}^{\mathcal{H}}_5
  &=
  \mathbf{c}^{\mathcal{H}}_5,
  \\
  \mathbf{b}^{\mathcal{H}}_4
  &=
  \left\langle
    \tilde{p}\otimes V
  \right\rangle,
  \\
  \mathbf{b}^{\mathcal{H}}_2
  &=
  \mathbf{c}^{\mathcal{H}}_2.
\end{align*}
Then the torsion of the Mayer-Vietoris sequence equals
\begin{equation*}
\begin{split}
  \Tor(\mathcal{H}_{\ast})
  &=
  \pm
  \frac{
  \left[
    \partial(\mathbf{b}^{\mathcal{H}}_8)
    \cup
    \mathbf{b}^{\mathcal{H}}_7
    \mid
    \mathbf{c}^{\mathcal{H}}_7
  \right]
  \left[
    \mathbf{b}^{\mathcal{H}}_5
    \mid
    \mathbf{c}^{\mathcal{H}}_5
  \right]
  \left[
    \partial(\mathbf{b}^{\mathcal{H}}_4)
    \mid
    \mathbf{c}^{\mathcal{H}}_3
  \right]
  \left[
    \partial(\mathbf{b}^{\mathcal{H}}_2)
    \mid
    \mathbf{c}^{\mathcal{H}}_1
  \right]
  }
  {
  \left[
    \mathbf{b}^{\mathcal{H}}_8
    \mid
    \mathbf{c}^{\mathcal{H}}_8
  \right]
  \left[
    \partial(\mathbf{b}^{\mathcal{H}}_7)
    \mid
    \mathbf{c}^{\mathcal{H}}_6
  \right]
  \left[
    \partial(\mathbf{b}^{\mathcal{H}}_5)
    \cup
    \mathbf{b}^{\mathcal{H}}_4
    \mid
    \mathbf{c}^{\mathcal{H}}_4
  \right]
  \left[
    \mathbf{b}^{\mathcal{H}}_2
    \mid
    \mathbf{c}^{\mathcal{H}}_2
  \right]
  }
  \\
  &=
  \pm
  \frac{1}
  {
  \det
  \begin{pmatrix}
     1&0   &0 \\
     0&0   &1 \\
    -2&2b+1&0
  \end{pmatrix}
  }
  \\
  &=
  \pm\frac{1}{2b+1}.
\end{split}
\end{equation*}
\par
So we finally have
\begin{equation*}
  \Tor(E;\rho)
  =
  \pm
  \frac{(2b+1)(\omega_2^{2a+1}+\omega_2^{-2a-1})^2}{2(\omega_2^2-\omega_2^{-2})^2}
\end{equation*}
from \eqref{eq:MV_torsion_AN}.

\subsection{$\Im\rho_{C}$ is non-Abelian and $\Im\rho_{\pattern}$ is Abelian}\label{subsec:NA}
Let $\rho$ be the representation defined in Subsection~\ref{subsec:rep_NA}.
Then we have the following theorem.
\begin{thm}\label{thm:NA}
The Reidemeister torsion $\Tor(E;\rho)$ of $E$ twisted by the adjoint action of the representation $\rho$ associated with the meridian is given by
\begin{equation*}
  \Tor(E;\rho)
  =
  \pm
  \frac{2a+1}{2}
  \left(
    \frac{z^{8a-2b+3}+z^{-8a+2b-3}}{\omega_1-\omega_1^{-1}}
  \right)^2.
\end{equation*}
\end{thm}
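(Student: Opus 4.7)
My plan is to mirror the proof of Theorem~\ref{thm:AN} step by step, with the roles of ``Abelian'' and ``non-Abelian'' on the two pieces $C$ and $\pattern$ exchanged. The Mayer--Vietoris formula \eqref{eq:MV_torsion2}, combined with $\Tor(S;\rho)=\pm1$ from \eqref{eq:torsion_S}, reduces the theorem to computing the three pieces $\Tor(C;\rho)$, $\Tor(\pattern;\rho)$, and $\Tor(\mathcal{H}_{\ast})$. A key preliminary observation is that, after conjugation by $\theta_2$ introduced in Subsection~\ref{subsec:rep_NA}, all of $\rho(p)$, $\rho(q)$, $\rho(t)$ become diagonal, so $\rho_{\pattern}$ really is an Abelian representation up to conjugation, and the image of $\pi_1(S)$ under $\Ad\rho$ fixes the vector $W:=\Ad{\theta_2^{-1}}(H)$ (written in the $\{E,H,F\}$ basis). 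This $W$ plays the role that $U$ played in Subsection~\ref{subsec:AN}.

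For $\Tor(C;\rho)$, I would apply Fox calculus directly to the presentation $\langle x,y\mid(xy)^ax=y(xy)^a\rangle$ of the torus knot $T(2,2a+1)$ with the irreducible representation defined in Subsection~\ref{subsec:rep_NA}. The relation $\omega_1^{2a+1}=-1$ collapses the telescoping sums in $\partial_2$, and the output is a closed form in $z$ and $\omega_1$. This is the torus-knot case originally treated by Dubois--Kashaev \cite{Dubois/Kashaev:MATHA2007}; the identification of the geometric bases (using $W$ together with the second $\Ad$-invariant direction $\tilde{U}$ appearing in the main theorem) must be carried out carefully so that they are compatible with the bases chosen on $S$ and $\pattern$.

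For $\Tor(\pattern;\rho)$, since the representation is Abelian after conjugation, I would follow the template of Proposition~\ref{prop:Alexander}: the adjoint action on $\sl(2;\C)$ decomposes into weight spaces, so the differentials $\partial_1,\partial_2$ become block-diagonal with respect to this decomposition, and the torsion reduces to the determinant of an Alexander-type matrix of $\langle p,t\mid ptpt=tptp\rangle$ (i.e., the $(2,4)$-torus-link Alexander matrix) evaluated at the two characters determined by the eigenvalues of $\rho(p)$ and $\rho(t)$. The exponent $8a-2b+4$ in $\rho(t)=-\rho(p)^{-8a+2b-4}$ from Subsection~\ref{subsec:rep_NA} is precisely what will produce the factor $z^{8a-2b+3}+z^{-8a+2b-3}$ in the final formula.

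For $\Tor(\mathcal{H}_{\ast})$, I would set up the Mayer--Vietoris sequence \eqref{eq:MV_exact} with geometric bases given by the $W$-twists (and $\tilde{U}$-twists) of fundamental classes of $S$ and $\Sigma$, of $\tilde{\mu}_C,\tilde{\lambda}_C$, and of $\tilde{v}$, exactly paralleling the $U,V$ bookkeeping at the end of Subsection~\ref{subsec:AN}. The main inputs are the Fox-derivative expansions of $\mu_C=x$, $\lambda_C=y(xy)^{2a}x^{-4a-1}$ on the $C$-side and of $\mu_C=ptpt^{-1}$, $\lambda_C=t(ptpt^{-1})^{-b}$ on the $\pattern$-side. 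The main obstacle I anticipate is this last step: tracking the images of $\tilde{\lambda}_C\otimes W$ and $\tilde{\mu}_C\otimes W$ under $i^C_{\ast}$ and $i^{\pattern}_{\ast}$ will require the full Fox-calculus expansion of $\lambda_C$ in $\pi_1(C)$ (which is non-trivial because $\rho_C$ is now non-Abelian), and then identifying the resulting class in $H_1(C;\rho)$ and $H_1(\pattern;\rho)$ in terms of the chosen bases to compute the Mayer--Vietoris matrices. Once this is done, the product $\Tor(C;\rho)\Tor(\pattern;\rho)/\Tor(\mathcal{H}_{\ast})$ is expected to telescope, using $8a-2b+3=-(2b+1-4(2a+1))$, into the stated formula.
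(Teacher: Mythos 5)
Your plan follows the paper's proof essentially verbatim: Mayer--Vietoris with $\Tor(S;\rho)=\pm1$, a direct Fox-calculus computation of $\Tor(C;\rho)$ for the non-Abelian torus-knot representation (the paper also cross-checks this against Dubois's formula via Porti's change-of-curve factor $\partial u/\partial v$), the diagonalization of $\rho_{\pattern}$ by $\theta_2$ reducing $\Tor(\pattern;\rho)$ to a weight-space/Alexander-matrix computation yielding $(z^{8a-2b+3}+z^{-8a+2b-3})^2$, and the $\varphi_1$-matrix bookkeeping giving $\Tor(\mathcal{H}_{\ast})=\pm1$. Aside from two harmless notational slips (the invariant vector is $\theta_2^{-1}H\theta_2$ in the paper's $\Ad$ convention, and $\tilde{U}$ plays no role in this case, only $W$ does), this is the same argument.
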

\begin{cor}
Let $\rho^{\rm{NA}}_{\xi,k}$ be the representation $\rho$ with $z=\exp(\xi/2)$ and $\omega_1=\exp\left(\frac{(2k+1)\pi\sqrt{-1}}{2a+1}\right)$.
Then we have
\begin{equation*}
  \Tor(E;\rho^{\rm{NA}}_{\xi,k})
  =
  \pm
  \frac{2a+1}{2}
  \frac{\cosh^{2}\left(\frac{(8a-2b+3)\xi}{2}\right)}{\sin^2\left(\frac{(2k+1)\pi}{2a+1}\right)}
  =
  \pm\tau_2(\xi;k)^{-2}.
\end{equation*}
\end{cor}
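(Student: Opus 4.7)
The proof proceeds in close parallel with that of Theorem~\ref{thm:AN}, via the Mayer--Vietoris formula \eqref{eq:MV_torsion2}
\begin{equation*}
  \Tor(E;\rho)
  =
  \frac{\Tor(C;\rho)\Tor(\pattern;\rho)}{\Tor(S;\rho)\Tor(\mathcal{H}_{\ast})},
\end{equation*}
but with the roles of $C$ and $\pattern$ interchanged, since now $\Im\rho_C$ is non-Abelian while $\Im\rho_\pattern$ is Abelian. The plan is to compute each of the four torsions explicitly, identify compatible bases of the twisted homologies of $S$, $C$, and $\pattern$, and then combine them.

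For $S$, I would apply Subsection~\ref{subsec:torus} with $\zeta := z^2$, since $\rho(\mu_C) = \rho(x)$ is conjugate via $\theta_2$ to $\diag(z^2, z^{-2})$; this yields $\Tor(S;\rho) = \pm 1$, with bases of $H_*(S;\rho)$ obtained from those of Subsection~\ref{subsec:torus} upon replacing $H$ by the $\sl(2;\C)$-element $W := \theta_2^{-1} H \theta_2$, which is the invariant vector under the adjoint action of $\rho$ on $\pi_1(S)$. For $\pattern$, which now carries the Abelian representation, $\rho(p)$ and $\rho(t)$ are simultaneously diagonalizable via $\theta_2$ with eigenvalues $z^{\pm 1}$ and $-z^{\mp(8a-2b+4)}$ respectively, so I would adapt the proof of Proposition~\ref{prop:Alexander} to the two-generator presentation \eqref{eq:pi1_pattern} and compute $\Tor(\pattern;\rho)$ directly from the resulting chain complex. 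The bases of $H_*(\pattern;\rho)$ will again be expressed in terms of $W$ together with a second invariant vector corresponding to the eigenspace of $\rho(\mu) = \rho(p)$.

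For $C$, the torus knot complement of $T(2,2a+1)$ with the irreducible representation $\rho_C$, I would perform an explicit Fox-calculus computation on the presentation $\langle x, y \mid (xy)^a x = y(xy)^a\rangle$, mirroring Subsubsection~\ref{subsubsec:AN_D} but now applied to $C$. The outcome should be $H_0(C;\rho) = 0$ and $H_1(C;\rho)\cong H_2(C;\rho)\cong \C$, with bases involving $W$ and $i^C_*([S]\otimes W)$, and with $\Tor(C;\rho)$ carrying a factor of the form $(2a+1)/(\omega_1-\omega_1^{-1})^2$ up to units in $z$, consistent with the torus-knot calculations underlying Theorem~\ref{thm:Hikami/Murakami_intro}. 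I would then assemble the Mayer--Vietoris sequence analogous to \eqref{eq:MV_exact}, compute the inclusion-induced maps using $\mu_C = x$ and $\lambda_C = y(xy)^{2a}x^{-4a-1}$ inside $\pi_1(C)$ and $\mu_C = ptpt^{-1}$, $\lambda_C = t(ptpt^{-1})^{-b}$ inside $\pi_1(\pattern)$, and deduce $\Tor(\mathcal{H}_*)$, which should contribute a factor $1/(2a+1)$, mirroring the role played by $2b+1$ in Theorem~\ref{thm:AN}.

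The main obstacle is the evaluation of $i^C_*\colon H_1(S;\rho) \to H_1(C;\rho)$ on the longitude class $\tilde\lambda_C \otimes W$: the Fox derivatives of the long word $y(xy)^{2a}x^{-4a-1}$ do not collapse to a geometric series as in the Abelian situation, and one must exploit the torus-knot relation $(xy)^a x = y(xy)^a$ together with the $\Ad{\rho(x)}$-invariance of $W$ to reduce the result to a scalar multiple of $\tilde x \otimes W$. The emergence of the scalar $2a+1$ from this reduction is the combinatorial heart of the calculation and is precisely what produces the factor $2a+1$ in the final torsion. Once this bookkeeping is completed, combining the pieces via \eqref{eq:MV_torsion2} and specializing to $z = \exp(\xi/2)$, $\omega_1 = \exp\bigl((2k+1)\pi\sqrt{-1}/(2a+1)\bigr)$ yields both Theorem~\ref{thm:NA} and its corollary.
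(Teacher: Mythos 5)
Your overall strategy is the paper's: decompose $E=C\cup_S\pattern$, compute the four torsions entering \eqref{eq:MV_torsion2}, and assemble. The pieces you describe for $S$ (via Subsection~\ref{subsec:torus} with $\zeta=z^2$, giving $\pm1$), for $\pattern$ (direct Fox calculus on $\langle p,t\mid ptpt=tptp\rangle$ with the Abelian representation, giving $\pm(z^{8a-2b+3}+z^{-8a+2b-3})^2$), and for $C$ (direct Fox calculus on $\langle x,y\mid(xy)^ax=y(xy)^a\rangle$, or equivalently Dubois' computation converted to the homological/meridian normalization, giving $\pm\frac{2a+1}{2(\omega_1-\omega_1^{-1})^2}$) all match what is actually done.

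The gap is in your accounting of the factor $2a+1$, and the ``mirroring'' of Theorem~\ref{thm:AN} is exactly where it goes wrong. With all homology generators tensored with the single invariant vector $W$ (no second invariant vector is needed on $\pattern$, since $\rho_{\pattern}$ is Abelian), one finds $i^C_*(\tilde\lambda_C\otimes W)=-2(2a+1)\,\tilde x\otimes W$, $i^{\pattern}_*(\tilde\mu_C\otimes W)=2\,\tilde p\otimes W$, and $i^{\pattern}_*(\tilde\lambda_C\otimes W)=-2b\,\tilde p\otimes W+\tilde t\otimes W$, so that the determinant computing the torsion of the Mayer--Vietoris sequence is $\det\begin{pmatrix}1&-2(2a+1)&0\\2&-2b&1\\0&1&0\end{pmatrix}=\pm1$; hence $\Tor(\mathcal{H}_*)=\pm1$, not $\pm1/(2a+1)$. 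The coefficient $-2(2a+1)$ that you correctly anticipate in $i^C_*(\tilde\lambda_C\otimes W)$ sits in a column that is eliminated by the unit entries and does not survive into the torsion. The asymmetry with the AN case is structural: there $i^{\pattern}_*(\tilde\lambda_C\otimes U)=(2b+1)\,\tilde t\otimes U$ has no unit companion in $H_1(\pattern;\rho)$, which is why $2b+1$ survives from the gluing, whereas here the $2a+1$ of the corollary is already entirely contained in $\Tor(C;\rho)=\pm\frac{2a+1}{2(\omega_1-\omega_1^{-1})^2}$, i.e.\ in the torsion of the torus-knot exterior with its irreducible representation. If you keep that value of $\Tor(C;\rho)$ and also divide by $\Tor(\mathcal{H}_*)=\pm1/(2a+1)$ as you propose, the end result is $(2a+1)^2$ rather than $2a+1$, and the corollary fails by a factor of $2a+1$.
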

The rest of this subsection will be devoted to the proof of Theorem~\ref{thm:NA}
\subsubsection{Reidemeister torsion of $S$}
Since we calculate
\begin{align*}
  \rho(x)^{-1}E\rho(x)
  &=
  \begin{pmatrix}
    0&z^{-4}\\
    0&0
  \end{pmatrix},
  \\
  \rho(x)^{-1}H\rho(x)
  &=
  \begin{pmatrix}
    1&2z^{-2}\\
    0&-1
  \end{pmatrix},
  \\
  \rho(x)^{-1}F\rho(x)
  &=
  \begin{pmatrix}
    -z^{2}&-1\\
    z^{4}  &z^{2}
  \end{pmatrix},
  \\
\end{align*}
the adjoint actions of $\rho(\mu_{C})$ is given by the multiplications from the left of the following three by three matrix:
\begin{equation*}
  X
  :=
  \begin{pmatrix}
    z^{-4}&2z^{-2}&-1 \\
    0     &1      &-z^{2}\\
    0     &0      &z^{4}
  \end{pmatrix}.
\end{equation*}
If we put
\begin{equation*}
  W
  :=
  \begin{pmatrix}2\\z^2-z^{-2}\\0\end{pmatrix},
\end{equation*}
then $W$ is invariant under the adjoint action of any element of $\pi_1(S)$.
\par
Therefore we have
\begin{alignat*}{2}
  H_2(S;\rho)
  &=
  \C
  &\quad&\text{(generated by $[S]\otimes W$)},
  \\
  H_1(S;\rho)
  &=
  \C^2
  &\quad&\text{(generated by $\{\tilde{\mu}_{C}\otimes W,\tilde{\lambda}_{C}\otimes W\}$)},
  \\
  H_0(S;\rho)
  &=
  \C
  &\quad&\text{(generated by $\tilde{v}\otimes W$)}
\end{alignat*}
from Subsection~\ref{subsec:torus}.
\subsubsection{Reidemeister torsion of $C$}
In \cite[6.2]{Dubois:CANMB2006}, J.~Dubois proved that the \emph{cohomological} Reidemeister torsion of $C$ twisted by $\rho$ associated with the \emph{longitude} is $\pm\left(\dfrac{\omega_1-\omega_1^{-1}}{2a+1}\right)^2$.
Since we are using \emph{homological} Reidemeister torsion associated with the \emph{meridian}, we need to take its inverse and multiply by $\dfrac{\partial{u}}{\partial{v}}$ from \cite[Th{\'e}or{\`e}me~4.1]{Porti:MAMCAU1997}, where if we put $z^2:=e^{u/2}$ in $\rho(\mu_{C})=\rho(x)$ then $\rho(\lambda_{C})$ is given as $\begin{pmatrix}e^{v/2}&\ast\\0&e^{-v/2}\end{pmatrix}$.
Since $e^{v/2}=-z^{-8a-4}$ from Subsection~\ref{subsec:rep_NA}, we have
\begin{equation}\label{eq:du/dv}
  \frac{\partial{u}}{\partial{v}}
  =
  \frac{1}{-4a-2}.
\end{equation}
Therefore we have
\begin{equation*}
  \Tor(C;\rho)
  =
  \pm
  \frac{(2a+1)^2}{(\omega_1-\omega_1^{-1})^2}\times\frac{1}{(-4a-2)}
  =
  \pm
  \frac{2a+1}{2(\omega_1-\omega_1^{-1})^2}.
\end{equation*}
However for later use we give a proof by direct calculation.
\par
We regard $C$ as a cell complex with one $0$-cell $v$, two $1$-cells $x$ and $y$, and one $2$-cell $f$, where the boundary of $f$ is attached to the bouquet $v\cup x\cup y$ along the word $(xy)^ax(xy)^{-a}y^{-1}$.
Since we have
\begin{align*}
  \rho(y)^{-1}E\rho(y)
  &=
  \begin{pmatrix}
    z^{-2}(\omega_1+\omega_1^{-1}-z^4-z^{-4})&z^{-4}\\
    -(\omega_1+\omega_1^{-1}-z^4-z^{-4})^2   &-z^{-2}(\omega_1+\omega_1^{-1}-z^4-z^{-4})
  \end{pmatrix},
  \\
  \rho(y)^{-1}H\rho(y)
  &=
  \begin{pmatrix}
    1                                           &0\\
    -2z^{2}(\omega_1+\omega_1^{-1}-z^{4}-z^{-4})&-1
  \end{pmatrix},
  \\
  \rho(y)^{-1}F\rho(y)
  &=
  \begin{pmatrix}
    0    &0\\
    z^{4}&0
  \end{pmatrix},
\end{align*}
the adjoint action of $\rho(y)$ is given by
\begin{equation*}
  Y
  :=
  \begin{pmatrix}
    z^{-4}                                   &0                                           &0\\
    z^{-2}(\omega_1+\omega_1^{-1}-z^4-z^{-4})&1                                           &0\\
    -(\omega_1+\omega_1^{-1}-z^4-z^{-4})^2   &-2z^{2}(\omega_1+\omega_1^{-1}-z^{4}-z^{-4})&z^{4}
  \end{pmatrix}.
\end{equation*}
\par
The differential $\partial_2^{C}\colon C_2(C;\rho)\to C_1(C;\rho)$ is given by
\begin{equation*}
\begin{split}
  \partial_2^{C}
  &=
  \Phi
  \begin{pmatrix}
    \Ad{\rho}
    \left(
      \frac{\partial\left((xy)^ax(xy)^{-a}y^{-1}\right)}{\partial x}
    \right)
    \\
    \Ad{\rho}
    \left(
      \frac{\partial\left((xy)^ax(xy)^{-a}y^{-1}\right)}{\partial y}
    \right)
  \end{pmatrix}
  \\
  &=
  \begin{pmatrix}
    \sum_{k=0}^{a}(YX)^{k}-\sum_{k=1}^{a}(YX)^{-k}X(YX)^{a}
    \\
    \sum_{k=0}^{a-1}X(YX)^{k}-\sum_{k=0}^{a}Y^{-1}(YX)^{-k}X(YX)^{a}
  \end{pmatrix}
  \\
  &=
  \frac{1}{(\omega_1^2-1)^2}
  \begin{pmatrix}
    D_1&D_2&D_3
  \end{pmatrix}
\end{split}
\end{equation*}
with
\begin{align}
  D_1&:=-z^{-4}(z^4-\omega_1)(z^4-\omega_1^{-1})D_3,
  \notag
  \\
  D_2
  &:=
  \begin{pmatrix}
    2\omega_1\left(\left(\omega_1^2+\omega_1+1\right)z^8-2\omega_1z^4-\omega_1-2a\left(z^4-\omega_1\right)
          \left(z^4 \omega_1-1\right)\right)z^{-6} \\[1mm]\hdashline
    (2a+1)\omega_1^4-2\left(z^4+a\left(z^8+3\right)+1\right)\omega_1^3z^{-4}\hfill \\
    +2\left(z^{12}+z^4+a\left(4 z^8+2\right)+1\right)\omega_1^2z^{-8} \\
    \hfill-2\left(z^4+a \left(z^8+3\right)+1\right)\omega_1z^{-4}+2a+1 \\[1mm]\hdashline
    2(2a+1)(\omega_1z^4-1)^2(z^4-\omega_1)^2z^{-10} \\[1mm]\hdashline
    -2(2a+1)\omega_1\left(z^4-\omega_1\right)\left(z^4 \omega_1-1\right)z^{-2} \\[1mm]\hdashline
    -2(2 a+1)\omega_1^2z^{8}
    +2\omega_1\left((\omega_1-1)\omega_1+3a\left(\omega_1^2+1\right)+1\right)z^4\hfill\\
    -\left((\omega_1-2) \omega_1^3-2\omega_1+2a\left(\omega_1^4+4 \omega_1^2+1\right)+1\right) \\
    \hfill+2\omega_1\left(a\omega_1^2-\omega_1+a\right)z^{-4} \\[1mm]\hdashline
    2\left(z^4-\omega_1\right)\left(z^4 \omega_1-1\right)\hfill \\
    \hfill
    \left(\left(z^8+2z^4-\omega_1-1\right)\omega_1+2a\left(z^4-\omega_1\right)\left(z^4\omega_1-1\right)-1\right)z^{-6}
  \end{pmatrix},
  \label{eq:D2}
  \\
  D_3
  &:=
  \begin{pmatrix}
    \omega_1\left(\left(\omega_1^2-2a\omega_1+\omega_1+1\right)z^4+2a\omega_1+\omega_1\right)z^{-4} \\
    -\left(z^4-1\right)
    \omega_1\left(a\left(z^4\left(\omega_1^2+1\right)-2\omega_1\right)-\left(z^4+1\right)\omega_1\right)z^{-6} \\
    (2a+1)\left(z^4-1\right)\left(z^4-\omega_1\right)\omega_1\left(z^4\omega_1-1\right)z^{-8} \\
    -(2a+1)\left(z^4-1\right)\omega_1^2 \\
    -\left(z^4-1\right)\omega_1\left(\left(z^4+1\right)\omega_1+a\left(2\omega_1z^4-\omega_1^2-1\right)\right)z^{-2} \\
    \left(z^4-\omega_1\right)\left(z^4 \omega_1-1\right)
    \left(\omega_1\left(z^4+2a\left(z^4-1\right)+\omega_1+1\right)+1\right)z^{-4}
  \end{pmatrix}
  \label{eq:D3}
\end{align}
and
\begin{equation*}
\begin{split}
  &\partial_1^{C}
  \\
  =&
  \Phi
  \begin{pmatrix}
    \Ad{\rho}(x-1)&\Ad{\rho}(y-1)
  \end{pmatrix}
  \\
  =&
  \begin{pmatrix}X-I_3&Y-I_3\end{pmatrix}
  \\
  =&
  \left(
  \begin{array}{c:c:c:}
    z^{-4}-1&2z^{-2}&-1     \\
    0       &0      &-z^{2} \\
    0       &0      &z^{4}-1
  \end{array}
  \right.
  \\
  &\hspace{18mm}
  \left.
  \begin{array}{:c:c:c}
    z^{-4}-1&0&0\\
     z^{-2}(\omega_1+\omega_1^{-1}-z^4-z^{-4})&0                                           &0\\
    -(\omega_1+\omega_1^{-1}-z^4-z^{-4})^2   &-2z^{2}(\omega_1+\omega_1^{-1}-z^{4}-z^{-4})&z^{4}-1
  \end{array}
  \right)
\end{split}
\end{equation*}
with respect to the bases $\{\tilde{v}\otimes E,\tilde{v}\otimes H,\tilde{v}\otimes F\}$ for $C_0(C;\rho)$, $\{\tilde{x}\otimes E,\tilde{x}\otimes H,\tilde{x}\otimes F,\tilde{y}\otimes E,\tilde{y}\otimes H,\tilde{y}\otimes F\}$ for $C_1(C;\rho)$, and $\{\tilde{f}\otimes E,\tilde{f}\otimes H,\tilde{f}\otimes F\}$ for $C_0(C;\rho)$.
\par
So we have
\begin{align*}
  \Ker\partial_2^{C}
  &=
  \left\langle
    \begin{pmatrix}
      z^4\\0\\(z^4-\omega_1)(z^4-\omega_1^{-1})
    \end{pmatrix}
  \right\rangle,
  \\
  \Im\partial_2^{C}
  &=
  \left\langle
    \partial_2^{C}
    \begin{pmatrix}0\\1\\0\end{pmatrix},
    \partial_2^{C}
    \begin{pmatrix}0\\0\\1\end{pmatrix}
  \right\rangle,
  \\
  \Ker\partial_1^{C}
  &=
  \Im\partial_2^{C}
  \oplus
  \left\langle
    \begin{pmatrix}
      2\\z^2-z^{-2}\\0\\0\\0\\0
    \end{pmatrix}
  \right\rangle
  \\
  &=
  \Im\partial_2^{C}
  \oplus
  \left\langle
    \tilde{x}\otimes W
  \right\rangle,
  \\
  \Im\partial_1^{C}
  &=
  \left\langle
    \partial_1^{C}
    \begin{pmatrix}
      0\\1\\0\\0\\0\\0
    \end{pmatrix},
    \partial_1^{C}
    \begin{pmatrix}
      0\\0\\1\\0\\0\\0
    \end{pmatrix},
    \partial_1^{C}
    \begin{pmatrix}
      0\\0\\0\\0\\0\\1
    \end{pmatrix}
  \right\rangle,
\end{align*}
where we put $W:=\begin{pmatrix}2\\z^2-z^{-2}\\0\end{pmatrix}$ as before.
\par
Now we study topological interpretation of the generator of $\Ker\partial_2^{C}$.
Let $[S]\in H_2(S;\Z)$ be the fundamental class.
We will calculate $i^{C}_{\ast}([S]\otimes W)$.
Since $[S]$ is presented by a $2$-cell whose boundary is attached to
\begin{equation*}
\begin{split}
  \mu_{C}\lambda_{C}\mu_{C}^{-1}\lambda_{C}^{-1}
  &=
  x\cdot y(xy)^{2a}x^{-4a-1}\cdot x^{-1}\cdot x^{4a+1}(xy)^{-2a}y^{-1}\cdot
  \\
  &=
  \left((xy)^{a}x(xy)^{-a}y^{-1}\right)\times\ad{y(xy)^{a}}\left(\left((xy)^{a}x(xy)^{-a}y^{-1}\right)^{-1}\right)
\end{split}
\end{equation*}
with $\ad{z}(w):=zwz^{-1}$, we have
\begin{equation*}
\begin{split}
  i^{C}_{\ast}([S]\otimes W)
  &=
  \tilde{f}\otimes W+
  \Ad{\rho(y(xy)^{a})}(-\tilde{f}\otimes W)
  \\
  &=
  (I_3-(YX)^{a}Y)\begin{pmatrix}2\\z^2-z^{-2}\\0\end{pmatrix}
  \\
  &=
  \begin{pmatrix}2\\0\\2(z^4+z^{-4}-\omega_1-\omega_1^{-1})\end{pmatrix}
  \\
  &=
  2z^{-4}\begin{pmatrix}z^4\\0\\(z^4-\omega_1)(z^4-\omega_1^{-1})\end{pmatrix}.
\end{split}
\end{equation*}
This is because if $w$ is the word presenting the boundary of $f$, then the element $\ad{z}(w)$ is lifted to $\tilde{z}\tilde{w}\tilde{z}^{-1}$ in the universal cover of $C$.
Since $\tilde{w}$ bounds a lift of $f$, $\tilde{z}\tilde{w}\tilde{z}^{-1}$ coincides with $z\cdot\tilde{w}$, where $\cdot$ means the action of $\pi_1(C)$ on the universal cover of $C$ by the covering transformation.
\par
Therefore we have
\begin{alignat*}{2}
  H_2(C;\rho)
  &=
  \C,
  &\quad&\text{(generated by $i^{C}_{\ast}([S]\otimes W$)},
  \\
  H_1(C;\rho)
  &=
  \C
  &\quad&\text{(generated by $\tilde{x}\otimes W$)},
  \\
  H_0(C;\rho)
  &=
  \{0\}.
  &
\end{alignat*}
\par
From the calculation above if we put
\begin{align*}
  \tilde{\mathbf{h}}^{C}_2
  &:=
  \left\langle
    \begin{pmatrix}
      2\\0\\2(z^4+z^{-4}-\omega_1-\omega_1^{-1})
    \end{pmatrix}
  \right\rangle,
  \\
  \mathbf{b}^{C}_2
  &:=
  \left\langle
    \begin{pmatrix}0\\1\\0\end{pmatrix},
    \begin{pmatrix}0\\0\\1\end{pmatrix}
  \right\rangle,
  \\
  \tilde{\mathbf{h}}^{C}_1
  &:=
  \left\langle
    \begin{pmatrix}
      2\\z^2-z^{-2}\\0\\0\\0\\0
    \end{pmatrix}
  \right\rangle,
  \\
  \mathbf{b}^{C}_1
  &:=
  \left\langle
    \begin{pmatrix}0\\1\\0\\0\\0\\0\end{pmatrix},
    \begin{pmatrix}0\\0\\1\\0\\0\\0\end{pmatrix},
    \begin{pmatrix}0\\0\\0\\0\\0\\1\end{pmatrix}
  \right\rangle,
\end{align*}
we see that $\mathbf{b}^{C}_2\cup\tilde{\mathbf{h}}^{C}_2$ forms a basis of $C_2(C;\rho)$, that $\mathbf{b}^{C}_1\cup\tilde{\mathbf{h}}^{C}_1\cup\partial^{C}_2(\mathbf{b}^{C}_2)$ forms a basis of $C_1(C;\rho)$, and that $\partial^{C}_1(\mathbf{b}^{C}_1)$ forms a basis of $C_0(C;\rho)$.
Therefore the Reidemeister torsion of $C$ associated with these bases is
\begin{equation*}
\begin{split}
  &\Tor(C;\rho)
  \\
  =&\pm
  \frac{
  \left|
  \begin{array}{c:c:c}
    \begin{array}{cccc}
      0&0&0&2         \\
      1&0&0&z^2-z^{-2}\\
      0&1&0&0         \\
      0&0&0&0         \\
      0&0&0&0         \\
      0&0&1&0
    \end{array}
    &\frac{1}{(\omega_1^2-1)^2}D_2&\frac{1}{(\omega_1^2-1)^2}D_3
  \end{array}
  \right|}
  {
  \begin{vmatrix}
    0&0&2                                    \\
    1&0&0                                    \\
    0&1&2(z^4+z^{-4}-\omega_1-\omega_1^{-4})
  \end{vmatrix}
  \begin{vmatrix}
    2z^{-2}&-1     &0      \\
    0      &-z^{2} &0      \\
    0      &z^{4}-1&z^{4}-1
  \end{vmatrix}}
  \\
  =&
  \pm
  \frac{2a+1}{2(\omega_1-\omega_1^{-1})^2}.
\end{split}
\end{equation*}
\subsubsection{Reidemeister torsion of $\pattern$}
As before, we regard $\pattern$ as a cell complex with one $0$-cell $v$, two $1$-cells $p$ and $t$, and one $2$-cell $f$.
The adjoint action of $\rho(p)$ is given by
\begin{equation*}
\begin{split}
  P
  :=
  \Theta_2
  \begin{pmatrix}
    z^{-2}&0&0 \\
    0     &1&0 \\
    0     &0&z^2
  \end{pmatrix}
  \Theta_2^{-1}
\end{split}
\end{equation*}
with
\begin{equation*}
  \Theta_2
  :=
  \begin{pmatrix}
    1&\frac{2z^2}{z^4-1}&-\frac{z^4}{(z^4-1)^2}\\
    0&1                 &\frac{z^2}{1-z^{4}} \\
    0&0                 &1
  \end{pmatrix}.
\end{equation*}
Since $\rho(t)=-\rho(p)^{-8a+2b-4}$, the adjoint action of $\rho(t)$ is given by $T:=P^{-8a+2b-4}$.
\par
The differential $\partial_2$ is given by
\begin{align*}
  &
  \partial_2^{\pattern}
  \\
  =&
  \Phi
  \begin{pmatrix}
    \Ad{\rho}\left(\frac{\partial(ptptp^{-1}t^{-1}p^{-1}t^{-1})}{\partial p}\right)
    \\
    \Ad{\rho}\left(\frac{\partial(ptptp^{-1}t^{-1}p^{-1}t^{-1})}{\partial t}\right)
  \end{pmatrix}
  \\
  =&
  \begin{pmatrix}
    I_3+TP-P^{-1}TPTP-P^{-1}T^{-1}P^{-1}TPTP \\
    P+PTP-T^{-1}P^{-1}TPTP-T^{-1}P^{-1}T^{-1}P^{-1}TPTP
  \end{pmatrix}
  \\
  =&
  \begin{pmatrix}
    (I_3-P^{-8a+2b-4})(I_3+P^{-8a+2b-3}) \\
    (P-I_3)(I_3+P^{-8a+2b-3})
  \end{pmatrix}
  \\
  =&
  \begin{pmatrix}\Theta_2&O_3\\O_3&\Theta_2\end{pmatrix}
  \\
  &\times
  \begin{pmatrix}
    (1-z^{16a-4b+8})(1+z^{16a-4b+6})&0&0 \\
    0&0&0 \\
    0&0&(1-z^{-16a+4b-8})(1+z^{-16a+4b-6}) \\
    (z^{-2}-1)(1+z^{16a-4b+6})&0&0 \\
    0&0&0 \\
    0&0&(z^{2}-1)(1+z^{-16a+4b-6})
  \end{pmatrix}
  \\
  &\times
  \Theta_2^{-1}.
\end{align*}
The differential $\partial_1^{\pattern}$ is given by
\begin{equation*}
\begin{split}
  &\partial_1^{\pattern}
  \\
  =&
  \Phi
  \begin{pmatrix}
    \Ad{\rho}(p-1)&\Ad{\rho}(t-1)
  \end{pmatrix}
  \\
  =&
  \Theta_2
  \begin{pmatrix}
    z^{-2}-1&0&0    &z^{16a-4b+8}-1&0&0 \\
    0       &0&0    &0             &0&0 \\
    0       &0&z^2-1&0             &0&z^{-16a+4b-8}-1
  \end{pmatrix}
  \begin{pmatrix}\Theta_2^{-1}&O_3\\O_3&\Theta_2^{-1}\end{pmatrix}.
\end{split}
\end{equation*}
Therefore we have
\begin{align*}
  \Ker\partial_2^{\pattern}
  &=
  \left\langle
    \Theta_2
    \begin{pmatrix}
      0\\1\\0
    \end{pmatrix}
  \right\rangle
  =
  \left\langle
    \frac{1}{z^2-z^{-2}}
    \begin{pmatrix}
      2\\z^2-z^{-2}\\0
    \end{pmatrix}
  \right\rangle,
  \\
  \Im\partial_2^{\pattern}
  &=
  \left\langle
    \begin{pmatrix}\Theta_2&O_3\\O_3&\Theta_2\end{pmatrix}
    \begin{pmatrix}
      1-z^{16a-4b+8} \\
      0\\
      0\\
      z^{-2}-1 \\
      0\\
      0
    \end{pmatrix},
    \begin{pmatrix}\Theta_2&O_3\\O_3&\Theta_2\end{pmatrix}
    \begin{pmatrix}
      0\\
      0\\
      1-z^{-16a+4b-8} \\
      0\\
      0\\
      z^{2}-1 \\
    \end{pmatrix}
  \right\rangle,
  \\
  \Ker\partial_1^{\pattern}
  &=
  \Im\partial_2^{\pattern}
  \oplus
  \left\langle
    \begin{pmatrix}\Theta_2&O_3\\O_3&\Theta_2\end{pmatrix}
    \begin{pmatrix}
      0\\1\\0\\0\\0\\0
    \end{pmatrix},
    \begin{pmatrix}\Theta_2&O_3\\O_3&\Theta_2\end{pmatrix}
    \begin{pmatrix}
      0\\0\\0\\0\\1\\0
    \end{pmatrix}
  \right\rangle
  \\
  &=
  \Im\partial_2^{\pattern}
  \oplus
  \left\langle
    \begin{pmatrix}2\\z^2-z^{-2}\\0\\0\\0\\0\end{pmatrix},
    \begin{pmatrix}0\\0\\0\\2\\z^2-z^{-2}\\0\end{pmatrix}
  \right\rangle,
  \\
  \Im\partial_1^{\pattern}
  &=
  \left\langle
    \begin{pmatrix}\Theta_2&O_3\\O_3&\Theta_2\end{pmatrix}
    \begin{pmatrix}z^{-2}-1\\0\\0\end{pmatrix},
    \begin{pmatrix}\Theta_2&O_3\\O_3&\Theta_2\end{pmatrix}
    \begin{pmatrix}0\\0\\z^2-1\end{pmatrix}.
  \right\rangle
\end{align*}
\par
So we have
\begin{alignat*}{2}
  H_2(\pattern;\rho)
  &=
  \C
  &\quad&\text{(generated by $\tilde{f}\otimes W$)},
  \\
  H_1(\pattern;\rho)
  &=
  \C^2
  &\quad&\text{(generated by $\tilde{p}\otimes W$ and $\tilde{t}\otimes W$)},
  \\
  H_0(\pattern;\rho)
  &=
  \C
  &\quad&\text{(generated by $\tilde{v}\otimes W$)}.
\end{alignat*}
\par
As in the previous case we have $i^{\pattern}_{\ast}([S]\otimes W)=\tilde{f}\otimes W$ and so $H_2(\pattern;\rho)$ is generated by $i^{\pattern}_{\ast}([S]\otimes W)$.
\begin{rem}\label{rem:NA_Sigma}
We calculate $i_{\ast}([\Sigma]\otimes W)$, where $i\colon\Sigma\to\pattern$ is the inclusion map.
As in Subsubsection~\ref{subsubsec:AN_D}, the fundamental group $[\Sigma]$ coincides with $f$.
Moreover it can be seen that the adjoint action of $\pi_1(\Sigma)$ leaves $W$ invariant.
So we see that $i_{\ast}([\Sigma]\otimes W)$ also coincides with $\tilde{f}\otimes W$.
\end{rem}
\par
Now we calculate the Reidemeister torsion of $\pattern$.
\par
From the calculation above if we put
\begin{align*}
  \tilde{\mathbf{h}}^{\pattern}_2
  &:=
  \left\langle
    \Theta_2
    \begin{pmatrix}
      0\\1\\0
    \end{pmatrix}
  \right\rangle,
  \\
  \mathbf{b}^{\pattern}_2
  &:=
  \left\langle
    \Theta_2
    \begin{pmatrix}
      1\\0\\0
    \end{pmatrix},
    \Theta_2
    \begin{pmatrix}
      0\\0\\1
    \end{pmatrix}
  \right\rangle,
  \\
  \tilde{\mathbf{h}}^{\pattern}_1
  &:=
  \left\langle
    \begin{pmatrix}\Theta_2&O_3\\O_3&\Theta_2\end{pmatrix}
    \begin{pmatrix}
      0\\1\\0\\0\\0\\0
    \end{pmatrix},
    \begin{pmatrix}\Theta_2&O_3\\O_3&\Theta_2\end{pmatrix}
    \begin{pmatrix}
      0\\0\\0\\0\\1\\0
    \end{pmatrix}
  \right\rangle,
  \\
  \mathbf{b}^{\pattern}_1
  &:=
  \left\langle
    \begin{pmatrix}
    \Theta_2&O_3\\
    O_3     &\Theta_2
    \end{pmatrix}
    \begin{pmatrix}1\\0\\0\\0\\0\\0\end{pmatrix},
    \begin{pmatrix}
    \Theta_2&O_3\\
    O_3     &\Theta_2
    \end{pmatrix}
    \begin{pmatrix}0\\0\\1\\0\\0\\0\end{pmatrix}
  \right\rangle,
  \\
  \tilde{\mathbf{h}}^{\pattern}_0
  &:=
  \left\langle
    \Theta_2
    \begin{pmatrix}
      0\\1\\0
    \end{pmatrix}
  \right\rangle,
\end{align*}
we see that $\tilde{\mathbf{h}}^{\pattern}_2\cup\mathbf{b}^{\pattern}_2$ forms a basis of $C^{\pattern}_2$, that $\tilde{\mathbf{h}}^{\pattern}_1\cup\mathbf{b}^{\pattern}_1\cup\partial^{\pattern}_2(\mathbf{b}^{\pattern}_2)$ forms a basis of $C^{\pattern}_1$, and that $\tilde{\mathbf{h}}^{\pattern}_0\cup\partial^{\pattern}_1(\mathbf{b}^{\pattern}_1)$ forms a basis of $C^{\pattern}_0$.
Therefore the Reidemeister torsion of $\pattern$ associated with these bases is
\begin{equation*}
\begin{split}
  &\Tor(\pattern;\rho)
  \\
  =&
  \pm
  \det\begin{pmatrix}\Theta_2&O_3\\O_3&\Theta_2\end{pmatrix}
  \\
  &\times
  \begin{vmatrix}
    0&0&1&0&\hspace{-1mm}(1-z^{16a-4b+8})(1+z^{16a-4b+6})\hspace{-2mm}&0\\
    1&0&0&0&0                               &0\\
    0&0&0&1&0                               &\hspace{-3mm}(1-z^{-16a+4b-8})(1+z^{-16a+4b-6})\\
    0&0&0&0&(z^{-2}-1)(1+z^{16a-4b+6})      &0\\
    0&1&0&0&0                               &0\\
    0&0&0&0&0                               &(z^{2}-1)(1+z^{-16a+4b-6})
  \end{vmatrix}
  \\
  &\times
  \left(
  \det\Theta_2
  \begin{vmatrix}
    0&1&0 \\
    1&0&0 \\
    0&0&1
  \end{vmatrix}
  \det\Theta_2
  \begin{vmatrix}
    0&z^{-2}-1&0 \\
    1&0       &0 \\
    0&0       &z^2-1
  \end{vmatrix}
  \right)^{-1}
  \\
  =&
  \pm
  (z^{8a-2b+3}+z^{-8a+2b-3})^2.
\end{split}
\end{equation*}
\subsubsection{Reidemeister torsion of the Mayer--Vietoris sequence}
From \eqref{eq:MV_torsion2} we have
\begin{equation}\label{eq:MV_torsion_NA}
  \Tor(E;\rho)
  =
  \frac{\Tor(C;\rho)\Tor(\pattern;\rho)}{\Tor(\mathcal{H}_{\ast})}
  =
  \pm
  \frac{(2a+1)(z^{8a-2b+3}+z^{-8a+2b-3})^2}{2(\omega_1-\omega_1^{-1})^2\Tor(\mathcal{H}_{\ast})},
\end{equation}
where $\mathcal{H}_{\ast}$ is the following Mayer--Vietoris exact sequence \eqref{eq:MV_exact}.
\par
We calculate the torsion of the Mayer--Vietoris exact sequence with respect to the bases that we have calculated:
\begin{align*}
  H_2(S;\rho)
  &=
  \left\langle
    [S]\otimes W
  \right\rangle,
  \\
  H_1(S;\rho)
  &=
  \left\langle
    \tilde{\mu}_{C}\otimes W,\tilde{\lambda}_{C}\otimes W
  \right\rangle,
  \\
  H_0(S;\rho)
  &=
  \left\langle
    \tilde{v}\otimes W
  \right\rangle,
  \\
  H_2(C;\rho)
  &=
  \left\langle
    i^{C}_{\ast}([S]\otimes W)
  \right\rangle,
  \\
  H_1(C;\rho)
  &=
  \left\langle
    \tilde{x}\otimes W
  \right\rangle,
  \\
  H_2(\pattern;\rho)
  &=
  \left\langle
    i^{\pattern}_{\ast}([S]\otimes W)
  \right\rangle,
  \\
  H_1(\pattern;\rho)
  &=
  \left\langle
    \tilde{p}\otimes W, \tilde{t}\otimes W
  \right\rangle,
  \\
  H_0(\pattern;\rho)
  &=
  \left\langle
    \tilde{v}\otimes W
  \right\rangle
\end{align*}
with $W:=\begin{pmatrix}2\\z^2-z^{-2}\\0\end{pmatrix}$.
So the exact sequence becomes
\begin{equation*}
\begin{CD}
  @>\delta_3>>\C
  @>\varphi_2>>\C\oplus\C
  @>\psi_2>>H_2(E;\rho)
  \\
  @>\delta_2>>\C^2
  @>\varphi_1>>\C\oplus\C^2
  @>\psi_1>>H_1(E;\rho)
  \\
  @>\delta_1>>\C
  @>\varphi_0>>\C
  @>\psi_0>>H_0(E;\rho).
\end{CD}
\end{equation*}
\par
We will study the homomorphisms $\varphi_k$ ($k=0,1,2$).
\begin{itemize}
\item
$\varphi_2$:
We know that $i^{C}_{\ast}$ sends $[S]\otimes W$ to $i^{C}_{\ast}([S]\otimes W)$.
Moreover from the calculation above, we see that $i^{\pattern}_{\ast}$ sends $[S]\otimes W$ to $i^{\pattern}_{\ast}([S]\otimes W)$.
Therefore $\varphi_2$ is injective.
\item
$\varphi_0$:
We can also see that $\varphi_0$ is the identity.
\item
$\varphi_1$:
Since $\mu_{C}=x$ and $\lambda_{C}=y(xy)^{2a}x^{-4a-1}$ from \S~\ref{sec:pi1}, we see that $i^{C}_{\ast}$ sends $\tilde{\mu}_{C}\otimes W$ to $\tilde{x}\otimes W\in H_1(C;\rho)$ and $\tilde{\lambda}_{C}\otimes W$ to
\begin{equation*}
\begin{split}
  &i^{C}_{\ast}(\tilde{\lambda}_{C}\otimes W)
  \\
  =&
  \left(\frac{\partial(y(xy)^{2a}x^{-4a-1})}{\partial\,x}\tilde{x}\right)\otimes W
  +
  \left(\frac{\partial(y(xy)^{2a}x^{-4a-1})}{\partial\,y}\tilde{y}\right)\otimes W
  \\
  =&
  \tilde{x}\otimes
  \left(
    \sum_{k=0}^{2a-1}(YX)^{k}Y
    -
    \left(\sum_{k=1}^{4a+1}X^{-k}\right)(YX)^{2a}Y
  \right)W
  +
  \tilde{y}\otimes
  \left(
    \sum_{k=0}^{2a}(XY)^{k}
  \right)W
  \\
  =&
  \frac{2a+1}{(\omega_1+1)^2}
  \begin{pmatrix}
    -2z^{-4}((2\omega_1^2+3\omega_1+2)z^4-\omega_1)
    \\
    z^{-6}(-(\omega_1^2+4\omega_1+1)z^8+(3\omega_1^2+2\omega_1+3)z^4-2\omega_1)
    \\
    -2z^{-8}(z^4+1)(z^4-\omega_1)(\omega_1z^4-1)
    \\
    2\omega_1(z^4+1)
    \\
    z^{-2}(z^4+1)(2\omega_1z^4-\omega_1^2-1)
    \\
    -2z^{-4}(z^4+1)(z^4-\omega_1)(\omega_1z^4-1)
  \end{pmatrix}
  \\
  =&
  -2(2a+1)\begin{pmatrix}2\\z^2-z^{-2}\\0\\0\\0\\0\end{pmatrix}
  +(z^2+z^{-2})\frac{D_2}{(\omega_1^2-1)^2}
  -2(z^4-z^{-4})\frac{D_3}{(\omega_1^2-1)^2}
  \\
  =&
  -2(2a+1)\tilde{x}\otimes W
  \in H_1(C;\rho),
\end{split}
\end{equation*}
where $\frac{D_2}{(\omega_1^2-1)^2}$ and $\frac{D_3}{(\omega_1^2-1)^2}$ are the second ant the third columns of $\partial^{C}_2$, respectively (see \eqref{eq:D2} and \eqref{eq:D3}).
Note that this is consistent with \eqref{eq:du/dv}.
\par
Since $\mu_{C}=ptpt^{-1}$ and $\lambda_{C}=t(ptpt^{-1})^{-b}$ in $\pi_1(\pattern)$ as before, $i^{\pattern}_{\ast}$ sends $\tilde{\mu}_{C}\otimes W$ to
\begin{equation*}
\begin{split}
  &i^{\pattern}_{\ast}(\tilde{\mu}_{C}\otimes W)
  \\
  =&
  \left(\frac{\partial(ptpt^{-1})}{\partial\,p}\tilde{p}\right)\otimes W
  +
  \left(\frac{\partial(ptpt^{-1})}{\partial\,t}\tilde{t}\right)\otimes W
  \\
  =&
  \tilde{p}\otimes(I_3+TP)W
  +
  \tilde{t}\otimes(P-T^{-1}PTP)W
  \\
  =&
  \begin{pmatrix}
    4\\2(z^2-z^{-2})\\0\\0\\0\\0
  \end{pmatrix}
  \\
  =&
  2\tilde{p}\otimes W
  \in
  H_1(\pattern;\rho),
\end{split}
\end{equation*}
and $\tilde{\lambda}_{C}\otimes W$ to
\begin{equation*}
\begin{split}
  &i^{\pattern}_{\ast}(\tilde{\lambda}_{C}\otimes W)
  \\
  =&
  \left(\frac{\partial(t(ptpt^{-1})^{-b})}{\partial\,p}\tilde{p}\right)\otimes W
  +
  \left(\frac{\partial(t(ptpt^{-1})^{-b})}{\partial\,t}\tilde{t}\right)\otimes W
  \\
  =&
  \tilde{p}\otimes
  \left(
    -
    \sum_{k=1}^{b}(P^{-1}T^{-1}P^{-1}T)^{k}T
    -
    \sum_{k=0}^{b-1}P^{-1}T(P^{-1}T^{-1}P^{-1}T)^{k}T
  \right)W
  \\
  &\quad
  +
  \tilde{t}\otimes
  \left(
    I_3
    +
    \sum_{k=0}^{b-1}(P^{-1}T^{-1}P^{-1}T)^{k}T
    -
    \sum_{k=0}^{b-1}T^{-1}P^{-1}T(P^{-1}T^{-1}P^{-1}T)^{k}T
  \right)W
  \\
  =&
  \begin{pmatrix}
    -4b\\-2b(z^2-z^{-2})\\0\\2\\z^2-z^{-2}\\0
  \end{pmatrix}
  \\
  =&
  -2b\tilde{p}\otimes W+\tilde{t}\otimes W
  \in H_1(\pattern;\rho).
\end{split}
\end{equation*}
\par
So $\varphi_1$ is presented by the matrix
\begin{equation*}
  \begin{pmatrix}
     1&-2(2a+1)       \\
     2&-2b\\
     0&1
  \end{pmatrix}
\end{equation*}
with respect to the bases mentioned above.
It is clear that $\varphi_1$ is injective.
\end{itemize}
Therefore we have $H_0(E;\rho)=\{0\}$, $H_2(E;\rho)=\C$, which is generated by $(j^{\pattern}_{\ast}\circ i^{\pattern}_{\ast})([S]\otimes W)$, and $H_1(E;\rho)=\C$, which is generated by $j^{\pattern}_{\ast}(\tilde{p})\otimes W$.
From Remark~\ref{rem:NA_Sigma}, $H_2(E;\rho)$ is also generated by $(j^{\pattern}_{\ast}\circ i^{\pattern}_{\ast})([\Sigma]\otimes W)$
\par
Now the bases for $\mathcal{H}$ are given as follows:
\begin{align*}
  \mathbf{c}^{\mathcal{H}}_8
  &=
  \left\langle
    [S]\otimes W
  \right\rangle,
  \\
  \mathbf{c}^{\mathcal{H}}_7
  &=
  \left\langle
    i^{C}_{\ast}([S]\otimes W),
    i^{\pattern}_{\ast}([S]\otimes W)
  \right\rangle,
  \\
  \mathbf{c}^{\mathcal{H}}_6
  &=
  \left\langle
    (j^{\pattern}_{\ast}\circ i^{\pattern}_{\ast})([S]\otimes W)
  \right\rangle,
  \\
  \mathbf{c}^{\mathcal{H}}_5
  &=
  \left\langle
    \tilde{\mu}_{C}\otimes W,
    \tilde{\lambda}_{C}\otimes W
  \right\rangle,
  \\
  \mathbf{c}^{\mathcal{H}}_4
  &=
  \left\langle
    \tilde{x}\otimes W,
    \tilde{p}\otimes W,
    \tilde{t}\otimes W
  \right\rangle,
  \\
  \mathbf{c}^{\mathcal{H}}_3
  &=
  \left\langle
    j^{\pattern}_{\ast}(\tilde{p}\otimes W)
  \right\rangle,
  \\
  \mathbf{c}^{\mathcal{H}}_2
  &=
  \left\langle
    \tilde{v}\otimes W
  \right\rangle,
  \\
  \mathbf{c}^{\mathcal{H}}_1
  &=
  \left\langle
    \tilde{v}\otimes W
  \right\rangle.
\end{align*}
\par
We choose $\mathbf{b}^{\mathcal{H}}_i$ as follows:
\begin{align*}
  \\
  \mathbf{b}^{\mathcal{H}}_8
  &=
  \mathbf{c}^{\mathcal{H}}_8,
  \\
  \mathbf{b}^{\mathcal{H}}_7
  &=
  \left\langle
    i^{\pattern}_{\ast}([S]\otimes W)
  \right\rangle,
  \\
  \mathbf{b}^{\mathcal{H}}_5
  &=
  \mathbf{c}^{\mathcal{H}}_5,
  \\
  \mathbf{b}^{\mathcal{H}}_4
  &=
  \left\langle
    \tilde{p}\otimes W
  \right\rangle,
  \\
  \mathbf{b}^{\mathcal{H}}_2
  &=
  \mathbf{c}^{\mathcal{H}}_2.
\end{align*}
Then the torsion of the Mayer-Vietoris sequence equals
\begin{equation*}
\begin{split}
  \Tor(\mathcal{H}_{\ast})
  &=
  \pm
  \frac{
  \left[
    \partial(\mathbf{b}^{\mathcal{H}}_8)
    \cup
    \mathbf{b}^{\mathcal{H}}_7
    \mid
    \mathbf{c}^{\mathcal{H}}_7
  \right]
  \left[
    \mathbf{b}^{\mathcal{H}}_5
    \mid
    \mathbf{c}^{\mathcal{H}}_5
  \right]
  \left[
    \partial(\mathbf{b}^{\mathcal{H}}_4)
    \mid
    \mathbf{c}^{\mathcal{H}}_3
  \right]
  \left[
    \partial(\mathbf{b}^{\mathcal{H}}_2)
    \mid
    \mathbf{c}^{\mathcal{H}}_1
  \right]
  }
  {
  \left[
    \mathbf{b}^{\mathcal{H}}_8
    \mid
    \mathbf{c}^{\mathcal{H}}_8
  \right]
  \left[
    \partial(\mathbf{b}^{\mathcal{H}}_7)
    \mid
    \mathbf{c}^{\mathcal{H}}_6
  \right]
  \left[
    \partial(\mathbf{b}^{\mathcal{H}}_5)
    \cup
    \mathbf{b}^{\mathcal{H}}_4
    \mid
    \mathbf{c}^{\mathcal{H}}_4
  \right]
  \left[
    \mathbf{b}^{\mathcal{H}}_2
    \mid
    \mathbf{c}^{\mathcal{H}}_2
  \right]
  }
  \\
  &=
  \pm
  \frac{1}
  {
  \left[
    \partial(\mathbf{b}^{\mathcal{H}}_5)
    \cup
    \mathbf{b}^{\mathcal{H}}_4
    \mid
    \mathbf{c}^{\mathcal{H}}_4
  \right]
  }
  \\
  &=
  \pm
  \frac{1}
  {
  \det
  \begin{pmatrix}
     1&-2(2a+1)&0 \\
     2&-2b     &1 \\
     0&1       &0
  \end{pmatrix}
  }
  \\
  &=
  \pm1.
\end{split}
\end{equation*}
\par
So we finally have
\begin{equation*}
  \Tor(E;\rho)
  =
  \pm
  \frac{2a+1}{2}
  \left(
    \frac{z^{8a-2b+3}+z^{-8a+2b-3}}{\omega_1-\omega_1^{-1}}
  \right)^2
\end{equation*}
from \eqref{eq:MV_torsion_NA}.
\begin{rem}
Since $\rho_{\pattern}$ is Abelian, $W$ is also invariant under the adjoint action of any element in $\pi_1(\pattern)$; especially it is invariant under the adjoint action of any element in $\pi_1(\Sigma)$.
\end{rem}
\subsection{Both $\Im\rho_{C}$ and $\Im\rho_{\pattern}$ are non-Abelian}
Let $\rho$ be the representation defined in Subsection~\ref{subsec:rep_NN}.
Then we have the following theorem.
\begin{thm}\label{thm:NN}
Let $\gamma$ be the element in $H_1(E;\rho)$ such that $\delta_1(\gamma)=\tilde{v}\otimes\tilde{U}\in H_0(E;\rho)$, where $\delta_1$ is the connecting homomorphism of the Mayer--Vietoris exact sequence associated with the triple $(C,D,C\cap D=S)$, $v$ is the basepoint in $S$, $\tilde{v}$ is its lift in the universal cover, and $\tilde{U}\in\sl(2;\C)$ is invariant under the adjoint action of any element in $H_1(S)$.
Then the Reidemeister torsion $\Tor(E;\rho)$ of $E$ twisted by the adjoint action of the representation $\rho$ associated with the meridian and $\gamma$ is given by
\begin{equation*}
  \Tor(E;\rho)
  =
  \pm
  \frac{(2a+1)(4(2a+1)-(2b+1))}{4(\omega_1-\omega_1^{-1})^2}.
\end{equation*}
\end{thm}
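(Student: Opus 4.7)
The plan is to adapt the Mayer--Vietoris strategy used in Subsections~\ref{subsec:AN} and \ref{subsec:NA}, applying formula \eqref{eq:MV_torsion}:
\begin{equation*}
  \Tor(E;\rho)
  =
  \frac{\Tor(C;\rho)\Tor(\pattern;\rho)}{\Tor(S;\rho)\Tor(\mathcal{H}_{\ast})}.
\end{equation*}
The essential new feature, compared with the two earlier cases, is that $\rho$ is non-Abelian on both pieces, so $H_{0}(C;\rho)=H_{0}(\pattern;\rho)=\{0\}$, while the boundary tori of $\pattern$ contribute two independent invariant directions $\tilde{U}$ (invariant under $\pi_{1}(S)$) and $\tilde{V}$ (invariant under $\pi_{1}(\Sigma)$). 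As a result $H_{1}(E;\rho)$ and $H_{2}(E;\rho)$ are both two-dimensional, which forces the auxiliary class $\gamma$ to appear in the basis.

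First I would compute the three individual torsions. The torsion $\Tor(S;\rho)=\pm1$ follows exactly as in Subsection~\ref{subsec:torus}, with the meridional eigenvalue $\zeta=\omega_{3}$ and the vector $H$ replaced by the $\pi_{1}(S)$-invariant vector $\tilde{U}$ obtained by conjugating $\begin{pmatrix}2\\(\omega_{3}-\omega_{3}^{-1})z\\0\end{pmatrix}$ by $\tilde\theta_{1}$. The torsion $\Tor(C;\rho)$ is computed by the same Fox-derivative argument as in Subsubsection~\ref{subsec:NA} (torsion of $C$): the restricted representation on $\pi_{1}(C)=\langle x,y\mid(xy)^{a}x=y(xy)^{a}\rangle$ is the standard non-Abelian family parametrized by $\omega_{1}$, the only difference being that the meridional eigenvalue is $\omega_{3}^{\pm1}$ instead of $z^{\pm2}$; since the NA formula $\pm(2a+1)/\bigl(2(\omega_{1}-\omega_{1}^{-1})^{2}\bigr)$ is independent of the meridional eigenvalue, the same value is obtained here. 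The torsion $\Tor(\pattern;\rho)$ is then computed by the method of Subsubsection~\ref{subsubsec:AN_D} applied to the presentation $\langle p,t\mid ptpt=tptp\rangle$; since $\rho(t)=\tilde\theta_{1}^{-1}\diag(\omega_{3}^{4a-b+1},\omega_{3}^{-4a+b-1})\tilde\theta_{1}$ up to the off-diagonal terms computed in Subsection~\ref{subsec:rep_NN}, the calculation is parallel to the AN case with $\omega_{2}$ replaced by $\omega_{3}$, and the rank-two character of $\rho_{\pattern}$ produces $H_{2}(\pattern;\rho)=\C^{2}$ spanned by $i^{\pattern}_{\ast}([S]\otimes\tilde{U})$ and $i^{\pattern}_{\ast}([\Sigma]\otimes\tilde{V})$, and $H_{1}(\pattern;\rho)=\C^{2}$ spanned by $\tilde{p}\otimes\tilde{V}$ and $\tilde{t}\otimes\tilde{U}$.

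Next I would write out the Mayer--Vietoris sequence with the explicit bases, determine the maps $\varphi_{k}=i^{C}_{\ast}\oplus i^{\pattern}_{\ast}$ via Fox calculus on the identifications $\mu_{C}=x=ptpt^{-1}$ and $\lambda_{C}=y(xy)^{2a}x^{-4a-1}=t(ptpt^{-1})^{-b}$, and read off $H_{i}(E;\rho)$ from exactness. Since $\varphi_{0}=0$, the connecting map $\delta_{1}\colon H_{1}(E;\rho)\to H_{0}(S;\rho)=\C$ is surjective; the class $\gamma$ in the theorem is defined as any preimage of $\tilde{v}\otimes\tilde{U}$. With the basis $\{[S]\otimes\tilde U,[\Sigma]\otimes\tilde V\}$ for $H_{2}(E;\rho)$ and $\{\mu\otimes\tilde V,\gamma\}$ for $H_{1}(E;\rho)$, the torsion $\Tor(\mathcal{H}_{\ast})$ reduces, as in Subsections~\ref{subsec:AN}--\ref{subsec:NA}, to a single $3\times3$ determinant coming from the block $\partial(\mathbf{b}_{5}^{\mathcal{H}})\cup\mathbf{b}_{4}^{\mathcal{H}}\mid\mathbf{c}_{4}^{\mathcal{H}}$.

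The main obstacle is the bookkeeping in this last step: I must identify $\varphi_{1}$ on the two generators $\tilde\mu_{C}\otimes\tilde U$ and $\tilde\lambda_{C}\otimes\tilde U$ as elements of $\C\oplus\C^{2}$ (not $\C\oplus\C^{2}$ spanned only by $\tilde U$-type classes as before, since $H_{1}(\pattern;\rho)$ also contains the $\tilde V$-class), and verify that its $3\times2$ matrix has a $2\times2$ minor equal, up to sign, to $4(2a+1)-(2b+1)$; the combinatorial exponent $-8a+2b-4$ appearing in $\rho(t)$ from Subsection~\ref{subsec:rep_NN}, together with the relation $\omega_{3}^{2b-8a-3}=-1$, is what ultimately produces this integer. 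Combining $\Tor(C;\rho)=\pm(2a+1)/\bigl(2(\omega_{1}-\omega_{1}^{-1})^{2}\bigr)$, the expected $\Tor(\pattern;\rho)=\pm\tfrac12$, $\Tor(S;\rho)=\pm1$, and $\Tor(\mathcal{H}_{\ast})=\pm1/\bigl(4(2a+1)-(2b+1)\bigr)$ yields the claimed formula.
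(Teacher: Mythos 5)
Your proposal follows essentially the same route as the paper: the Mayer--Vietoris formula \eqref{eq:MV_torsion} with $\Tor(S;\rho)=\pm1$, $\Tor(C;\rho)=\pm(2a+1)/\bigl(2(\omega_1-\omega_1^{-1})^2\bigr)$ carried over from the NA computation (which indeed does not involve the meridional eigenvalue), $\Tor(\pattern;\rho)=\pm\tfrac12$ carried over from the AN computation with $\omega_2$ replaced by $\omega_3$, and $\Tor(\mathcal{H}_{\ast})=\pm1/\bigl(4(2a+1)-(2b+1)\bigr)$ coming from the $3\times3$ block built from $\varphi_1$ (entries $1,-2(2a+1)$ from the $C$-side and $-2,2b+1$ from the $\pattern$-side) together with $\mathbf{b}^{\mathcal{H}}_4=\langle\tilde{p}\otimes\tilde{V}\rangle$, with $\gamma$ entering because $\delta_1$ is onto $H_0(S;\rho)$. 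This is exactly the paper's argument and the numerics check out.
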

\begin{cor}
Let $\rho^{\rm{NN}}_{\xi,l,m}$ be the representation $\rho$ with $z=\exp(\xi/2)$ and $\omega_1=\exp\left(\frac{(2l+1)\pi\sqrt{-1}}{2a+1}\right)$ and $\omega_3=\exp\left(\frac{(2m+1)\pi\sqrt{-1}}{2b+1-4(2a+1)}\right)$.
Then we have
\begin{equation*}
  \Tor(E;\rho^{\rm{NN}}_{\xi,l,m})
  =
  \pm
  \frac{(2a+1)(2b+1-4(2a+1))}{16\sin^2\left(\frac{(2l+1)\pi}{2a+1}\right)}
  =
  \pm\tau_2(\xi;k)^{-2}.
\end{equation*}
\end{cor}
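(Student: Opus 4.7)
The plan is to mimic the Mayer--Vietoris strategy of Subsections~\ref{subsec:AN} and \ref{subsec:NA}, exploiting the decomposition $E=C\cup_{S}\pattern$ via formula~\eqref{eq:MV_torsion}. First I would compute each of $\Tor(S;\rho)$, $\Tor(C;\rho_{C})$, $\Tor(\pattern;\rho_{\pattern})$ with respect to geometric bases determined by $\sl(2;\C)$-vectors invariant under the appropriate adjoint actions, and then assemble the torsion of the Mayer--Vietoris sequence $\Tor(\mathcal{H}_{\ast})$. Since $\rho(\mu_{C})=\rho(pq)$ is conjugate to the diagonal matrix with entries $\omega_{3}^{\pm1}\ne\pm1$, the formula of Subsection~\ref{subsec:torus} applies, giving $\Tor(S;\rho)=\pm1$ with $H_{\ast}(S;\rho)$ generated by classes of the form $(\cdot)\otimes\tilde{U}$, where $\tilde{U}\in\sl(2;\C)$ is the (unique up to scalar) vector fixed by $\Ad{\rho(\pi_{1}(S))}$.

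Next, $\Tor(C;\rho_{C})$ is computed as in Subsection~\ref{subsec:NA}: the restriction $\rho_{C}$ is a non-Abelian torus-knot representation, so after adjusting Dubois' cohomological formula \cite[6.2]{Dubois:CANMB2006} by the derivative $\partial u/\partial v$ associated with $\rho(\mu_{C})$ having eigenvalue $\omega_{3}$ and $\rho(\lambda_{C})$ having eigenvalue $-\omega_{3}^{-4a-2}$, one obtains $\Tor(C;\rho_{C})=\pm\dfrac{2a+1}{2(\omega_{1}-\omega_{1}^{-1})^{2}}$, with $H_{2}(C;\rho_{C})$ generated by $i^{C}_{\ast}([S]\otimes\tilde{U})$ and $H_{1}(C;\rho_{C})$ by $\tilde{x}\otimes\tilde{U}$. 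For $\Tor(\pattern;\rho_{\pattern})$ I would repeat the cellular computation of Subsubsection~\ref{subsubsec:AN_D}, but now with the $\omega_{2}$-matrices of that subsubsection replaced by the analogous $\omega_{3}$-matrices from Subsection~\ref{subsec:rep_NN}, with the crucial change that $\rho(\lambda_{C})$ is no longer $I_{2}$. The expected output is a constant times $(2b+1-4(2a+1))^{-1}$ or similar, since the new eigenvalue of $\rho(t)$ is $\omega_{3}^{\pm(4a-b+1)}$ and $\omega_{3}^{2b+1-4(2a+1)}=-1$; the vectors $\tilde{U}$ (fixed by $\Ad{\rho(\pi_{1}(S))}$) and $\tilde{V}$ (fixed by $\Ad{\rho(\pi_{1}(\Sigma))}$) furnish the generators of $H_{2}(\pattern;\rho_{\pattern})\cong\C^{2}$ via $i^{\pattern}_{\ast}([S]\otimes\tilde{U})$ and $i^{\pattern}_{\ast}([\Sigma]\otimes\tilde{V})$, and of $H_{1}(\pattern;\rho_{\pattern})\cong\C^{2}$ via $\tilde{p}\otimes\tilde{V}$ and $\tilde{t}\otimes\tilde{U}$.

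For the Mayer--Vietoris torsion, the sequence~\eqref{eq:MV_exact} becomes
\begin{equation*}
\begin{CD}
0@>>>\C@>\varphi_{2}>>\C\oplus\C^{2}@>\psi_{2}>>H_{2}(E;\rho)@>\delta_{2}>>\C^{2}@>\varphi_{1}>>\C\oplus\C^{2}@>\psi_{1}>>H_{1}(E;\rho)@>\delta_{1}>>\C@>>>0,
\end{CD}
\end{equation*}
and I would compute each $\varphi_{k}$ via Fox calculus applied to the identifications $\mu_{C}=x=ptpt^{-1}$ and $\lambda_{C}=y(xy)^{2a}x^{-4a-1}=t(ptpt^{-1})^{-b}$. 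As in Subsection~\ref{subsec:AN}, the image $i^{C}_{\ast}(\tilde{\lambda}_{C}\otimes\tilde{U})$ will vanish (since $\lambda_{C}$ is null-homologous in $C$ and $\tilde{U}$ is fixed), and the computation of $i^{\pattern}_{\ast}(\tilde{\mu}_{C}\otimes\tilde{U})$ and $i^{\pattern}_{\ast}(\tilde{\lambda}_{C}\otimes\tilde{U})$ will produce a combination of the basis element $\tilde{t}\otimes\tilde{U}$ with a coefficient proportional to $(2b+1-4(2a+1))$, reflecting the order of $\omega_{3}$. The element $\gamma$ is supplied by the fact that $\delta_{1}$ must hit the generator $\tilde{v}\otimes\tilde{U}$ of $H_{0}(S;\rho)$, since $H_{0}(C;\rho)=H_{0}(\pattern;\rho)=\{0\}$. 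Once $\Tor(\mathcal{H}_{\ast})$ is expressed as a determinant of an explicit $3\times3$ matrix built from these $\varphi_{k}$'s, the identity
\begin{equation*}
\Tor(E;\rho)=\frac{\Tor(C;\rho_{C})\Tor(\pattern;\rho_{\pattern})}{\Tor(S;\rho)\Tor(\mathcal{H}_{\ast})}
\end{equation*}
yields the claimed formula.

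The main obstacle is the coherent choice of bases across all four complexes: unlike the AN and NA cases, both restrictions are now non-Abelian, so $\Ad{\rho_{C}}$ and $\Ad{\rho_{\pattern}}$ have \emph{different} invariant lines $\tilde{U}$ and $\tilde{V}$, and one must track how the inclusion-induced maps mix these. In particular, extracting the factor $4(2a+1)-(2b+1)$ from the Mayer--Vietoris determinant and matching it against the $\omega_{3}$-dependent prefactor of $\Tor(\pattern;\rho_{\pattern})$ requires delicate bookkeeping via the $\tilde{\theta}_{1}$-conjugations of Subsection~\ref{subsec:rep_NN}. I expect Mathematica, as in the earlier subsections, to be essential for verifying that all the intermediate determinants cancel as they should.
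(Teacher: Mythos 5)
Your overall strategy is exactly the paper's: decompose $E=C\cup_{S}\pattern$, compute the four torsions, and assemble via \eqref{eq:MV_torsion}. But there is a concrete error in the step where you compute $\varphi_{1}$, and it would change the final constant. You assert that $i^{C}_{\ast}(\tilde{\lambda}_{C}\otimes\tilde{U})$ vanishes ``since $\lambda_{C}$ is null-homologous in $C$ and $\tilde{U}$ is fixed.'' That reasoning is only valid in the AN case, where $\rho_{C}$ is Abelian and the Fox-calculus expression collapses because $X=Y$ and $XU=U$. In the NN case $\rho_{C}$ is non-Abelian ($X\neq Y$), and the computation — identical to the NA case after replacing $z^{2}$ by $\omega_{3}$ and conjugating by $\theta_{3}$ — gives $i^{C}_{\ast}(\tilde{\lambda}_{C}\otimes\tilde{U})=-2(2a+1)\,\tilde{x}\otimes\tilde{U}$, consistent with the cusp-shape derivative $\partial u/\partial v=-1/(4a+2)$ of \eqref{eq:du/dv}. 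This coefficient is essential: $\varphi_{1}$ is represented by
$\begin{pmatrix}1&-2(2a+1)\\0&0\\-2&2b+1\end{pmatrix}$,
and the Mayer--Vietoris determinant $\bigl[\partial(\mathbf{b}^{\mathcal{H}}_{5})\cup\mathbf{b}^{\mathcal{H}}_{4}\mid\mathbf{c}^{\mathcal{H}}_{4}\bigr]$ equals $\pm\bigl((2b+1)-4(2a+1)\bigr)$ precisely because of the $-2(2a+1)$ entry. With your vanishing claim the determinant would be $\pm(2b+1)$ and you would obtain $\pm\frac{(2a+1)(2b+1)}{16\sin^{2}(\cdots)}$, which is not $\tau_{3}^{-2}$.

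A related misattribution: you expect $\Tor(\pattern;\rho)$ to carry a factor $(2b+1-4(2a+1))^{-1}$. It does not; since the adjoint actions of $\rho(p)$ and $\rho(t)$ are obtained from those of Subsubsection~\ref{subsubsec:AN_D} by substituting $\omega_{3}$ for $\omega_{2}$ (using $\omega_{3}^{2b+1-4(2a+1)}=-1$ in place of $\omega_{2}^{2b+1}=-1$), one gets $\Tor(\pattern;\rho)=\pm\tfrac12$, exactly as in the AN case. The entire factor $(2b+1)-4(2a+1)$ comes from $\Tor(\mathcal{H}_{\ast})$, and only through the interaction of the $C$-side coefficient $-2(2a+1)$ with the $\pattern$-side coefficients $-2$ and $2b+1$ — not from any single coefficient being ``proportional to $2b+1-4(2a+1)$.'' Once these two points are corrected, the rest of your outline (the role of $\gamma$ with $\delta_{1}(\gamma)=\tilde{v}\otimes\tilde{U}$, the generators $\tilde{U},\tilde{V}$, and the final substitution $(\omega_{1}-\omega_{1}^{-1})^{2}=-4\sin^{2}\bigl(\tfrac{(2l+1)\pi}{2a+1}\bigr)$) matches the paper's proof.
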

The rest of this subsection will be devoted to the proof of Theorem~\ref{thm:NN}
\subsubsection{Reidemeister torsion of $S$}
As before, we regard $S$ as a cell complex with one $0$-cell $v$ (the basepoint), two $1$-cells $\mu_{C}$ and $\lambda_{C}$, and one $2$-cell $f$.
\par
From \S\ref{subsec:rep_NN}, we have
\begin{align*}
  \rho(\mu_{C})
  &=
  \rho(x)
  =
  \theta_3^{-1}
  \begin{pmatrix}
    \omega_3&1\\
    0       &\omega_3^{-1}
  \end{pmatrix}
  \theta_3
  \\
  \rho(\lambda_{C})
  &=
  \theta_3^{-1}
  \begin{pmatrix}
    -\omega_3^{-4a-2}&\frac{\omega_3^{4a+2}-\omega_3^{-4a-2}}{\omega_3-\omega_3^{-1}} \\
    0                &-\omega_3^{4a+2}
  \end{pmatrix}
  \theta_3^{-1}.
\end{align*}
\par
Since these are obtained from those in Subsection~\ref{subsec:rep_NA} by replacing $z^2$ with $\omega_3$ and taking conjugation by $\theta_3$, the Reidemeister torsion is
\begin{equation*}
  \Tor(S;\rho)
  =
  \pm1.
\end{equation*}
\par
We give generators of the twisted homologies.
From the calculation in Subsection~\ref{subsec:NA}, we see that
\begin{itemize}
\item
$H_2(S;\rho)=\C$ is generated by $[S]\otimes\tilde{W}$,
\item
$H_1(S;\rho)=\C^2$ is generated by $\{\tilde{\mu}_C\otimes\tilde{W},\tilde{\lambda}_C\otimes\tilde{W}\}$, and
\item
$H_0(S;\rho)=\C$ is generated by $\tilde{v}\otimes\tilde{W}$.
\end{itemize}
where  $\tilde{W}:=\Theta_3\begin{pmatrix}2\\\omega_3-\omega_3^{-1}\\0\end{pmatrix}$ with
\begin{equation*}
  \Theta_3
  :=
  \begin{pmatrix}
    z^{-1}                              &0                      &0 \\
    \omega_3^{-1}-z^{-2}                &                       1&0 \\
    -\omega_3^{-2}z^{-3}(z^2-\omega_3)^2&2(z^{-1}-\omega_3^{-1}z)&z
  \end{pmatrix}.
\end{equation*}
Since
$\Theta_3=\tilde{\Theta}_1\begin{pmatrix}
    z^{-1}&0&0\\
    0     &1&0\\
    0     &0&z
  \end{pmatrix}$,
we have
\begin{equation*}
  \tilde{W}
  =
  \tilde{\Theta}_1
  \begin{pmatrix}
    2z^{-1}\\\omega_3-\omega_3^{-1}\\0
  \end{pmatrix}
  =
  z^{-1}\tilde{U}
\end{equation*}
with
\begin{equation*}
  \tilde{U}
  :=
  \tilde{\Theta}_1
  \begin{pmatrix}
    2\\(\omega_3-\omega_3^{-1})z\\0
  \end{pmatrix}
\end{equation*}
and
\begin{equation*}
  \tilde{\Theta}_1
  :=
  \begin{pmatrix}
    1                         &0                      &0\\
    -z^{-1}+\omega_3^{-1}z    &1                      &0 \\
    -(\omega_3^{-1}z-z^{-1})^2&2z^{-1}-2\omega_3^{-1}z&1
  \end{pmatrix}.
\end{equation*}
Note that $\tilde{U}$ ($\tilde{\Theta}_1$, respectively) is obtained from $U$ ($\Theta_1$, respectively) by replacing $\omega_2$ with $\omega_3$.
\par
So we can choose generators as follows without changing the Reidemeister torsion:
\begin{itemize}
\item
$H_2(S;\rho)=\C$ is generated by $[S]\otimes\tilde{U}$,
\item
$H_1(S;\rho)=\C^2$ is generated by $\{\tilde{\mu}_C\otimes\tilde{U},\tilde{\lambda}_C\otimes\tilde{U}\}$, and
\item
$H_0(S;\rho)=\C$ is generated by $\tilde{v}\otimes\tilde{U}$.
\end{itemize}
\subsubsection{Reidemeister torsion of $C$}
The matrices $\rho(x)$ and $\rho(y)$ are given from those in Subsection~\ref{subsec:rep_NA} by replacing $z^2$ with $\omega_3$ and taking conjugation by $\theta_3$.
Since the Reidemeister torsion of $C$ in Subsection~\ref{subsec:NA} does not involve $z$, the Reidemeister torsion in this case is the same and we have
\begin{equation*}
  \Tor(C;\rho)
  =
  \pm\frac{2a+1}{2(\omega_1-\omega_1^{-1})^2}.
\end{equation*}
Generators of the twisted homologies are given as follows:
\begin{itemize}
\item
$H_2(C;\rho)=\C$ is generated by $i^{C}_{\ast}([S]\otimes\tilde{U})$, and
\item
$H_1(C;\rho)=\C$ is generated by $\tilde{x}\otimes\tilde{U}$.
\end{itemize}
\subsubsection{Reidemeister torsion of $\pattern$}
The matrix $\rho(p)$ is the same as that in Subsection~\ref{subsec:AN}.
The adjoint action of $\rho(t)$ is
\begin{equation*}
\begin{split}
  &\tilde{\Theta}_1
  \begin{pmatrix}
    \omega_3^{-8a+2b-2}&\frac{2\omega_3(1-\omega_3^{-8a+2b-2})}{(\omega_3^{2}-1)z}
      &-\frac{\omega_3^{-8a+2b}(\omega_3^{8a-2b+2}-1)^2}{(\omega_3^2-1)^2z^2}\\
    0&1&-\frac{\omega_3(\omega_3^{8a-2b+2}-1)}{(\omega_2^2-1)z} \\
    0&0&\omega_2^{8a-2b+2}
  \end{pmatrix}
  \tilde{\Theta}_1^{-1}
  \\
  =&
  \tilde{\Theta}_1
  \begin{pmatrix}
    -\omega_3&\frac{2\omega_3}{(\omega_3-1)z}&\frac{\omega_3}{(\omega_3-1)^2z^2} \\
    0        &1                            &\frac{1}{(\omega_3-1)z} \\
    0        &0                            &-\omega_3^{-1}
  \end{pmatrix}
  \tilde{\Theta}_1^{-1}
\end{split}
\end{equation*}
since $\omega_3^{8a-2b+3}=-1$.
Therefore the adjoint actions of $\rho(p)$ and $\rho(t)$ are obtained from those in Subsection~\ref{subsec:AN} by replacing $\omega_2$ with $\omega_3$, and so we have
\begin{equation*}
  \Tor(\pattern;\rho)
  =
  \pm
  \frac{1}{2}.
\end{equation*}
Generators of the twisted homologies are given as follows:
\begin{itemize}
\item
$H_2(\pattern;\rho)=\C^2$ is generated by $\tilde{f}\otimes\tilde{U}$ and $\tilde{f}\otimes\tilde{V}$, and
\item
$H_1(\pattern;\rho)=\C^2$ is generated by $\tilde{p}\otimes\tilde{V}$ and $\tilde{t}\otimes\tilde{U}$,
\end{itemize}
where
$\tilde{U}:=\tilde{\Theta}_1\begin{pmatrix}2\\(\omega_3-\omega_3^{-1})z\\0\end{pmatrix}$
as before and
$\tilde{V}:=\tilde{\Theta}_1
  \begin{pmatrix}
    2 \\
    z+z^{-1}-2\omega_3^{-1}z \\
    2\omega_3^{-2}(\omega_3-1)(z^2-\omega_3)
  \end{pmatrix}
  =
  \begin{pmatrix}
    2 \\
    z-z^{-1} \\
    0
  \end{pmatrix}$,
which are obtained from $U$ and $V$ by replacing $\omega_2$ with $\omega_3$, respectively.
\par
Note that we have $\tilde{f}\otimes\tilde{U}=i^{\pattern}_{\ast}([S]\otimes\tilde{U})$ and $\tilde{f}\otimes\tilde{V}=i^{\pattern}_{\ast}([\Sigma]\otimes\tilde{V})$.
\subsubsection{Reidemeister torsion of the Mayer--Vietoris sequence}
From \eqref{eq:MV_torsion2} we have
\begin{equation}\label{eq:MV_torsion_NN}
  \Tor(E;\rho)
  =
  \frac{\Tor(C;\rho)\Tor(\pattern;\rho)}{\Tor(\mathcal{H}_{\ast})}
  =
  \pm
  \frac{2a+1}{4(\omega_1-\omega_1^{-1})^2\Tor(\mathcal{H}_{\ast})},
\end{equation}
where $\mathcal{H}_{\ast}$ is the Mayer--Vietoris exact sequence \eqref{eq:MV_exact}.
\par
We calculate the torsion of the Mayer--Vietoris exact sequence with respect to the bases that we have calculated:
\begin{align*}
  H_2(S;\rho)
  &=
  \left\langle
    [S]\otimes\tilde{U}
  \right\rangle,
  \\
  H_1(S;\rho)
  &=
  \left\langle
    \tilde{\mu}_{C}\otimes\tilde{U},\tilde{\lambda}_{C}\otimes\tilde{U}
  \right\rangle,
  \\
  H_0(S;\rho)
  &=
  \left\langle
    \tilde{v}\otimes\tilde{U}
  \right\rangle,
  \\
  H_2(C;\rho)
  &=
  \left\langle
    i^{C}_{\ast}([S]\otimes\tilde{U})
  \right\rangle,
  \\
  H_1(C;\rho)
  &=
  \left\langle
    \tilde{x}\otimes\tilde{U}
  \right\rangle,
  \\
  H_2(\pattern;\rho)
  &=
  \left\langle
    i^{\pattern}_{\ast}([S]\otimes\tilde{U}), i^{\pattern}_{\ast}([\Sigma]\otimes\tilde{V})
  \right\rangle,
  \\
  H_1(\pattern;\rho)
  &=
  \left\langle
    \tilde{p}\otimes\tilde{V}, \tilde{t}\otimes\tilde{U}
  \right\rangle.
\end{align*}
So we see that $H_0(E;\rho)=\{0\}$ and have the following exact sequence.
\begin{equation*}
\begin{CD}
  @>\delta_3>>\C
  @>\varphi_2>>\C\oplus\C^2
  @>\psi_2>>H_2(E;\rho)
  \\
  @>\delta_2>>\C^2
  @>\varphi_1>>\C\oplus\C^2
  @>\psi_1>>H_1(E;\rho)
  \\
  @>\delta_1>>\C
  @>>>\{0\}.
\end{CD}
\end{equation*}
\par
We will study the homomorphisms $\varphi_k$ ($k=1,2$).
\begin{itemize}
\item
$\varphi_2$:
As in the calculation in Subsection~\ref{subsec:AN}, $i^{\pattern}_{\ast}$ sends $[S]\otimes\tilde{U})$ to $i^{\pattern}_{\ast}([S]\otimes\tilde{U})$.
Moreover as in the calculation in Subsection~\ref{subsec:NA}, $i^{C}_{\ast}$ sends $[S]\otimes\tilde{U}$ to $i^{C}_{\ast}([S]\otimes\tilde{U})$.
Therefore $\varphi_2$ is injective.
\item
$\varphi_1$:
By calculation similar to that in Subsection~\ref{subsec:NA}, we see that $i^{C}_{\ast}$ sends $\tilde{\mu}_{C}\otimes\tilde{U}$ to $\tilde{x}\otimes\tilde{U}\in H_1(C;\rho)$ and $\tilde{\lambda}_{C}\otimes\tilde{U}$ to $-2(2a+1)\tilde{x}\otimes\tilde{U}\in H_1(C;\rho)$.
\par
By calculation similar to that in Subsection~\ref{subsec:AN}, we see that $i^{\pattern}_{\ast}$ sends $\tilde{\mu}_{C}\otimes\tilde{U}$ to $-2\tilde{t}\otimes\tilde{U}\in H_1(\pattern;\rho)$, and $\tilde{\lambda}_{C}\otimes\tilde{U}$ to $(2b+1)\tilde{t}\otimes\tilde{U}\in H_1(\pattern;\rho)$.
\par
So $\varphi_1$ is presented by the matrix
\begin{equation*}
  \begin{pmatrix}
     1&-2(2a+1) \\
     0&0  \\
    -2&2b+1
  \end{pmatrix}
\end{equation*}
with respect to the bases mentioned above.
It is clear that $\varphi_1$ is injective.
\end{itemize}
Therefore we have $H_0(E;\rho)=\{0\}$, $H_2(E;\rho)=\C^2$, which is generated by $\{(j^{\pattern}_{\ast}\circ i^{\pattern}_{\ast}([S]\otimes\tilde{U}),(j^{\pattern}_{\ast}\circ i^{\pattern}_{\ast})([\Sigma]\otimes\tilde{V})\}$, and $H_1(E;\rho)=\C^2$, which is generated by $\{j^{\pattern}_{\ast}(\tilde{p}\otimes\tilde{V}),\gamma\}$, where we choose $\gamma$ so that $\delta_1(\gamma)=\tilde{v}\otimes\tilde{U}$.
\par
Now the bases for $\mathcal{H}$ are given as follows:
\begin{align*}
  \mathbf{c}^{\mathcal{H}}_8
  &=
  \left\langle
    [S]\otimes U
  \right\rangle,
  \\
  \mathbf{c}^{\mathcal{H}}_7
  &=
  \left\langle
    i^{C}_{\ast}([S]\otimes\tilde{U}),
    i^{\pattern}_{\ast}([S]\otimes\tilde{U}),
    i^{\pattern}_{\ast}([\Sigma]\otimes\tilde{V})
  \right\rangle,
  \\
  \mathbf{c}^{\mathcal{H}}_6
  &=
  \left\langle
    (j^{\pattern}_{\ast}\ast i^{\pattern}_{\ast})([S]\otimes\tilde{U}),
    (j^{\pattern}_{\ast}\ast i^{\pattern}_{\ast})([\Sigma]\otimes\tilde{V})
  \right\rangle,
  \\
  \mathbf{c}^{\mathcal{H}}_5
  &=
  \left\langle
    \tilde{\mu}_{C}\otimes\tilde{U},
    \tilde{\lambda}_{C}\otimes\tilde{U}
  \right\rangle,
  \\
  \mathbf{c}^{\mathcal{H}}_4
  &=
  \left\langle
    \tilde{x}\otimes\tilde{U},
    \tilde{p}\otimes\tilde{V},
    \tilde{t}\otimes\tilde{U}
  \right\rangle,
  \\
  \mathbf{c}^{\mathcal{H}}_3
  &=
  \left\langle
    j^{\pattern}_{\ast}(\tilde{p}\otimes\tilde{V}),
    \gamma
  \right\rangle,
  \\
  \mathbf{c}^{\mathcal{H}}_2
  &=
  \left\langle
    \tilde{v}\otimes\tilde{U}
  \right\rangle.
\end{align*}
\par
We choose $\mathbf{b}^{\mathcal{H}}_i$ as follows:
\begin{align*}
  \\
  \mathbf{b}^{\mathcal{H}}_8
  &=
  \mathbf{c}^{\mathcal{H}}_8,
  \\
  \mathbf{b}^{\mathcal{H}}_7
  &=
  \left\langle
    i^{\pattern}_{\ast}([S]\otimes\tilde{U}),
    i^{\pattern}_{\ast}([\Sigma]\otimes\tilde{V})
  \right\rangle,
  \\
  \mathbf{b}^{\mathcal{H}}_5
  &=
  \mathbf{c}^{\mathcal{H}}_5,
  \\
  \mathbf{b}^{\mathcal{H}}_4
  &=
  \left\langle
    \tilde{p}\otimes\tilde{V}
  \right\rangle,
  \\
  \mathbf{b}^{\mathcal{H}}_3
  &=
  \left\langle
    \gamma
  \right\rangle.
\end{align*}
Then the torsion of the Mayer-Vietoris sequence equals
\begin{equation*}
\begin{split}
  \Tor(\mathcal{H}_{\ast})
  &=
  \pm
  \frac{
  \left[
    \partial(\mathbf{b}^{\mathcal{H}}_8)
    \cup
    \mathbf{b}^{\mathcal{H}}_7
    \mid
    \mathbf{c}^{\mathcal{H}}_7
  \right]
  \left[
    \mathbf{b}^{\mathcal{H}}_5
    \mid
    \mathbf{c}^{\mathcal{H}}_5
  \right]
  \left[
    \partial(\mathbf{b}^{\mathcal{H}}_4)
    \cup
    \mathbf{b}^{\mathcal{H}}_3
    \mid
    \mathbf{c}^{\mathcal{H}}_3
  \right]
  }
  {
  \left[
    \mathbf{b}^{\mathcal{H}}_8
    \mid
    \mathbf{c}^{\mathcal{H}}_8
  \right]
  \left[
    \partial(\mathbf{b}^{\mathcal{H}}_7)
    \mid
    \mathbf{c}^{\mathcal{H}}_6
  \right]
  \left[
    \partial(\mathbf{b}^{\mathcal{H}}_5)
    \cup
    \mathbf{b}^{\mathcal{H}}_4
    \mid
    \mathbf{c}^{\mathcal{H}}_4
  \right]
  \left[
    \partial(\mathbf{b}^{\mathcal{H}}_3)
    \mid
    \mathbf{c}^{\mathcal{H}}_2
  \right]
  }
  \\
  &=
  \pm
  \frac{1}
  {
  \left[
    \partial(\mathbf{b}^{\mathcal{H}}_5)
    \cup
    \mathbf{b}^{\mathcal{H}}_4
    \mid
    \mathbf{c}^{\mathcal{H}}_4
  \right]
  }
  \\
  &=
  \pm
  \frac{1}
  {
  \det
  \begin{pmatrix}
     1 &-2(2a+1)&0 \\
     0 &0       &1 \\
     -2&2b+1    &0
  \end{pmatrix}
  }
  \\
  &=
  \pm\frac{1}{(2b+1)-4(2a+1)}.
\end{split}
\end{equation*}
\par
So we finally have
\begin{equation*}
  \Tor(E;\rho)
  =
  \pm
  \frac{(2a+1)(4(2a+1)-(2b+1))}{4(\omega_1-\omega_1^{-1})^2}
\end{equation*}
from \eqref{eq:MV_torsion_NN}.
\begin{rem}
In \cite[3.6.3]{Murakami:ACTMV2014} the author calculated the twisted Alexander polynomial and tried in vain to obtain the twisted Reidemeister torsion.
\end{rem}

\bibliography{mrabbrev,hitoshi}
\bibliographystyle{amsplain}
\end{document}